\newcommand{\spec}{{\mbox{\rm\texttt{Spec}}}}
\newcommand{\Zar}{{\mbox{\rm\texttt{Zar}}}}
\newcommand{\zar}{{\mbox{\rm\texttt{Zar}}}}
\newcommand{\Kr}{{\mbox{\rm\texttt{Kr}}}}
\newcommand{\kr}{{\mbox{\rm\texttt{Kr}}}}
\newcommand{\qspec}{{\mbox{\rm\texttt{QSpec}}}}
\newcommand{\ms}{\mathscr}
\newcommand{\z}{{\ldots}}
\newcommand{\inssubmod}{\boldsymbol{\overline{F}}}
\newcommand{\smod}{\mathtt{SMod}}
\newcommand{\insfineab}{\mbox{\texttt{SStar}$_{f,\texttt{eab}}$}}
\newcommand{\xcal}{{\boldsymbol{\mathcal{X}}}}
\newcommand{\ucal}{{\boldsymbol{\mathcal{U}}}}
    \newcommand{\FF}{\boldsymbol{\overline{F}}}
    \DeclareMathOperator{\chius}{\mbox{\texttt{Cl}}}
    \DeclareMathOperator{\chiusinv}{\mbox{\texttt{Cl}}^{\mbox{\tiny\texttt{inv}}}}
       \DeclareMathOperator{\chiuscons}{\mbox{\texttt{Cl}}^{\mbox{\tiny\texttt{cons}}}}
       \DeclareMathOperator{\Rd}{\mbox{\texttt{Rd}}}
    \newcommand{\inssemistar}{{\mbox{\rm\texttt{SStar}}}}
     \newcommand{\inssemistabft}{\widetilde{{\mbox{\rm\texttt{SStar}}}}}
          \newcommand{\inssemistarft}{{\mbox{\rm\texttt{SStar}}}_{\!_f}}
  \newcommand{\s}{{\mbox{\rm\texttt{s}}}}
 \DeclareMathOperator{\Over}{{\mbox{\rm\texttt{Overr}}}}
   \newcommand{\overr}{{\mbox{\rm\texttt{Overr}}}}
\newcommand{\overric}{{\mbox{\rm\texttt{Overr}}}_{\!\mbox{\tiny\it\texttt{ic}}}}
\newtheoremstyle{mio}%
	{}{} 
	{\itshape}{} 
	{\bfseries}{.}{ } 
	{#1 #2\thmnote{\mdseries~(\scshape #3)}} 
\theoremstyle{mio}
\newtheorem{teor}{Theorem}[section]
\newtheorem{cor}[teor]{Corollary}
\newtheorem{prop}[teor]{Proposition}
\newtheorem{lemma}[teor]{Lemma}
\theoremstyle{definition}
\newtheorem{ex}[teor]{ \textbf{Example}}
\newtheorem{oss}[teor]{Remark}
\title[Inverse-closed subsets of a spectral space]{The upper Vietoris topology on the space of inverse-closed subsets of a spectral space and applications}
\date{\bf\today}
\author{Carmelo A. Finocchiaro}
\address{\hskip -11pt Institute of Analysis and Number Theory, University of Technology, Graz,
	 Steyrergasse 30/II,  8010 Graz, Austria \newline
	 {\; \texttt{present address:}} Dipartimento di Matematica, Universit\`a degli Studi di Padova, Via Trieste 63, 35121 Padova, Italy}
\email{finocchiaro@math.tugraz.at \ or \ carmelo@math.unipd.it}
\author{Marco Fontana}
\address{\hskip -11pt Dipartimento di Matematica e Fisica, Universit\`a degli Studi
	``Roma Tre'',   Largo San Leonardo Murialdo, 1, 00146 Roma, Italy}
\email{fontana@mat.uniroma3.it}
\author{Dario Spirito}
\address{\hskip -11pt Dipartimento di Matematica e Fisica, Universit\`a degli Studi
	``Roma Tre'',  Largo San Leonardo Murialdo, 1, 00146 Roma, Italy}
\email{spirito@mat.uniroma3.it}
\keywords{Spectral space, spectral map, Zariski topology, constructible topology, inverse topology, hull-kernel topology, stably compact space, Smyth powerdomain, co-compact topology, de Groot duality, upper Vietoris topology, Scott topology, closure operation, semistar operation, radical ideal, 
ultrafilter topology.}
\subjclass[2010]{13A10, 13A15, 13B10, 13G05, 14A05, 54A10, 54F65}
\thanks{This work was partially supported by {\sl GNSAGA} of {\sl Istituto Nazionale di Alta Matematica}. The first named author was also supported by a Post Doc Grant from the University of Technology of Graz (Austrian Science Fund (FWF): P 27816).}
\begin{document}


\begin{abstract} 
Given an arbitrary spectral space $X$, we consider the set $\xcal(X)$ of all  nonempty  subsets of $X$ that are closed with respect to the inverse topology. We introduce a Zariski-like topology on $\xcal(X)$ and,  after observing that it coincides the upper Vietoris topology, we prove that $\xcal(X)$ is itself a spectral space, that this construction is functorial, and that $\xcal(X)$ provides an extension of $X$ in a more ``complete'' spectral space. Among the applications, we show  that, starting from an integral domain $D$, $\xcal(\spec(D))$ is homeomorphic to the (spectral) space of all the stable semistar operations of finite type on $D$.
\end{abstract} 

\maketitle

\section{Introduction}
 The first study  of the set of prime ideals from a topological point of view is due to  M. H. Stone \cite{stone1,stone2}, who developed the theory in the context of distributive lattices and Boolean algebras. Later, M. Hochster \cite{ho} defined a \emph{spectral space} as a topological space that  it is homeomorphic to the prime spectrum of a (commutative) ring  endowed with the Zariski topology, and proceeded to show that this class of topological spaces can be characterized in a purely topological way. More precisely, he proved that a topological space $X$ is spectral if and only if it is T$_0$, quasi-compact, it admits a basis of  quasi-compact open subspaces that is closed under finite intersections, and it is sober (i.e., every irreducible closed  subset of $X$ has a (unique) generic point). Spectral spaces can also be viewed through the lens of ordered topological spaces (via the concept of \emph{Priestley space}) \cite{cornish,prie1,prie2}, of bitopological spaces (through \emph{pairwise Stone spaces}) \cite{bezha}, or through domain theory (using the notion of \emph{stably compact space}) \cite{lawson}.

The first example of a spectral space which occurs naturally in commutative algebra, but it is not defined as a spectrum, is the Riemann-Zariski space $\zar(K|D)$ of all the valuation domains  with quotient field $K$ and containing $D$; this was proved by providing explicitly a B\'ezout domain whose prime spectrum is naturally homeomorphic to $\zar(K|D)$ (see \cite{do-fo}, \cite{fifolo2}  and \cite{hk}). Recently,  several other spaces, naturally occurring in multiplicative ideal theory, have been shown to be spectral: for example, this happens for the spaces $\Over(D)$ and $\overric(D)$  consisting, respectively, of  the overrings and of  the integrally closed overrings of an integral domain $D$. This result was later extended to the space $\inssemistarft(D)$ of all semistar operations of finite type on $D$, providing an appropriate and natural topological extension of the spectral space $\Over(D)$ (and, in particular, of both $\spec(D)$ and $\zar(K|D)$) \cite{FiSp}. Unlike the proof of the spectrality of $\zar(K|D)$, these spaces were shown to be spectral using a criterion based on ultrafilters \cite{Fi}, which is well-suited to this kind of spaces; however, this criterion is not constructive,  that is, it does not provide explicitly a ring whose spectrum is homeomorphic to the given  spectral space.

If $X$ is a topological space,   we denote by  $X^{\mbox{\tiny{{\texttt{d}}}}}$   the set $X$ endowed with the \emph{co-compact topology}, i.e., the topology on $X$ having, as a base of open sets, the complements of the subsets of $X$ that are both quasi-compact and obtained as an intersection of open sets \cite[Definition O-5.10]{GHKLMS}. In the context of spectral spaces, the co-compact topology of $X$ is called the \emph{inverse topology} of $X$, and plays a crucial r\^ole in Hochster's study of spectral spaces; it owes its name to the fact that the order canonically associated to the inverse topology coincides with the reverse order of that induced by the spectral topology. Subsets of a spectral space that are closed in the inverse topology are strictly related to the study of representations of integrally closed domains as intersections of collections of valuation domains (see also \cite{olb2010}, \cite{olb2011}, and \cite{olb2015}),  and they represent a way to classify several distinguished classes of semistar operations  of finite type: it was shown in \cite{fifolo2} and \cite{FiSp} that complete (or, \texttt{e.a.b.})  semistar operations  (respectively, stable semistar operations - definitions recalled later)  correspond to the subsets of $\Zar(D)$ (respectively, $\spec(D)$) that are closed in the inverse topology. Moreover, these two spaces are spectral extensions of the spaces $\Zar(D)$ and $\spec(D)$ (see also \cite{fi-fo-sp-survey}).

The aim of this paper is to study, for an arbitrary spectral space $X$, the space $\xcal(X)$ of all nonempty  subsets of $X$ that are closed with respect to the inverse topology; in particular, this study is carried out using the same ultrafilter-theoretic approach of \cite{Fi} and \cite{fifolo2}, and using techniques closer to commutative algebra than to general topology, in an attempt   to bridge the gap between the two communities. After endowing $\xcal(X)$ with a natural topology, we show that it is a spectral space and a spectral extension of the original space $X$.   It is worth no\-ting that this construction,   arisen in the topological context associated to commutative ring theory, is a special case of the construction of the {\sl Smyth powerdomain} of a general topological space $X$, endowed with the upper Vietoris topology  (\cite{V-1922} and \cite{michael}; the definitions are recalled later),  which is usually studied from the point of view of domain theory (see \cite[Section 5]{lawson} and \cite{smyth}).  In Section \ref{sect:appl} we see  that the two spaces of distinguished semistar operations recalled above are examples of the space $\xcal(X)$, when applied to the spectral spaces $X=\Zar(D)$ and $X=\spec(D)$. We  also show that the extension $X\hookrightarrow\xcal(X)$ represents, in a certain sense, a  spectral ``completion'' of the original space $X$, matching the possibility of extending the spectral space $\Over(D)$ inside  the  more ``complete'' spectral space of the semistar operations of finite type $\inssemistarft(D)$. The ``completeness'' mentioned above is related to  a universal-like property satisfied by $\xcal( X)$: broadly speaking, $\xcal( X)$ is the completion of $X$ with respect to the existence of the supremum for families of quasi-compact subspaces.   

\smallskip
We thank the referee for his/her thorough reports and highly appreciate the constructive comments and suggestions on the connections with recent results in domain theory, which significantly contributed to improving the quality of the paper and gave us the opportunity to connect two  far apart strands of research.


\section{Preliminaries}


It is well known that the prime spectrum of a commutative ring endowed with the Zariski topology  is always $T_0$ and quasi-compact, but almost never Hausdorff (it is Hausdorff only in the zero-dimensional case).
Thus, many authors have considered a finer topology on the prime spectrum of a ring, known as the \emph{constructible topology} (see \cite{ch}, \cite [pages 337-339]{EGA} or \cite[Chapter 3,  Exercises 27, 28 and 30]{am}) or as the \emph{patch topology} \cite{ho}.

 Following \cite{prie2} or \cite{sch-tr}, it is possible to introduce  the \emph{constructible topology}  by a Kuratowski closure ope\-rator: if    $ X$ is a spectral space, for each subset $Y$  of $X$, we set:
$$
\begin{array}{rl}
\chiuscons(Y) \! :=  \!\bigcap \{U \!\cup\! (X\!\setminus\! V) \mid  & \hskip -5pt \mbox{ $U$ and $V$ open and quasi-compact in }  X,  \\
 & \hskip -4pt U  \!\cup \! (X \!\setminus \! V) \supseteq Y \}\,.
 \end{array}
$$
We denote by $X^{\mbox{\tiny\texttt{cons}}} $ the set $X$, equipped with the constructible topology.
For Noetherian topological spaces, this definition of constructible topology coincides with the classical one given in \cite{ch}. 
It is well known that the constructible topology  is a refinement of the given topology and it  is always 
Hausdorff.   

\smallskip

 Given a topology on a set $X$, we can define an order $\leq_X$ on $X$ by setting $x\leq_X y$ if $y\in \chius(\{x\})$, where $\chius(Y)$ denotes the closure of a subset $Y$ of $X$. This order is the opposite of the specialization order generally used in topology; however, it is the one more commonly used in commutative algebra and algebraic geometry, since on the spectrum of a ring it coincides with the set-theoretic containment (for example, this is the order used in \cite{ho}). The set
$$
Y^{\mbox{\tiny\texttt{gen}}}  :=\downarrow\! Y:= \{x\in X \mid   x\leq y,\mbox{ for some }y\in Y \}
$$
is called \textit{closure under generizations of $Y$}. 
Similarly, using the opposite order,  the set
$$
Y^{\mbox{\tiny\texttt{sp}}} :=\uparrow\! Y:= \{x\in X \mid    y\leq x,\mbox{ for some }y\in Y \}
$$
is called \textit{closure under specia\-li\-zations of $Y$}. We say that $Y$ is \textit{closed under generi\-zations}   or a \textit{down set} (respectively, \textit{closed under specia\-li\-zations}  or an \textit{upper set})  if $Y=Y^{\mbox{\tiny\texttt{gen}}}$ (respectively, $Y=Y^{\mbox{\tiny\texttt{sp}}}$). 
It is straightforward that, for two elements $x, y$ in a spectral space $X$, we have: 
 $$
 x \leq y \quad \Leftrightarrow \quad \{x \}^{\mbox{\tiny\texttt{gen}}} \subseteq \{y \}^{\mbox{\tiny\texttt{gen}}} \quad \Leftrightarrow \quad\{x \}^{\mbox{\tiny\texttt{sp}}} \supseteq \{y \}^{\mbox{\tiny\texttt{sp}}}\,.
 $$

 Given  a spectral space $X$, M. Hochster \cite[Proposition 8]{ho} introduced a new topology on $ X$, that we call here the  {\it inverse topology}, by defining a Kuratowski closure operator, for each subset $Y$  of $X$, as follows:
  $$
 \chius^{\mbox{\tiny\texttt{inv}}}(Y) :=
\bigcap \{U   \mid  \mbox{ $U$ open and quasi-compact in } X,  \; 
U  \supseteq Y \}\,.
$$
If we denote by  $X^{\mbox{\tiny\texttt{inv}}}$ the set $X$ equipped with the inverse topology, Hochster proved that $X^{\mbox{\tiny\texttt{inv}}}$ is still a spectral space and the partial order on $X$ induced by the inverse topology is the opposite order of that induced by the given topology on $X$. 
In particular, the closure under generizations $\{x\}^{\mbox{\tiny\texttt{gen}}}$ of a singleton is closed in the inverse topology  of $X$, since   $\{x \}^{\mbox{\tiny\texttt{gen}}}=\bigcap\{U \mid  U \subseteq X\mbox{ quasi-compact and open},\, x\in U \}$  \cite[Proposition 8]{ho}.  
On the other hand, it is trivial, by the definition,  that the closure under specializations of a singleton $\{x \}^{\mbox{\tiny\texttt{sp}}}$ is closed in the given topology of $X$, since $\{x \}^{\mbox{\tiny\texttt{sp}}}= \chius(\{x\})$.

Recall that it is well known that  $\chiusinv(Y)=(\chiuscons(Y))^{\mbox{\tiny\texttt{gen}}}$  (see for instance \cite[Lemma 1.1]{fontana-AMPA} applied to the inverse topology or, explicitly,
 \cite[Remark 2.2]{fifolo2}; a more general situation is considered in \cite[Section 2.2]{lawson}).
It follows that each closed set in the inverse topology  (called for short, \emph{inverse-closed}) is closed under generizations  and, from \cite[Proposition 2.6]{fifolo2},  that a quasi-compact subspace $Y$ of $X$ closed for generizations is inverse-closed.  

\medskip

 We would like to mention here the existence of  several different point of views that might shed  further light on the theory of spectral spaces.
 
One perspective is through the language of \emph{ordered topological spaces}. Let $X$ be a topological space and $\leq$ an order on $X$: then, the pair $(X,\leq)$ is a \emph{Nachbin space} if $X$ is quasi-compact and the set $\{(x,y)\in X\times X \mid x\leq y \}$ is closed in $X\times X$. A \emph{Priestley space} is a Nachbin space $(X,\leq)$ such that, for every $x,y\in X$ with $x\nleq y$, there exists a clopen subset $\Gamma$ of $X$ that is closed under specialization (with respect to $\leq$) such that $x\in \Gamma$ and $y\notin \Gamma$. It is well known that there is an isomorphism between the category of Priestly spaces (and continuous monotone maps) and the category of spectral spaces (and spectral maps): if $X$ is a spectral space, and $\leq$ is the order induced by the topology, then $(X^{\mbox{\tiny\texttt{cons}}},\leq)$ is a Priestley space, while if $(X,\leq)$ is a Priestley space, then the topology on $X$ generated by the open subsets of $X$ that are closed under generizations (with respect to $\leq$) is a spectral space. In this context, reversing the order defining a Priestley space amounts to passing from a spectral topolgy to its inverse topology, while the case when $\leq$ is the indiscrete order (i.e., $x\leq y$ if and only if $x=y$) corresponds to the case where the spectral space $X$ is Hausdorff, i.e., when the topology on $X$ is equal to its own constructible topology. For a deeper insight on this topic, see, for instance, \cite{bezha,cornish,prie1,prie2} and \cite[Chapter VI]{gierz}.

Another point of view is offered by domain theory. A topological space  $X$ is said to be \emph{stably compact} (see for instance \cite{lawson}) if it satisfies the following properties:
\begin{enumerate}
\item[(a)] $X$ is T$_0$ and quasi-compact.
\item[(b)] $X$ is \textit{locally quasi-compact} (that is, for any open set $U$ of $X$ and any $x\in U$, there are a quasi-compact subspace $K$ of $X$ and an open set $V\subseteq X$ such that $x\in V\subseteq K\subseteq U$). 
\item[(c)] $X$ is \emph{coherent} (that is, any finite intersection of quasi-compact saturated subsets of $X$ is quasi-compact). 
\item[(d)] $X$ is sober. 
\end{enumerate}
\smallskip

  Note that stably compact spaces can also be defined as the retracts of the spectral spaces \cite[Lemma 3.13(b)]{simmons};  further   connections are outlined in the following well known results.

Recall that a subset of a topological space is called a \emph{saturated subset} if it is an intersection of a family of open sets.

\begin{lemma}\label{uno}
Let $X$ be a topological space having a basis   for the open sets given by the quasi-compact open subspaces. 
\begin{enumerate}[\,\,\,\,\,\,\rm (1)]
\item If $K\subseteq U\subseteq X$, $K$ is quasi-compact and $U$ is open   in $X$, then there exists   a quasi-compact open subspace $\Omega$ of $X$ such that $K\subseteq \Omega\subseteq U$. 
\item If $X$ is spectral, then a subset of $X$ is closed, with respect to the inverse topology, if and only if it is saturated and quasi-compact. 
\end{enumerate}
\end{lemma}
%

Under this terminology, a spectral space is exactly a stably compact space such that the quasi-compact open subspaces are a basis:

\begin{lemma}\label{due}
Let $X$ be a topological space. Then, the following conditions are equivalent.
\begin{enumerate}
\item[\rm (i)] $X$ is a spectral space. 
\item[\rm (ii)]  $X$ is a stably compact space with a basis   for the open sets given by the  quasi-compact open subspaces. 
\end{enumerate}
\end{lemma}
%

Note that the notion of stably compact space is strictly more general of that of spectral space. For instance, it is easy to see that the subspace $[0, 1]$ of the real line is stably compact but not a spectral space, for lack of quasi-compact open subspaces.

Finally, we observe that the isomorphism  between the category of Priestley spaces and spectral spaces (recalled above)  extends naturally to an   isomorphism  between the categories of Nachbin spaces (and continuous monotone maps) and of stably compact spaces (and proper maps). See \cite[Chapter VI]{gierz}.


 \medskip

\section{The space of inverse-closed subsets of a spectral space}

Let $X$ be a spectral space. The main object of this paper is the space
\begin{equation*}
\boldsymbol{\mathcal{X}}(X) := \{ Y \subseteq X \mid   Y \neq \emptyset,\,  Y =\chiusinv(Y)\},
\end{equation*}
that is, $\xcal(X)$ is the set of all nonempty subset of $X$ that are closed in the inverse topology. From the point of view of ordered topological spaces, if $(X,\leq)$ is a Priestley space, then $\xcal(X)$ is the space of nonempty closed downsets of $X$.

If $X$ is understood from the context, we shall simply write $\xcal$ instead of $\xcal(X)$. If $X=\spec(R)$ for some ring $R$, we write for short $\xcal(R)$ instead of $\xcal(\spec(R))$.

 We define a \textit{Zariski topology} on $\boldsymbol{\mathcal{X}}(X)$ by taking, as subbasis of open sets, the sets of the form
\begin{equation*}
\boldsymbol{\mathcal{U}}(\Omega):=\{Y\in \boldsymbol{\mathcal{X}} \mid Y\subseteq \Omega \},
\end{equation*}
where $\Omega$ varies among the quasi-compact open subspaces of $X$. 
Note that the previous subbasis is in fact a basis, since $\boldsymbol{\mathcal{U}}(\Omega)\cap \boldsymbol{\mathcal{U}}(\Omega')=\boldsymbol{\mathcal{U}}(\Omega\cap \Omega')$ and $\Omega\cap \Omega'$ is a quasi-compact open subspace of $X$, for any pair $\Omega,\Omega'$ of quasi-compact open subspaces  of $X$.
Moreover, $\emptyset \neq \Omega \in \boldsymbol{\mathcal{U}}(\Omega)$, since a quasi-compact open subset $\Omega$ of $X$ is a closed in the inverse topology of $X$. Note also that,  when $X=\spec({R})$, for some ring $R$, a generic basic open set of the Zariski topology on $\boldsymbol{\mathcal{X}} (R)$ is of the form
 $$
\boldsymbol{\mathcal{U}}(J) := \ \boldsymbol{\mathcal{U}}(\texttt{D}(J))=\{Y\in \boldsymbol{\mathcal{X}}(R) \mid Y\subseteq \texttt{D}(J) \},
$$
where $J$ is any finitely generated ideal of $R$.

The construction $\xcal(X)$ can also be understood in terms of the traditional domain-theoretic definition of the Smyth powerdomain   in  the setting of  topological spaces.
More precisely,
let $X$ be a topological space. Following, for example, \cite[Definition 5.2]{lawson}, the \emph{Smyth powerdomain of $X$} is the collection $\boldsymbol{\mathcal Q}(X)$ of all nonempty quasi-compact saturated subsets of $X$, equipped with the \emph{upper Vietoris topology}, that is, the topology on $\boldsymbol{\mathcal Q}(X)$ whose basic open sets are the sets of the form
$$
U^+:=\{Q\in \boldsymbol{\mathcal Q}(X)\mid Q\subseteq U \},
$$
for any open set $U$ of $X$. 

In view of  Lemma \ref{uno}(1),
if $X$ is a spectral space, then $\boldsymbol{\mathcal Q}(X)=\boldsymbol{\mathcal{X}}(X)$, as sets.
Now,  we show that this equality holds at a topological level. 

\begin{prop}\label{zariski-vietoris}
	Let $X$ be a spectral space. Then, the space $\boldsymbol{\mathcal{X}}(X)$, endowed with the Zariski topology, coincides with the space   ${\boldsymbol{\mathcal Q}}(X)$, endowed with the upper Vietoris topology.
\end{prop}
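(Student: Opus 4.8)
The plan is to show that the two topologies on the common underlying set $\xcal(X) = \boldsymbol{\mathcal Q}(X)$ have bases that generate the same topology. By Lemma \ref{uno}(2), the nonempty inverse-closed subsets of $X$ are precisely the nonempty quasi-compact saturated subsets, so the sets coincide; it remains to compare the Zariski basis $\{\ucal(\Omega) : \Omega \text{ quasi-compact open}\}$ with the upper Vietoris basis $\{U^+ : U \text{ open}\}$. Note that $\ucal(\Omega) = \Omega^+$ whenever $\Omega$ is quasi-compact open, so every Zariski-basic open set is upper Vietoris open; thus the Zariski topology is coarser than (or equal to) the upper Vietoris topology. The real content is the reverse inclusion.

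For the reverse inclusion, I would fix an arbitrary open set $U$ of $X$ and an element $Q \in U^+$, i.e.\ a nonempty quasi-compact saturated subset $Q$ with $Q \subseteq U$, and produce a Zariski-basic open neighborhood of $Q$ contained in $U^+$. Here I invoke Lemma \ref{uno}(1): since $Q$ is quasi-compact, $Q \subseteq U$, and $X$ (being spectral) has a basis of quasi-compact open subspaces, there is a quasi-compact open subspace $\Omega$ with $Q \subseteq \Omega \subseteq U$. Then $Q \in \ucal(\Omega)$, and for any $Y \in \ucal(\Omega)$ we have $Y \subseteq \Omega \subseteq U$, so $Y \in U^+$; hence $Q \in \ucal(\Omega) \subseteq U^+$. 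Since $Q$ and $U$ were arbitrary, every upper Vietoris basic open set is a union of Zariski-basic open sets, so the upper Vietoris topology is coarser than (or equal to) the Zariski topology. Combining the two inclusions gives the equality of topologies.

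The argument is short and the only genuine ingredient is Lemma \ref{uno}(1), which encapsulates the interpolation property (``squeeze a quasi-compact open set between a quasi-compact set and an open set'') that holds in any space with a basis of quasi-compact opens; this is exactly what makes the Zariski subbasis — indexed only by quasi-compact opens rather than all opens — already rich enough to recover the full upper Vietoris topology. I do not anticipate a serious obstacle: the main point to be careful about is simply to verify at the outset that $\ucal(\Omega) = \Omega^+$ as subsets of the common underlying set, which is immediate from the definitions once one knows (again via Lemma \ref{uno}(2)) that the underlying sets $\xcal(X)$ and $\boldsymbol{\mathcal Q}(X)$ genuinely coincide.
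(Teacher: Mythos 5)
Your proof is correct and takes essentially the same route as the paper: the set-theoretic identification via Lemma \ref{uno}(2), the observation that $\ucal(\Omega)=\Omega^+$ for quasi-compact open $\Omega$, and the use of Lemma \ref{uno}(1) to interpolate a quasi-compact open $\Omega$ between $Q$ and $U$, giving $Q\in\ucal(\Omega)\subseteq U^+$. Nothing further is needed.
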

\begin{proof}
	Clearly, it is sufficient to show that, if $U$ is an open subset of $X$, then $U^+$ is open, with respect to the Zariski topology  on $\boldsymbol{\mathcal{X}}(X)$. Take a set $Q\in U^+$. Since $Q$ is, in particular, quasi-compact,  Lemma  \ref{uno}(1)
	implies the existence of    a quasi-compact  open  subspace $\Omega$ of $X$ such that $Q\subseteq \Omega\subseteq U$. It follows immediately that $U\in \boldsymbol{\mathcal U}(\Omega)=\Omega^+\subseteq U^+$. The proof is now complete. 
\end{proof}

On the other hand, from the theory of stably compact spaces,
the following property hold.

\begin{teor}\cite[Theorem 5.9]{lawson}\label{stably-comp-vietoris}
	Let $X$ be a stably compact space. Then the Smyth powerdomain $\boldsymbol{\mathcal Q}(X)$ of $X$, equipped with the upper Vietoris topology, is stably compact. 
\end{teor}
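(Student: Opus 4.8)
The plan is to verify directly the four defining properties of a stably compact space for $\boldsymbol{\mathcal Q}(X)$ equipped with the upper Vietoris topology, leaning on the description of basic opens $U^+$ and on the hypotheses on $X$. First I would record the elementary behavior of the $U^+$ operator: $U^+\cap V^+=(U\cap V)^+$ and $U\subseteq V$ implies $U^+\subseteq V^+$, so the $U^+$ do form a genuine basis (not merely a subbasis); moreover $X^+=\boldsymbol{\mathcal Q}(X)$ and $\emptyset^+=\emptyset$. For the $T_0$ axiom, given $Q_1\ne Q_2$ in $\boldsymbol{\mathcal Q}(X)$ one may assume there is a point $x\in Q_1\setminus Q_2$; since $Q_2$ is quasi-compact and saturated and $X$ is locally quasi-compact and sober (hence, by the well-known Hofmann--Mislove setup, $Q_2$ is the intersection of the open sets containing it), there is an open $U\supseteq Q_2$ with $x\notin U$, whence $Q_2\in U^+$ but $Q_1\notin U^+$.

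For quasi-compactness of $\boldsymbol{\mathcal Q}(X)$, suppose $\boldsymbol{\mathcal Q}(X)=\bigcup_{i} U_i^+$ for open sets $U_i$ of $X$. Applying this to the singleton-like member: for each $x\in X$, the saturation $\uparrow\! x$ (the intersection of opens containing $x$) is a nonempty quasi-compact saturated set, hence lies in some $U_{i(x)}^+$, i.e.\ $x\in U_{i(x)}$; thus $\{U_i\}$ covers $X$, and by quasi-compactness of $X$ finitely many $U_{i_1},\dots,U_{i_n}$ cover $X$. Then $\bigcup_k U_{i_k}=X$, and one checks $\bigcup_k U_{i_k}^+=\boldsymbol{\mathcal Q}(X)$: any $Q$ is covered by $U_{i_1},\dots,U_{i_n}$, and quasi-compactness of $Q$ together with the saturated hypothesis lets us replace this finite cover by a single member after intersecting down --- here is where coherence of $X$ enters, since we need the relevant quasi-compact saturated pieces to behave well. (Alternatively one invokes Theorem~\ref{due}-style reasoning or the Hofmann--Mislove theorem to see directly that $\boldsymbol{\mathcal Q}(X)$ is quasi-compact.) For local quasi-compactness and coherence of $\boldsymbol{\mathcal Q}(X)$: given $Q\in U^+$, use local quasi-compactness of $X$ at each point of $Q$ and quasi-compactness of $Q$ to find an open $V$ and quasi-compact saturated $K$ with $Q\subseteq V\subseteq K\subseteq U$; then $Q\in V^+\subseteq K^+\subseteq U^+$, and one verifies $K^+$ is quasi-compact in $\boldsymbol{\mathcal Q}(X)$ by the same covering argument as above (its quasi-compactness reduces to that of $X$ restricted appropriately). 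Coherence follows because $K_1^+\cap K_2^+=(K_1\cap K_2)^+$ and $K_1\cap K_2$ is quasi-compact saturated by coherence of $X$.

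The genuinely delicate point --- and the step I expect to be the main obstacle --- is \textbf{sobriety} of $\boldsymbol{\mathcal Q}(X)$. One must show every irreducible closed subset $\mathcal{C}\subseteq\boldsymbol{\mathcal Q}(X)$ has a unique generic point. The natural candidate is $Q_0:=\bigcup\{Q : Q\in\mathcal{C}\}$, or rather its closure/saturation; the work is to show $Q_0$ is quasi-compact and saturated (saturation is automatic for a union of saturated sets, but quasi-compactness is not obvious and is exactly where the full strength of stable compactness of $X$ --- coherence plus local quasi-compactness plus sobriety, equivalently the Hofmann--Mislove machinery --- must be deployed), and then that $Q_0\in\mathcal{C}$ and that $\mathcal C=\overline{\{Q_0\}}=\{Q\in\boldsymbol{\mathcal Q}(X):Q\supseteq Q_0\}$ (the specialization order on $\boldsymbol{\mathcal Q}(X)$ being reverse inclusion). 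Irreducibility of $\mathcal C$ is used to show that the opens $U$ with $U^+\cap\mathcal C=\emptyset$ are closed under the operation forcing $Q_0$ into $\mathcal C$. Since carrying this out cleanly essentially reproves a theorem of the domain-theory literature, in the paper I would simply \emph{cite} \cite[Theorem~5.9]{lawson} rather than reproduce the argument, noting only that Proposition~\ref{zariski-vietoris} and Lemma~\ref{uno}(1) let us transport the conclusion to the Zariski-topologized $\xcal(X)$ in the spectral case.
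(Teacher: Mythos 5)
Your bottom line—cite \cite[Theorem 5.9]{lawson} rather than reprove it—is exactly what the paper does: Theorem \ref{stably-comp-vietoris} appears there with no proof at all, as a quoted result from the domain-theory literature, and the only ingredient the paper adds on its own is the quasi-compactness of the basic opens $\ucal(\Omega)$, proved inside Theorem \ref{embedding}(1). So as a matter of strategy you and the paper agree. However, since you do sketch an argument, the sketch itself has concrete defects that you should not leave standing. First, the quasi-compactness step ``any $Q$ is covered by $U_{i_1},\dots,U_{i_n}$, and quasi-compactness of $Q$ together with the saturated hypothesis lets us replace this finite cover by a single member'' is simply false: a quasi-compact saturated set covered by finitely many opens need not lie inside any single one of them (take $X$ with two incomparable closed points, $Q=X$, and the two complementary-ish opens). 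The correct argument is the one-point argument the paper itself uses for $\ucal(\Omega)$: $X$ is a point of $\boldsymbol{\mathcal Q}(X)$, and $X\in U^+$ forces $U=X$, hence $U^+=\boldsymbol{\mathcal Q}(X)$; likewise $K$ is the largest point of $\{Q\mid Q\subseteq K\}$, which gives quasi-compactness of that set without any covering gymnastics. Your coherence argument is also incomplete as stated: it only treats quasi-compact saturated subsets of $\boldsymbol{\mathcal Q}(X)$ of the special form $\{Q\mid Q\subseteq K\}$, whereas coherence quantifies over all quasi-compact saturated subsets (e.g.\ finite unions of such sets, which are not of that form).

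The more serious slip is in the sobriety sketch, which you correctly single out as the delicate point. By your own (correct) description of closures in the upper Vietoris topology, $\overline{\{Q_0\}}=\{Q\in\boldsymbol{\mathcal Q}(X)\mid Q\supseteq Q_0\}$, so the generic point of an irreducible closed $\mathcal{C}$ must be contained in every member of $\mathcal{C}$. Your candidate $Q_0:=\bigcup\{Q\mid Q\in\mathcal{C}\}$ points the wrong way: every member of $\mathcal{C}$ is contained in it, so $\mathcal{C}=\overline{\{Q_0\}}$ could hold only if $\mathcal{C}$ were a single point, and no amount of saturating repairs this. The standard (Hofmann--Mislove) argument takes instead $Q_0:=\bigcap\{U\mid U\ \text{open},\ U^+\cap\mathcal{C}\neq\emptyset\}$, uses irreducibility to see that these $U$ form a Scott-open filter of opens, and uses sobriety/well-filteredness of $X$ to conclude that $Q_0$ is nonempty, quasi-compact and saturated, belongs to $\mathcal{C}$, and is generic. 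Since your plan ends by deferring to \cite[Theorem 5.9]{lawson} anyway, none of this affects the paper, but as a self-contained proof outline the quasi-compactness step and the sobriety candidate would both fail as written.
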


By the previous Lemma \ref{due},
the fact that $\boldsymbol{\mathcal{X}}(X)$ is a spectral space can be seen in the frame of the theory of stably compact spaces.  We start with the following easy lemma, whose proof is left to the reader.

\begin{lemma}
Let $X$ be any spectral space. Then $\boldsymbol{\mathcal{X}}(X)$, endowed with the Zariski topology, is a T$_0$-space.
\end{lemma}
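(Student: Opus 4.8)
The plan is to verify directly that $\xcal(X)$, endowed with the Zariski topology, satisfies the $T_0$ separation axiom: given two distinct points $Y_1, Y_2 \in \xcal(X)$, I must produce a basic open set $\ucal(\Omega)$ containing exactly one of them. Since $Y_1 \neq Y_2$, without loss of generality there is a point $x \in Y_1 \setminus Y_2$. The key observation is that $Y_2$ is inverse-closed, hence, by Lemma~\ref{uno}(2), it is saturated and quasi-compact; in particular $Y_2$ is an intersection of quasi-compact open subspaces of $X$ (using that quasi-compact open sets form a basis and applying Lemma~\ref{uno}(1) to write each open set containing $Y_2$ as containing a quasi-compact open one still containing $Y_2$).

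First I would use this to find a quasi-compact open subspace $\Omega$ of $X$ with $Y_2 \subseteq \Omega$ but $x \notin \Omega$: indeed, since $x \notin Y_2 = \chiusinv(Y_2) = \bigcap\{U \mid U \text{ quasi-compact open}, U \supseteq Y_2\}$, there must be at least one quasi-compact open $\Omega \supseteq Y_2$ with $x \notin \Omega$. Then $Y_2 \in \ucal(\Omega)$, whereas $Y_1 \notin \ucal(\Omega)$ because $x \in Y_1 \setminus \Omega$ shows $Y_1 \not\subseteq \Omega$. This exhibits a basic Zariski-open set separating $Y_2$ from $Y_1$, which is exactly what $T_0$ requires. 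The argument is symmetric in the roles of $Y_1$ and $Y_2$ up to the choice of $x$, so no further case analysis is needed.

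There is essentially no obstacle here: the only point requiring a moment's care is to justify that the inverse closure of $Y_2$ is genuinely an intersection of quasi-compact open sets (rather than of arbitrary open sets), which is immediate from the very definition of $\chiusinv$ recalled in the Preliminaries, combined with the hypothesis that $Y_2 = \chiusinv(Y_2)$. Everything else is a direct unwinding of the definition of the Zariski topology on $\xcal(X)$ and of the $T_0$ axiom. This is why the authors leave the proof to the reader; the lemma is a warm-up before the more substantial task of establishing sobriety, quasi-compactness, and the existence of a basis of quasi-compact opens for $\xcal(X)$.
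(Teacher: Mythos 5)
Your argument is correct and is exactly the routine verification the paper has in mind when it leaves the proof to the reader: since $Y_2=\chiusinv(Y_2)$ is by definition the intersection of the quasi-compact open sets containing it, any $x\in Y_1\setminus Y_2$ yields a quasi-compact open $\Omega\supseteq Y_2$ with $x\notin\Omega$, so $\ucal(\Omega)$ contains $Y_2$ but not $Y_1$ (and the case $Y_2\not\subseteq Y_1$ is symmetric). The detour through Lemma~\ref{uno} is unnecessary but harmless, as you yourself note that the definition of $\chiusinv$ already gives what is needed.
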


\begin{teor} \label{embedding} 
Let $X$ be a spectral space.
\begin{itemize}
\item[(1)] The space
 $\boldsymbol{\mathcal{X}}:= \boldsymbol{\mathcal{X}}(X)$, 
 endowed with the Zariski topology  (i.e., with the upper Vietoris topology), is a spectral space.
 \item[(2)] Let $Y_1, \ Y_2 \in \boldsymbol{\mathcal{X}}$. Then, 
$ Y_1 \subseteq Y_2$   if and only if   $ Y_1\leq _{\boldsymbol{\mathcal{X}}} Y_2$.
 \item[(3)] The canonical map $\varphi:X\rightarrow \boldsymbol{\mathcal{X}}$, defined by
$
\varphi(x):=\{x\}^{\mbox{\tiny{\emph{\texttt{gen}}}}}
$, for each $x\in X$,   is  a spectral embedding  (and, in particular, an order-preserving embedding  between ordered sets, with the ordering induced by the Zariski topologies). 
\item[(4)]   $\boldsymbol{\mathcal{X}}$ has a unique 
maximal point (i.e., $X$). 
\end{itemize}
\end{teor}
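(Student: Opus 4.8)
\textbf{Proof plan for Theorem \ref{embedding}.}
The strategy is to prove the four parts roughly in the order (2), (1), (3), (4), since the order-theoretic description in (2) will be used to identify closures in $\xcal$, which in turn is what makes the spectrality argument in (1) run smoothly. For part (2), the key observation is that a basic open set $\ucal(\Omega)$ is an up-set in the (commutative-algebra) specialization order of $\xcal$: if $Y_1 \subseteq Y_2$ and $Y_2 \in \ucal(\Omega)$, then trivially $Y_1 \in \ucal(\Omega)$, so that every basic open set containing $Y_2$ also contains $Y_1$, which by definition of $\leq_{\xcal}$ gives $Y_1 \leq_{\xcal} Y_2$. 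Conversely, if $Y_1 \not\subseteq Y_2$, pick $x \in Y_1 \setminus Y_2$; since $Y_2$ is inverse-closed and hence (Lemma \ref{uno}(2)) saturated and quasi-compact, and $x \notin Y_2$, there is a quasi-compact open $\Omega$ of $X$ with $Y_2 \subseteq \Omega$ but $x \notin \Omega$ (using Lemma \ref{uno}(1) to descend from a general open neighbourhood of the compact saturated $Y_2$ to a quasi-compact open one). Then $Y_2 \in \ucal(\Omega)$ but $Y_1 \notin \ucal(\Omega)$, so $Y_1 \not\leq_{\xcal} Y_2$.

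For part (1), by Lemma \ref{due} it suffices to show that $\xcal$ is stably compact with a basis of quasi-compact open subspaces; the preceding lemma already gives $T_0$. The basis claim will follow once we check each $\ucal(\Omega)$ is quasi-compact in $\xcal$: given a cover of $\ucal(\Omega)$ by basic opens $\ucal(\Omega_i)$, translate it into a statement about the spectral space $X$ — the point $\Omega \in \ucal(\Omega)$ itself lies in some $\ucal(\Omega_{i_0})$, forcing $\Omega \subseteq \Omega_{i_0}$, so $\ucal(\Omega) \subseteq \ucal(\Omega_{i_0})$ and a single member of the cover already suffices. (Indeed this shows $\ucal(\Omega)$ has a maximum element $\Omega$, a fact worth recording for part (4).) Quasi-compactness of the whole space $\xcal$ is the case $\Omega = X$. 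Local quasi-compactness is immediate from the basis of quasi-compact opens. For coherence, one shows a finite intersection of quasi-compact saturated subsets of $\xcal$ is quasi-compact; using part (2), the saturated sets of $\xcal$ are up-sets in the containment order, and one can reduce, via Lemma \ref{uno}(1) applied inside $\xcal$, to intersections of sets of the form $\ucal(\Omega)$, which are handled by the identity $\ucal(\Omega)\cap\ucal(\Omega')=\ucal(\Omega\cap\Omega')$ already noted in the text. Soberness is the step I expect to be the main obstacle: given an irreducible closed subset $\mathcal{C}$ of $\xcal$, I must produce a generic point, i.e. a single inverse-closed set $Y_0$ with $\chius(\{Y_0\}) = \mathcal{C}$. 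The natural candidate is $Y_0 := \bigcup\{Y : Y \in \mathcal{C}\}$, or rather its inverse-closure $\chiusinv(Y_0)$; one then checks, using irreducibility of $\mathcal{C}$ to see that this union is ``directed enough'' and using that inverse-closed sets are quasi-compact in $X^{\mathrm{cons}}$, that $\chiusinv(Y_0) \in \mathcal{C}$ and that it specializes to everything in $\mathcal{C}$ (which is part (2) again) while being specialized by nothing outside. Alternatively — and this is the cleaner route given the referee's input — one invokes Theorem \ref{stably-comp-vietoris}: since $X$ is stably compact, $\boldsymbol{\mathcal Q}(X)$ with the upper Vietoris topology is stably compact, hence sober; and by Proposition \ref{zariski-vietoris} this space is exactly $\xcal$ with the Zariski topology; combined with the basis of quasi-compact opens established above, Lemma \ref{due} then yields spectrality directly. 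I would present the domain-theoretic route as the main proof and perhaps sketch the direct union-of-members argument as a remark.

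For part (3), first check $\varphi$ is well defined: $\{x\}^{\mathrm{gen}}$ is inverse-closed by Hochster's result quoted in the Preliminaries (it equals $\bigcap\{U : U$ quasi-compact open$, x \in U\}$), and it is nonempty. Injectivity and order-preservation follow from the chain of equivalences recorded in the Preliminaries: $x \leq y \iff \{x\}^{\mathrm{gen}} \subseteq \{y\}^{\mathrm{gen}}$, combined with part (2), so that $\varphi(x) \leq_{\xcal} \varphi(y) \iff x \leq_X y$, and in particular $\varphi(x) = \varphi(y)$ forces $x = y$ by $T_0$-ness of $X$. For the topological embedding, the preimage $\varphi^{-1}(\ucal(\Omega)) = \{x \in X : \{x\}^{\mathrm{gen}} \subseteq \Omega\}$; since $\Omega$ is open, hence a down-set, $\{x\}^{\mathrm{gen}} \subseteq \Omega$ is equivalent to $x \in \Omega$, so $\varphi^{-1}(\ucal(\Omega)) = \Omega$, proving continuity and, since the $\Omega$'s form a basis of $X$ and the $\ucal(\Omega)$'s a basis of $\xcal$, that $\varphi$ is a homeomorphism onto its image (which is spectral, being a subspace picture of $X$). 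One should also note $\varphi$ is a spectral map, i.e. that preimages of quasi-compact opens are quasi-compact open — but that is exactly $\varphi^{-1}(\ucal(\Omega)) = \Omega$ again. Finally, for part (4): $X$ itself is inverse-closed (it is quasi-compact, open, and a down-set, hence by Lemma \ref{uno}(2) inverse-closed), so $X \in \xcal$; and for every $Y \in \xcal$ we have $Y \subseteq X$, i.e. $Y \leq_{\xcal} X$ by part (2), so $X$ is a — and by antisymmetry of $\leq_{\xcal}$ (which holds since $\xcal$ is $T_0$) the unique — maximal point of $\xcal$.
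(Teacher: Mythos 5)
Your main route is exactly the paper's: you establish that each basic open set $\ucal(\Omega)$ is quasi-compact by exploiting that $\Omega$ itself is its largest point (so a single member of any cover suffices), and then you combine Proposition \ref{zariski-vietoris}, Theorem \ref{stably-comp-vietoris} and Lemma \ref{due}; your explicit verifications of parts (2)--(4), which the paper dismisses as straightforward, are also correct. One caution about the alternative direct-sobriety sketch you would keep as a remark: since $\chius(\{Y_0\})=\{Z\in\xcal : Y_0\subseteq Z\}$, an irreducible closed subset $\mathcal{C}$ of $\xcal$ is an up-set under containment and its generic point is its \emph{minimum}, so the natural candidate is $\bigcap\{Y \mid Y\in\mathcal{C}\}$ (nonempty and inverse-closed by compactness in the constructible topology), not the union $\bigcup\{Y \mid Y\in\mathcal{C}\}$ or its inverse closure, which would typically be a maximal element of $\mathcal{C}$ (often $X$ itself) and not generic.
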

\begin{proof}
(1) Let $\ucal(\Omega)$ be a member of the canonical basis of $\xcal(X)$, where $\Omega\neq\emptyset$ is a quasi-compact open subspace of $X$. If $\mathcal{A}$ is an open cover of $\ucal(\Omega)$, then there is a set $A\in\mathcal{A}$ such that $\Omega\in A$.
Hence, there is a nonempty quasi-compact open set $V$ of $X$ such that $\Omega\in\ucal(V)\subseteq A$. If now $U\in\ucal(\Omega)$, then $U\subseteq\Omega\subseteq V$, and thus $U\in A$; it follows that the singleton $\{A\}$ is an open subcover of $\ucal(\Omega)$. Therefore, $\ucal(\Omega)$ is quasi-compact.

By Proposition \ref{zariski-vietoris} and  Theorem  \ref{stably-comp-vietoris}, $\xcal(X)$ is stably compact; by Lemma \ref{due}, and the previous reasoning, it follows that $\xcal(X)$ is a spectral space.

Statements (2), (3) and (4) are straightforward.
\end{proof}

\begin{oss}
  As it was done in the first version of the present paper, it is also possible to prove the spectrality of $\xcal(X)$ by using, instead of \cite[Theorem 5.9]{lawson},  ultrafilter-theoretic techniques developed by ring theorists  for studying spectral spaces; we sketch how to do it. 
By \cite[Corollary 3.3]{Fi}, it suffices to show that, if $\ms U$ is an ultrafilter on $\boldsymbol{\mathcal{X}}$, then the set 
\begin{equation*}
	\boldsymbol{\mathcal{X}}_{\boldsymbol{\mathcal{T}}}(\ms U): =\{Y\in \boldsymbol{\mathcal{X}} \mid  [\mbox{for each }\boldsymbol{\mathcal{U}}(\Omega), \;\; 
	Y\in \boldsymbol{\mathcal{U}}(\Omega)\Leftrightarrow \boldsymbol{\mathcal{U}}(\Omega)\in \ms U]\,  \}
\end{equation*}
is nonempty. Set    
\begin{equation*}
\mathscr F(\ms U) := \{\Omega \mid \Omega\subseteq X \mbox{ quasi-compact open and }\boldsymbol{\mathcal{U}}(\Omega)\in \ms U\}\,.
\end{equation*}
Then, $\mathscr F(\ms U)$ does not contain the empty set and has the finite intersection property; therefore,
$$
Y_0:=\bigcap\{\Omega \mid \Omega \in \mathscr F(\ms U)  \}
$$
is a nonempty inverse-closed subset of $X$, i.e., $Y_0\in\xcal(X)$.

Furthermore, if $Y_0\in\ucal(\Omega_0)$ and $\Omega_0\notin\mathscr{U}$, then, since $\mathscr{U}$ is closed by finite intersection,
	$$
	\mathscr C:=\{\Omega\cap (X\setminus\Omega_0) \mid \Omega \mbox{ quasi-compact open in $X$ and } \boldsymbol{\mathcal{U}}(\Omega)\in\ms U    \}
	$$
	is a collection of sets having the finite intersection property, and each element of $\mathscr{C}$ is closed in the constructible topology. Therefore, its intersection is nonempty, and any point in the intersection belongs to $Y_0\setminus\Omega_0$, a contradiction. Thus $\Omega_0\in\mathscr{U}$. Conversely, if $\Omega_0\in\mathscr{U}$ then 
	$$
	\Omega_0\supseteq \bigcap\{\Omega \mid \Omega \subseteq X \mbox{ quasi-compact open and }\boldsymbol{\mathcal{U}}(\Omega)\in\ms U \}=Y_0
	$$
	i.e., $Y_0\in \boldsymbol{\mathcal{U}}(\Omega_0)$. Hence, $Y_0\in\boldsymbol{\mathcal{X}}_{\boldsymbol{\mathcal{T}}}(\ms U)$ and $\xcal(X)$ is a spectral space.
\end{oss}

\begin{oss}\label{oss-inv} {\bf (a)}
Let $X$ be a spectral space and, as above, let $X^{\mbox{\tiny{{\texttt{inv}}}}}$ denote the set $X$, endowed with the inverse topology. 
Then, keeping in mind the Hochster's duality (i.e., sketchy, $(X^{\mbox{\tiny{{\texttt{inv}}}}})^{\mbox{\tiny{{\texttt{inv}}}}} =X$),
 the set $\xcal'(X):=\xcal(X^{\mbox{\tiny{{\texttt{inv}}}}})$ consists of all the nonempty closed sets of $X$, with respect to the given spectral topology. 
 Keeping in mind that the quasi-compact open subspaces of $X^{\mbox{\tiny{{\texttt{inv}}}}}$ are precisely the complements of the quasi-compact open subspaces of $X$, it follows immediately, by definition, that the Zariski topology of $\xcal'(X)$ has as a basis of open sets the collection of the sets of the type:
$$ \ucal'(\Omega):=\ucal(X\!\setminus\!\Omega)=\{C\in\xcal'(X)\mid C\cap \Omega=\emptyset  \}\,,
$$
for $\Omega$ varying among the quasi-compact open subspaces of $X^{\mbox{\tiny{{\texttt{inv}}}}}$.
Dually, the cano\-ni\-cal map $\varphi':X^{\mbox{\tiny{{\texttt{inv}}}}}\rightarrow \xcal'(X)$, defined by
 $x\mapsto \{x\}^{\mbox{\tiny{{\texttt{sp}}}}}$, is a spectral topolo\-gical embedding.
Now, let $X$ be the prime spectrum of a ring $R$, and let
$\Rd(R)$ be the set of all proper radical ideals of $R$, endowed with the so called {\it hull-kernel topology}, that is the topology whose subbasic open sets are those of the form $\texttt{D}(x_1, x_2, \z,x_n):=\{H \in \Rd(R)\mid (x_1,x_2, \z,x_n)R\nsubseteq H \}$. 
 In \cite{fi-fo-sp-MANUSCRIPTA},  it is proved that $\Rd(R)$ is a spectral space that extends naturally the space $\spec(R)$, endowed with the Zariski topology.
  Moreover, it is proved that  there is a canonical homeomorphism 
  $\lambda: \xcal'(R):= \xcal'(\spec(R))\rightarrow \Rd(R)^{\mbox{\tiny{{\texttt{inv}}}}}$, 
  mapping a nonempty closed set $C\subseteq \spec(R)$ to the radical ideal $\lambda(C):=\bigcap\{P \mid P\in C \}$.
             
  {\bf (b)}  
Recall that, for any topological space $X$, the {\it co-compact topology} on $X$ is the topology having as a base for the open sets the complements of quasi-compact saturated  
  subsets of $X$ \cite[Definition O-5.10]{GHKLMS}. The topological space $X$ endowed with this topology, denoted by $X^{\mbox{\tiny{{\texttt{d}}}}}$, is called the {\it de Groot dual} of $X$. It is known that, if $X$ is a stably compact space, $X^{\mbox{\tiny{{\texttt{d}}}}}$ is also stably compact and $(X^{\mbox{\tiny{{\texttt{d}}}}})^{\mbox{\tiny{{\texttt{d}}}}} = X$ \cite[Proposition 3.6]{lawson}. For a spectral space $X$,  $X^{\mbox{\tiny{{\texttt{inv}}}}}$ coincides with the de Groot dual $X^{\mbox{\tiny{{\texttt{d}}}}}$ (Lemma \ref{uno}(2)). 
\end{oss}

We collect in the following remark some observations concerning Theorem \ref{embedding}.

\begin{oss}  We preserve the notation of Theorem \ref{embedding}.

{\rm (a)} The subspace $\varphi(X)$ is dense in $\boldsymbol{\mathcal{X}}(X)$. 
In fact, let $ \boldsymbol{\mathcal{U}}$ be a nonempty open subset of $\boldsymbol{\mathcal{X}}(X)$, take an element $C\in\boldsymbol{\mathcal{U}}$ and a quasi-compact open subspace $\Omega$ of $X$ such that $C\in \boldsymbol{\mathcal{U}}(\Omega)\subseteq \boldsymbol{\mathcal{U}}$. If $c\in C$, then $\{ c\}^{\mbox{\tiny{{\texttt{gen}}}}}\subseteq C\subseteq \Omega$, and thus $\{c\}^{\mbox{\tiny{{\texttt{gen}}}}}\in \boldsymbol{\mathcal{U}}(\Omega)\subseteq \boldsymbol{\mathcal{U}}$. This proves that $\varphi(X)\cap \Omega\neq \emptyset$.

{\rm (b)} Following \cite[D\'efinition (2.6.3)]{EGA}, recall that a subset $X_0$ of a topological space $X$ is called to be \emph{very dense in $X$} if, for any open sets $U,V\subseteq X$, the equality $U\cap X_0=V\cap X_0$ implies $U=V$.  

The subspace $\varphi(X)$ is not very dense in $\boldsymbol{\mathcal{X}}(X)$.
Indeed,  let $V_1,V_2$ be two discrete rank-one valuation domains having the same quotient field. Then,  the prime spectrum $X$ of the ring $D:=V_1\cap V_2$ consists exactly of $(0)$ and of the two maximal ideals  $M_1$ and $M_2$ which are the  (incomparable) contractions   in $D$ of the maximal ideals of $V_1$ and $V_2$.
Then, in the present situation, 
$$
\begin{array}{rl}
\boldsymbol{\mathcal{X}}(X)=&\{\{(0) \},\{(0),M_1\},\{(0), M_2\},X   \}\,; \\
\varphi(X) =& \hskip -4pt \{\{(0) \},\{(0),M_1\},\{(0), M_2\}\} \,.  
\end{array} 
$$
 Since $\{X\}$ is closed in $\boldsymbol{\mathcal{X}}(X)$, it follows that $\varphi(X)$ is open in $\boldsymbol{\mathcal{X}}(X)$. From this fact, we deduce immediately that $\varphi(X)$ is dense but not very dense in $\boldsymbol{\mathcal{X}}(X)$.

{\rm (c)} Let $X$ be a spectral space and let $\boldsymbol{\hat{\mathcal{X}}}(X):= \boldsymbol{\hat{\mathcal{X}}}:= \{ Y \subseteq X \mid  Y = {\chiusinv}(Y)\} =\boldsymbol{\ \mathcal{X}}(X) \cup \{\emptyset\}$. 
Note that the techniques used in the proof of Theorem \ref{embedding}(1) allow also to show that $\boldsymbol{\hat{\mathcal{X}}}$ (endowed with an obvious extension of the topology of 
$\boldsymbol{ \mathcal{X}}$) is a spectral space.
Since $\ucal(\emptyset)=\{\emptyset\}$ is open in $\boldsymbol{\hat{\mathcal{X}}}$, then 
$\boldsymbol{\mathcal{X}}$ is a closed (spectral) subspace of  $\boldsymbol{\hat{\mathcal{X}}}$.

\end{oss}

\medskip
 
Before stating next result, we observe that $X \in \varphi(X)$ if and only if $X$ has a unique closed point (in the given spectral topology).

\begin{prop}\label{homeo}
Let $X$ be a spectral space and let   $\varphi:X \rightarrow \boldsymbol{\mathcal{X}}(X)$  the topological embedding defined in Theorem \ref{embedding}(3). Then,  $\varphi(X)=   \boldsymbol{\mathcal{X}}(X)$ if and only if $(X,\leq)$ is linearly ordered.
\end{prop}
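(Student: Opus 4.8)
The plan is to prove the equivalence in two directions, exploiting the description of $\xcal(X)$ as the set of nonempty closed downsets of the Priestley space $(X^{\mbox{\tiny\texttt{cons}}},\leq)$, or equivalently (by Lemma~\ref{uno}(2)) as the set of nonempty quasi-compact saturated subsets of $X$. Recall that $\varphi(X)$ consists exactly of the principal downsets $\{x\}^{\mbox{\tiny\texttt{gen}}}=\downarrow\! x$.

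First I would prove the easy direction: if $(X,\leq)$ is \emph{not} linearly ordered, then $\varphi(X)\subsetneq\xcal(X)$. Pick incomparable points $x,y\in X$, i.e.\ $x\not\leq y$ and $y\not\leq x$. The set $Y:=\{x\}^{\mbox{\tiny\texttt{gen}}}\cup\{y\}^{\mbox{\tiny\texttt{gen}}}=\downarrow\! x\,\cup\downarrow\! y$ is a downset, and it is inverse-closed: each $\{x\}^{\mbox{\tiny\texttt{gen}}}$ is inverse-closed by \cite[Proposition~8]{ho}, and a finite union of inverse-closed (equivalently: quasi-compact saturated) sets is again quasi-compact and saturated, hence inverse-closed by Lemma~\ref{uno}(2). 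So $Y\in\xcal(X)$. But $Y$ has (at least) the two maximal points $x$ and $y$ with respect to $\leq$, whereas every element $\{z\}^{\mbox{\tiny\texttt{gen}}}$ of $\varphi(X)$ has the unique maximal point $z$; more directly, if $Y=\{z\}^{\mbox{\tiny\texttt{gen}}}$ then $x\leq z$ and $y\leq z$, and $z\in Y$ forces $z\leq x$ or $z\leq y$, contradicting incomparability. Hence $Y\notin\varphi(X)$, so $\varphi(X)\neq\xcal(X)$.

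For the converse, assume $(X,\leq)$ is linearly ordered; I must show every nonempty inverse-closed $Y$ equals $\{z\}^{\mbox{\tiny\texttt{gen}}}$ for some $z$. Since $Y$ is a nonempty downset, it suffices to show $Y$ has a greatest element $z$ (then $Y=\downarrow\! z$ by linearity: any $w\in Y$ has $w\leq z$, and any $w\leq z$ lies in $Y$ since $Y$ is a downset). To produce the greatest element I would use quasi-compactness of $Y$ in the constructible topology, which is Hausdorff and in which $Y$ is closed (being inverse-closed). Consider the family of closed sets $\{\,Y\cap\{w\}^{\mbox{\tiny\texttt{sp}}} : w\in Y\,\}$: each is closed in $X^{\mbox{\tiny\texttt{cons}}}$ (as $\{w\}^{\mbox{\tiny\texttt{sp}}}=\chius(\{w\})$ is closed in the spectral topology, hence in the finer constructible one) and nonempty ($w$ itself belongs to it). By linearity of $\leq$, this family is totally ordered by inclusion: for $w,w'\in Y$ either $w\leq w'$ or $w'\leq w$, and $w\leq w'$ gives $\{w'\}^{\mbox{\tiny\texttt{sp}}}\subseteq\{w\}^{\mbox{\tiny\texttt{sp}}}$. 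Hence it has the finite intersection property, and by quasi-compactness of $Y^{\mbox{\tiny\texttt{cons}}}$ there is a point $z$ in the intersection. Then $z\in Y$, and $z\in\{w\}^{\mbox{\tiny\texttt{sp}}}$, i.e.\ $w\leq z$, for every $w\in Y$; thus $z$ is the greatest element of $Y$ and $Y=\{z\}^{\mbox{\tiny\texttt{gen}}}\in\varphi(X)$.

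The main obstacle is the compactness argument in the second direction: one needs the right topology in which $Y$ is both compact and has enough closed sets to separate points, and the constructible topology is the natural choice since inverse-closed sets are closed (indeed compact) there and it refines the spectral topology so that the specialization-closures $\{w\}^{\mbox{\tiny\texttt{sp}}}$ remain closed. An alternative, perhaps cleaner, formulation would be to invoke directly that a nonempty quasi-compact subset of a poset-topological space that is totally ordered by $\leq$ has a maximum (a Zorn-type statement made effective by compactness), but the chain/finite-intersection-property argument above is self-contained. One should also double-check the degenerate remark preceding the proposition, namely that $X\in\varphi(X)$ iff $X$ has a unique closed point, which is the special case $Y=X$ of the same principle.
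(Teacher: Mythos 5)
Your proof is correct and follows essentially the same route as the paper: for surjectivity it uses the finite-intersection-property of the chain $\{Y\cap\{w\}^{\mbox{\tiny\texttt{sp}}}\mid w\in Y\}$ together with (constructible-topology) compactness of $Y$ to produce a greatest element, exactly as in the paper's argument with $\mathscr C=\{\chius(\{y\})\cap Y\mid y\in Y\}$; for the other direction it uses the set $\{x\}^{\mbox{\tiny\texttt{gen}}}\cup\{y\}^{\mbox{\tiny\texttt{gen}}}$ for incomparable $x,y$, which is the paper's converse argument stated contrapositively.
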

\begin{proof}   Set, as usual $\boldsymbol{\mathcal{X}} :=\boldsymbol{\mathcal{X}}(X)$.  In order to avoid the trivial case, we can assume that $X$ is not a singleton. 
First, suppose that $(X,\leq)$ is linearly ordered, and let $Y\in \boldsymbol{\mathcal{X}}$. Consider the collection $\mathscr C :=\{\chius(\{y\})\cap Y \mid y\in Y \}$ of closed sets of $Y$ (with respect to the subspace topology induced by the given topology of $X$). Since $(X,\leq)$ is linearly ordered, $\mathscr C$ has the finite intersection property. 
On the other hand, 
  $Y$ is a quasi-compact subspace of $X$, since, in particular,  it is  closed in the constructible topology of $X$ and so it is quasi-compact in the constructible topology and, {\sl a fortiori}, in the given topology of $X$. Thus, there is a point  $y_0\in\bigcap \{ C \mid C \in \mathscr C\}$. Now, it is easy to infer that 
$Y=\{y_0\}^{\mbox{\tiny{{\texttt{gen}}}}}$. 

Conversely, assume that  $\varphi(X)= \boldsymbol{\mathcal{X}}$, 
and take two points $x,y\in X$. 
Clearly, the set   $Z:=\{x,y\}^{\mbox{\tiny{{\texttt{gen}}}}}=\{x\}^{\mbox{\tiny{{\texttt{gen}}}}}\cup\{y\}^{\mbox{\tiny{{\texttt{gen}}}}}$ 
is nonempty and closed with respect to the inverse topology on $X$, i.e.,  $Z \in \boldsymbol{\mathcal{X}}$.   
By assumption, there is a point $z\in X$ such that $\varphi(z)=\{z\}^{\mbox{\tiny{{\texttt{gen}}}}} =\{x\}^{\mbox{\tiny{{\texttt{gen}}}}}\cup\{y\}^{\mbox{\tiny{{\texttt{gen}}}}}$. The inclusion $\supseteq$ implies $x,y\leq z$. On the other hand, the inclusion $\subseteq$ implies that  $z\leq x$ or $z\leq y$. From these facts it follows easily that  $(X,\leq)$ is linearly ordered. 
\end{proof}
We compare next the dimensions of $X$ and $\xcal(X)$ with the cardinality $|X|$ of the spectral space $X$.

\begin{prop}
Let $X$ be a spectral space. Then, $\dim(\xcal(X))=|X|-1\geq\dim(X)$. Moreover, in the finite dimensional case, $\dim(\xcal(X))$ $=\dim(X)$ if and only if $X$ is linearly ordered. 
\end{prop}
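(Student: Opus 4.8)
The plan is to work entirely in the order-theoretic picture. By Theorem~\ref{embedding}(2), for $Y_1,Y_2\in\xcal(X)$ one has $Y_1\le_{\xcal}Y_2$ precisely when $Y_1\subseteq Y_2$, so a chain of length $n$ in $\xcal(X)$ is exactly a strictly increasing chain $Y_0\subsetneq Y_1\subsetneq\cdots\subsetneq Y_n$ of nonempty inverse-closed subsets of $X$. From such a chain one reads $1\le|Y_0|<|Y_1|<\cdots<|Y_n|\le|X|$, whence $n\le|X|-1$; thus $\dim(\xcal(X))\le|X|-1$. Since also $\dim(X)\le|X|-1$ (a chain of $X$ injects into $X$), the inequality $|X|-1\ge\dim(X)$ follows, and the sharper $\dim(X)\le\dim(\xcal(X))$ is anyway immediate from $\varphi\colon X\hookrightarrow\xcal(X)$ being an order-embedding (Theorem~\ref{embedding}(3)). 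It remains to establish $\dim(\xcal(X))\ge|X|-1$.

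The one fact I would isolate first is: for any finite subset $F$ of a spectral space $X$ one has $\chiusinv(F)={\downarrow}F$, so that ${\downarrow}F$ is inverse-closed (and nonempty when $F\ne\emptyset$); in particular every finite downset of $X$ is inverse-closed. This is immediate from $\chiuscons(F)=F$ (a finite set is closed in the Hausdorff constructible topology) together with the identity $\chiusinv(Z)=(\chiuscons(Z))^{\mbox{\tiny\texttt{gen}}}$ recorded in Section~2. (By Lemma~\ref{uno}(2) the nonempty inverse-closed subsets of a \emph{finite} spectral space are exactly its nonempty downsets, but only the stated direction is needed.)

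Now suppose $X$ is finite, say $|X|=n$. Starting from $D_n:=X$ and repeatedly deleting a maximal element of the current downset --- which again produces a downset --- I obtain nonempty downsets $D_1\subsetneq D_2\subsetneq\cdots\subsetneq D_n=X$ with $|D_i|=i$. By the isolated fact each $D_i$ lies in $\xcal(X)$, so this is a chain of length $n-1$; together with the upper bound, $\dim(\xcal(X))=n-1=|X|-1$. If instead $X$ is infinite, then ``$\dim(\xcal(X))=|X|-1$'' means $\dim(\xcal(X))=\infty$, so it suffices to produce chains of every finite length. By the classical Ramsey dichotomy, $(X,\le)$ contains an infinite chain or an infinite antichain. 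In the first case $\dim(X)=\infty\le\dim(\xcal(X))$ and we are done. In the second, fix pairwise incomparable distinct $a_0,a_1,a_2,\dots\in X$ and set $Y_i:={\downarrow}\{a_0,\dots,a_i\}$; each $Y_i$ is nonempty and inverse-closed by the isolated fact, and $a_{i+1}\in Y_{i+1}\setminus Y_i$ since $a_{i+1}\le a_j$ with $j\le i$ would force $a_{i+1}=a_j$. Hence $Y_0\subsetneq Y_1\subsetneq\cdots$ is arbitrarily long.

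Finally, for the ``moreover'' assertion: by the above, $\xcal(X)$ is finite-dimensional exactly when $X$ is finite, so let $|X|=n<\infty$. Then $\dim(\xcal(X))=n-1$, whereas $\dim(X)\le n-1$ always, so $\dim(\xcal(X))=\dim(X)$ if and only if $\dim(X)=n-1$, i.e.\ if and only if $X$ possesses a chain of $n$ elements; such a chain must exhaust $X$, so this happens precisely when $(X,\le)$ is linearly ordered. I expect the only genuinely delicate point to be the bookkeeping of the infinite case (including the reading of ``$|X|-1$'' there and the appeal to the infinite Ramsey dichotomy); the finite case is just the elementary greedy construction of a maximal chain of downsets.
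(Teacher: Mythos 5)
Your argument is correct, and its core coincides with the paper's: the upper bound $\dim(\xcal(X))\le|X|-1$ via strictly increasing cardinalities along a chain of inverse-closed sets, and, in the finite case, a greedy chain of downsets of every cardinality (you peel off maximal elements from the top; the paper adds minimal elements from the bottom — the same construction read in the opposite direction), with the key observation that finite downsets, being quasi-compact and closed under generizations (equivalently, $\chiusinv(F)=(\chiuscons(F))^{\mbox{\tiny\texttt{gen}}}=F^{\mbox{\tiny\texttt{gen}}}$ for finite $F$), lie in $\xcal(X)$. The only real divergence is the infinite case: you invoke the Ramsey chain/antichain dichotomy and treat the two cases separately, whereas the paper handles it uniformly and without Ramsey, by taking an arbitrary finite subset $X'$ of cardinality $t$, enumerating it as $x_1,\ldots,x_t$ so that $x_i$ is never above $x_j$ for $i<j$ (successively choosing minimal elements), and using the chain $C_1\subsetneq\cdots\subsetneq C_t$ with $C_i:=\{x_1,\ldots,x_i\}^{\mbox{\tiny\texttt{gen}}}$ — i.e., exactly the finite-case construction applied inside $X'$. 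Your route is valid but heavier than needed; the paper's buys uniformity between the finite and infinite cases at no extra cost. The ``moreover'' part is handled identically in both.
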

\begin{proof}
Suppose first that $X$ is finite. 
If $Y_0 <_{\xcal(X)} Y_1 <_{\xcal(X)} \cdots<_{\xcal(X)} Y_n$ is a chain of points in $\xcal(X)$, then 
$Y_0 \subsetneq Y_1 \subsetneq  \cdots \subsetneq Y_n$ is a chain of  nonempty subsets of $X$. In particular, $|Y_{k-1}| < |Y_k|$ for all $k$, $1\leq k \leq n$. Therefore, $n+1 \leq|X|$   and $\dim(\xcal(X)) \leq|X|-1$.

On the other hand, we can write $X$ as a sequence $x_1, x_2,\ldots,x_t$ (where $t:=|X|$) such that $x_i$ is not bigger that $x_j$ for every  $i<j$ (simply, take $x_1$ as a minimal element of  $X$ and $x_i$ as a minimal element of $X\setminus\{x_1,\ldots,x_{i-1}\}$ for $i \geq 2$).
 In particular, each $X_i:=\{x_1, x_2,\ldots,x_i\}$ is inverse-closed in $X$, so that $X_1<_{\xcal(X)} X_2<_{\xcal(X)} \cdots <_{\xcal(X)}  X_t$ is a chain of points in $\xcal(X)$ of length $t-1$.  Therefore, $\dim(\xcal(X))\geq|X|-1$  and, by the previous paragraph, we conclude that $\dim(\xcal(X))=|X|-1$.

Suppose now that $X$ is infinite.  Take a positive integer $t$ and let $X'$ be a subset of $X$ of cardinality $t$. As before, we can enumerate the elements $x_1, x_2, \ldots, x_t$ of $X'$ in such a way that 
$x_i$ is not bigger that $x_j$ for every  $i<j$.
Then, for each $i\in\{1,2, \ldots,t\}$, the set $C_i:=\{x_1, x_2, \ldots, x_i \}^{\mbox{\tiny{{\texttt{gen}}}}}$ is closed in the inverse topology of $X$, i.e., $C_i\in \xcal(X)$. Clearly, $C_i \subsetneq C_{i+1}$ for each $i=1,2, \ldots,t-1$, since $x_{i+1}\in C_{i+1}\setminus C_i$. This proves that, for any positive integer $t$, there is a chain of lenght $t-1$ in $\xcal(X)$. Thus, $\dim(\xcal(X))=\infty$. 

If $X$ is finite,  $\dim(X) = |X| -1$ if and only if, in $X$, there is a chain of the type $x_0 <x_1< \dots  x_{|X|-1}$. This means that all elements of $X$ are in such chain, i.e., $X$ is linearly ordered.
\end{proof}

\begin{oss}
While the inequality $|X|-1\geq\dim(X)$ is sharp by the previous proposition, 
the more non-comparable elements the set $X$ contains the more $\dim(X)$ is small with respect to $|X|$. For example, if $X$ is homeomorphic to the prime spectrum of the direct product of $n+1$ fields, $n\geq 1$, then $\dim(X)=0$ while $|X|-1=n$.  

If $\dim(X)$ is not finite, then clearly $\dim(\xcal(X))=\dim(X)$, but we can easily choose $X$  to be not linearly ordered. 
\end{oss}

\section{Functorial properties}
A map $\psi:X_1\rightarrow X_2$ of spectral spaces is called \emph{spectral} if $\psi^{-1}(\Omega)$ is a quasi-compact open subset of $X_1$ for every quasi-compact open subset $\Omega$ of $X_2$.

\begin{prop}\label{prop:appl}
Let $\psi:X_1\rightarrow X_2$ be a spectral map of spectral spaces and denote by  $\varphi_1:  X_1 \rightarrow\xcal(X_1)$ and $\varphi_2:  X_2 \rightarrow\xcal(X_2)$  the topological embeddings defined in Theorem \ref{embedding}(3).  Then, there is a spectral map $\xcal(\psi):\xcal(X_1)\rightarrow\xcal(X_2)$ such that $\xcal(\psi)\circ \varphi_1 = \varphi_2\circ \psi$.
\end{prop}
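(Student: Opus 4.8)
The plan is to define $\xcal(\psi)$ on an inverse-closed set by taking the inverse-closure of its image: for $Y\in\xcal(X_1)$ put
\[
\xcal(\psi)(Y):=\chiusinv(\psi(Y)).
\]
First I would check that this is a well-defined map $\xcal(X_1)\to\xcal(X_2)$. Since $Y\neq\emptyset$, the image $\psi(Y)$ is nonempty, so $\chiusinv(\psi(Y))$ is a nonempty inverse-closed subset of $X_2$, i.e.\ an element of $\xcal(X_2)$. Note that the naive assignment $Y\mapsto\psi(Y)$ would not do, because a spectral map need not carry inverse-closed sets to inverse-closed sets; passing to $\chiusinv$ repairs this.

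The main step is to establish, for every $Y\in\xcal(X_1)$ and every quasi-compact open subspace $\Omega$ of $X_2$, the chain of equivalences
\[
\chiusinv(\psi(Y))\subseteq\Omega\ \Longleftrightarrow\ \psi(Y)\subseteq\Omega\ \Longleftrightarrow\ Y\subseteq\psi^{-1}(\Omega).
\]
The right-hand equivalence is purely set-theoretic; in the left-hand one, the nontrivial implication holds because a quasi-compact open subset is inverse-closed (recalled in the Preliminaries), so $\psi(Y)\subseteq\Omega$ forces $\chiusinv(\psi(Y))\subseteq\chiusinv(\Omega)=\Omega$. Feeding this into the definitions yields
\[
\xcal(\psi)^{-1}(\ucal(\Omega))=\{Y\in\xcal(X_1)\mid\chiusinv(\psi(Y))\subseteq\Omega\}=\ucal(\psi^{-1}(\Omega)).
\]
Since $\psi$ is spectral, $\psi^{-1}(\Omega)$ is a quasi-compact open subspace of $X_1$, so $\ucal(\psi^{-1}(\Omega))$ is a basic open subset of $\xcal(X_1)$, and it is quasi-compact by the argument already used in the proof of Theorem \ref{embedding}(1) (when $\psi^{-1}(\Omega)=\emptyset$ this set is empty, hence trivially quasi-compact). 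As the sets $\ucal(\Omega)$, for $\Omega$ quasi-compact open in $X_2$, form a basis of $\xcal(X_2)$, this simultaneously shows that $\xcal(\psi)$ is continuous and that it pulls quasi-compact open sets back to quasi-compact open sets; that is, $\xcal(\psi)$ is a spectral map.

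Finally I would verify $\xcal(\psi)\circ\varphi_1=\varphi_2\circ\psi$, which amounts to $\chiusinv(\psi(\{x\}^{\mbox{\tiny\texttt{gen}}}))=\{\psi(x)\}^{\mbox{\tiny\texttt{gen}}}$ for each $x\in X_1$. For the inclusion $\subseteq$: $\psi$, being spectral, is continuous and hence monotone for the order $\leq$, so $z\leq x$ implies $\psi(z)\leq\psi(x)$; thus $\psi(\{x\}^{\mbox{\tiny\texttt{gen}}})\subseteq\{\psi(x)\}^{\mbox{\tiny\texttt{gen}}}$, and since the right-hand side is inverse-closed, applying $\chiusinv$ to the left-hand side preserves the inclusion. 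For $\supseteq$: $\psi(x)\in\psi(\{x\}^{\mbox{\tiny\texttt{gen}}})\subseteq\chiusinv(\psi(\{x\}^{\mbox{\tiny\texttt{gen}}}))$, and the last set is inverse-closed, so it contains $\chiusinv(\{\psi(x)\})=\{\psi(x)\}^{\mbox{\tiny\texttt{gen}}}$. I do not expect a genuine obstacle in this argument; the only delicate point is precisely the necessity of the inverse-closure in the definition of $\xcal(\psi)$, and then the check that this correction interferes neither with spectrality (ensured by the displayed chain of equivalences, which rests on quasi-compact open sets being inverse-closed) nor with the compatibility with the embeddings (ensured by $\{p\}^{\mbox{\tiny\texttt{gen}}}$ being inverse-closed together with the monotonicity of $\psi$).
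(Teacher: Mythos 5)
Your proposal is correct and follows essentially the same route as the paper: the same definition of $\xcal(\psi)$ (your $\chiusinv(\psi(Y))$ coincides with the paper's $\psi(Y)^{\mbox{\tiny\texttt{gen}}}$, since $\psi(Y)$ is quasi-compact), the same key identity $\xcal(\psi)^{-1}(\ucal(\Omega))=\ucal(\psi^{-1}(\Omega))$, and the same use of spectrality of $\psi$. Your explicit verification of $\xcal(\psi)\circ\varphi_1=\varphi_2\circ\psi$ and the remark about the case $\psi^{-1}(\Omega)=\emptyset$ are welcome details that the paper leaves implicit.
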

\begin{proof}
 First note that each $C\in\xcal(X_1)$ is quasi-compact in $X_1$ and so $\psi( C)$ is quasi-compact in $X_2$ and thus $\chiusinv(\psi( C)) = \psi( C)^{\mbox{\tiny{{\texttt{gen}}}}} =\bigcup\{\{x_2\}^{\mbox{\tiny{{\texttt{gen}}}}}  \mid x_2 \in \psi(C)\} =\sup\{\{x_2\}^{\mbox{\tiny{{\texttt{gen}}}}}  \mid x_2 \in \psi( C)\}$ \cite[Remark 2.2 and Proposition 2.6]{fifolo2}.
For every $C\in\xcal(X_1)$, define $\xcal(\psi)( C):=\psi( C)^{\mbox{\tiny{{\texttt{gen}}}}}$. 
In particular, we have that
 $\xcal(\psi)(\{x\}^{\mbox{\tiny{{\texttt{gen}}}}})= 
 \{\psi(x)\}^{\mbox{\tiny{{\texttt{gen}}}}}$, for each $x \in X_1$.

Let  $\Omega$ be a quasi-compact open subset of $X_2$. We claim that 
$$
 (\xcal(\psi))^{-1}(\ucal(\Omega)) =\ucal(\psi^{-1}(\Omega)),
$$ 
which is quasi-compact open in $\xcal(X_2)$, since $\psi$ is spectral (and so $\psi^{-1}(\Omega)$ is quasi-compact open in $X_1$). 
As a matter of fact, let $C \in (\xcal(\psi))^{-1}(\ucal(\Omega))$, i.e., $\xcal(\psi)( C) \subseteq \Omega$, therefore  $\psi^{-1}(\xcal(\psi)( C)) \subseteq  \psi^{-1}(\Omega)$ and thus, clearly, $C \subseteq$ $ \psi^{-1}(\xcal(\psi)( C)) $.
Conversely, let $C \subseteq \psi^{-1}(\Omega)$, then $ \xcal(\psi)( C) \leq \xcal(\psi)( \psi^{-1}(\Omega))$.
  Moreover, we have that $\xcal(\psi)( \psi^{-1}(\Omega))$ $= 
  (\psi( \psi^{-1}(\Omega)))^{\mbox{\tiny{{\texttt{gen}}}}} \subseteq \Omega^{\mbox{\tiny{{\texttt{gen}}}}} =\Omega$.
  Therefore $ \xcal(\psi)( C) \in \ucal(\Omega)$. 
  We conclude that $\xcal(\psi)$ is a spectral map. 
\end{proof}

It is well known that, for compact Hausdorff spaces and, hence, for Stone spaces, the upper Vietoris construction is functorial. Si\-mi\-larly, we now show that
the assignment $\xcal$ defined by the pair $(X\mapsto\xcal(X), \psi\mapsto\xcal(\psi))$ can be interpreted as a functor from the category of spectral spaces into itself.

\begin{prop}\label{prop:restrizPsi}
We preserve the notation of  Proposition \ref{prop:appl}.

\begin{itemize}
\item[(1)]
If $X_1\xrightarrow{\psi_1}X_2\xrightarrow{\psi_2}X_3$ is a chain of spectral maps, then the spectral map $\xcal(\psi_2\circ\psi_1):  \xcal(X_1) \rightarrow\xcal(X_3)$, induced by $\psi_2\circ\psi_1$ is equal to the composition  $\xcal(\psi_2) \circ \xcal(\psi_1)$. It follows that the assignment $(X\mapsto\xcal(X), \psi\mapsto\xcal(\psi))$  defines a functor from the category of spectral spaces into itself. 
\item[(2)] 
Let $\Psi:\xcal(X_1)\rightarrow\xcal(X_2)$ be a spectral map.
Assume that there exists a spectral map $\psi:X_1\rightarrow X_2$ such that $\Psi \circ \varphi_1= \varphi_2 \circ \psi$, then, $\xcal(\psi)\leq\Psi$ (i.e., $\xcal(\psi)( C) \subseteq \Psi( C)$ for each $C\in \xcal(X_1))$.
\end{itemize}
\end{prop}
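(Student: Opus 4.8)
The plan is to reduce both statements to the explicit description $\xcal(\psi)(C)=\psi(C)^{\mbox{\tiny\texttt{gen}}}$ recorded in the proof of Proposition \ref{prop:appl}, together with two elementary observations that I would establish first: a spectral map $\psi$ is monotone for the order $\leq$ (it is continuous for the given topologies, since the quasi-compact open subspaces form a basis, and continuous maps preserve the specialization order), and, for every subset $S$ of a spectral space, $S^{\mbox{\tiny\texttt{gen}}}=\bigcup_{s\in S}\{s\}^{\mbox{\tiny\texttt{gen}}}$ (this is just the definition of $\downarrow S$, and is already used in the proof of Proposition \ref{prop:appl}).

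For part (1), I would fix $C\in\xcal(X_1)$, put $D:=\psi_1(C)$ (a quasi-compact subset of $X_2$), and unwind the two sides: $\xcal(\psi_2\circ\psi_1)(C)=\psi_2(D)^{\mbox{\tiny\texttt{gen}}}$, whereas $\xcal(\psi_2)(\xcal(\psi_1)(C))=\xcal(\psi_2)(D^{\mbox{\tiny\texttt{gen}}})=\psi_2(D^{\mbox{\tiny\texttt{gen}}})^{\mbox{\tiny\texttt{gen}}}$. So the task becomes to prove $\psi_2(D^{\mbox{\tiny\texttt{gen}}})^{\mbox{\tiny\texttt{gen}}}=\psi_2(D)^{\mbox{\tiny\texttt{gen}}}$. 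The inclusion $\supseteq$ is immediate from $D\subseteq D^{\mbox{\tiny\texttt{gen}}}$; for $\subseteq$ it suffices to show $\psi_2(D^{\mbox{\tiny\texttt{gen}}})\subseteq\psi_2(D)^{\mbox{\tiny\texttt{gen}}}$, and this follows because if $x\leq d$ for some $d\in D$, then $\psi_2(x)\leq\psi_2(d)\in\psi_2(D)$ by monotonicity of $\psi_2$. To conclude functoriality I would also check identity-preservation: $\xcal(\ude_X)(C)=C^{\mbox{\tiny\texttt{gen}}}=C$ because $C$, being inverse-closed, is closed under generizations; combined with Proposition \ref{prop:appl}, which already exhibits $\xcal(\psi)$ as a morphism of spectral spaces, this yields the functor.

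For part (2), I would again fix $C\in\xcal(X_1)$ and test $\Psi$ against the principal elements of $\xcal(X_1)$ sitting below $C$. Given $c\in C$, since $C$ is closed under generizations we have $\varphi_1(c)=\{c\}^{\mbox{\tiny\texttt{gen}}}\subseteq C$, hence $\varphi_1(c)\leq_{\xcal(X_1)}C$ by Theorem \ref{embedding}(2). Because $\Psi$ is spectral, hence monotone, $\Psi(\varphi_1(c))\leq_{\xcal(X_2)}\Psi(C)$, that is, $\Psi(\varphi_1(c))\subseteq\Psi(C)$, again by Theorem \ref{embedding}(2). By the hypothesis $\Psi\circ\varphi_1=\varphi_2\circ\psi$ we get $\Psi(\varphi_1(c))=\varphi_2(\psi(c))=\{\psi(c)\}^{\mbox{\tiny\texttt{gen}}}$, so $\{\psi(c)\}^{\mbox{\tiny\texttt{gen}}}\subseteq\Psi(C)$ for every $c\in C$. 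Taking the union over $c\in C$ and using the union formula for $S^{\mbox{\tiny\texttt{gen}}}$ gives $\xcal(\psi)(C)=\psi(C)^{\mbox{\tiny\texttt{gen}}}=\bigcup_{c\in C}\{\psi(c)\}^{\mbox{\tiny\texttt{gen}}}\subseteq\Psi(C)$, as desired.

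I do not expect a genuine obstacle here: once the formula $\xcal(\psi)(C)=\psi(C)^{\mbox{\tiny\texttt{gen}}}$ and the monotonicity of spectral maps are in hand, everything is bookkeeping. The only two points needing a moment's care are the set identity $\psi_2(D^{\mbox{\tiny\texttt{gen}}})^{\mbox{\tiny\texttt{gen}}}=\psi_2(D)^{\mbox{\tiny\texttt{gen}}}$ in part (1), and, in part (2), recognizing that the right move is to probe $\Psi$ with the points $\varphi_1(c)$, $c\in C$, and then reassemble via $S^{\mbox{\tiny\texttt{gen}}}=\bigcup_{s\in S}\{s\}^{\mbox{\tiny\texttt{gen}}}$.
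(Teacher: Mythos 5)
Your proof is correct and follows essentially the same route as the paper: part (1) is exactly the ``straightforward'' verification the paper leaves out (reduction to $\psi_2(D^{\mbox{\tiny\texttt{gen}}})^{\mbox{\tiny\texttt{gen}}}=\psi_2(D)^{\mbox{\tiny\texttt{gen}}}$ via monotonicity, plus identities), and part (2) is the same probing argument with the points $\varphi_1(c)$, $c\in C$, differing only in trivial bookkeeping (you reassemble via $\psi(C)^{\mbox{\tiny\texttt{gen}}}=\bigcup_{c\in C}\{\psi(c)\}^{\mbox{\tiny\texttt{gen}}}$, while the paper invokes that $\Psi(C)$ is inverse-closed to pass from $\psi(C)\subseteq\Psi(C)$ to $\chiusinv(\psi(C))\subseteq\Psi(C)$).
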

\begin{proof}
(1) The proof is straightforward.

(2) Let $C\in\xcal(X_1)$. For every $c\in C$, we have  $C\supseteq \varphi_1( c) = \{c\}^{\mbox{\tiny{{\texttt{gen}}}}} $ (i.e., $C\geq\varphi_1( c)$ with respect to the order of $\xcal(X_1)$ induced by the Zariski topology).
 Since $\Psi$ is continuous, it is order-preserving, and thus $\Psi( C)\geq\Psi(\varphi_1( c))= \varphi_2(\psi( c)) = \{\psi( c)\}^{\mbox{\tiny{{\texttt{gen}}}}} $. Hence, $\psi( c)\in\Psi( C)$, and thus $\psi( C)\subseteq\Psi( C)$. 
 Since $\Psi( C)$ is closed in the inverse topology on $X_2$, then $\chiusinv(\psi( C))\subseteq\Psi( C)$. On the other hand, by definition, $\xcal(\psi)( C)=
  \psi( C)^{\mbox{\tiny{{\texttt{gen}}}}}  =\chiusinv(\psi( C)) 
  \leq \Psi( C)$, hence $\xcal(\psi)\leq\Psi$.
\end{proof}

\begin{oss}
The previous result is very similar to   the statement concerning the functoriality of the Smyth powerdomain construction $\boldsymbol{\mathcal Q}(X)$ proved in \cite[Proposition IV.8.19, page 371]{GHKLMS} when $X$ is a \emph{directed-complete partial order} (that is, a partially ordered set where each directed subset has a supremum) endowed with the topology generated by the upper sets (called the \emph{Scott topology}).
 However, despite the similarity of the construction, the Scott topology does not coincide with the given spectral topology but, in general, it is stronger than the inverse topology \cite[Proposition 2.9]{HK}. Nevertheless, by order-theoretic reasons, the functoriality of the Smyth powerdomain construction $\boldsymbol{\mathcal Q}(X)$ given in \cite{GHKLMS} is closer to functoriality of the construction $\xcal'(X):=\xcal(X^{\mbox{\tiny{{\texttt{inv}}}}})$ \cite{fi-fo-sp-MANUSCRIPTA} recalled briefly in Remark \ref{oss-inv}(a).  
\end{oss}


The next example shows that it is possible to have $\Psi\neq\xcal(\psi)$, i.e., it is possible to have more than one ``extension'' of $\psi:X_1\rightarrow X_2$  between the spaces $\xcal(X_1)$ and $\xcal(X_2)$.  On the other side, we will show in the following Proposition \ref{psi} that this situation does not occur when $\Psi$ is a homeomorphism.
 
\begin{ex} \label{ex-ext}
Let $X_1=\{a_1,a_2, b\}$ and $X_2:=\{c_1, c_2\}$. Suppose that $a_1$ and $a_2$ are incomparable but both smaller than $b$ and suppose also that $c_1< c_2$.   It is straightforward that the order structures of $X_1$ and $X_2$ are compatible with the order of suitable spectral topologies on $X_1$ and $X_2$.  When $X_1$ and $X_2$ are equipped with these spectral topologies, it is easy to see that $\xcal(X_1)=\{\{a_1\},\{a_2\},\{a_1,a_2\},\{b,a_1,a_2\}\}$, while $\xcal(X_2)=\{\{c_1\},\{c_1,c_2\}\}$. Let $\psi:X_1\rightarrow X_2$  be the spectral map defined by $\psi(a_1):=\psi(a_2):=c_1$ and $\psi(b):=c_2$.   Let $\Psi:\xcal(X_1)\rightarrow \xcal(X_2)$ be the map defined by 
 $\Psi(\{a_1\}):=\Psi(\{a_2\}):=\{c_1\}$ and $\Psi(\{b,a_1,a_2\}):=\Psi(\{a_1,a_2\}):=\{c_1,c_2\}$. Clearly, $\Psi$ is a spectral map of spectral spaces, since 
 $$
 \Psi^{-1}(\ucal(\{c_1\}))=\{\{a_1\},\{a_2\} \}=\ucal(\{a_1\})\cup \ucal(\{a_2\}),
 $$
 and  $\Psi^{-1}(\ucal(\{c_1, c_2\}))= \Psi^{-1}(X_2)= X_1$.   Moreover, it is obvious that $\Psi$ ``extends'' $\psi$. 
 However,  the ``natural extension'' $\xcal(\psi)$ of $\psi$ (defined in Proposition \ref{prop:appl}) is such that     $\xcal(\psi)(\{a_1,a_2\})=\{c_1\}$, and thus $\Psi\neq \xcal(\psi)$. The situation is illustrated in Figure \ref{fig:Psi-xcalpsi}.

\begin{figure}
\includegraphics[scale=0.7]{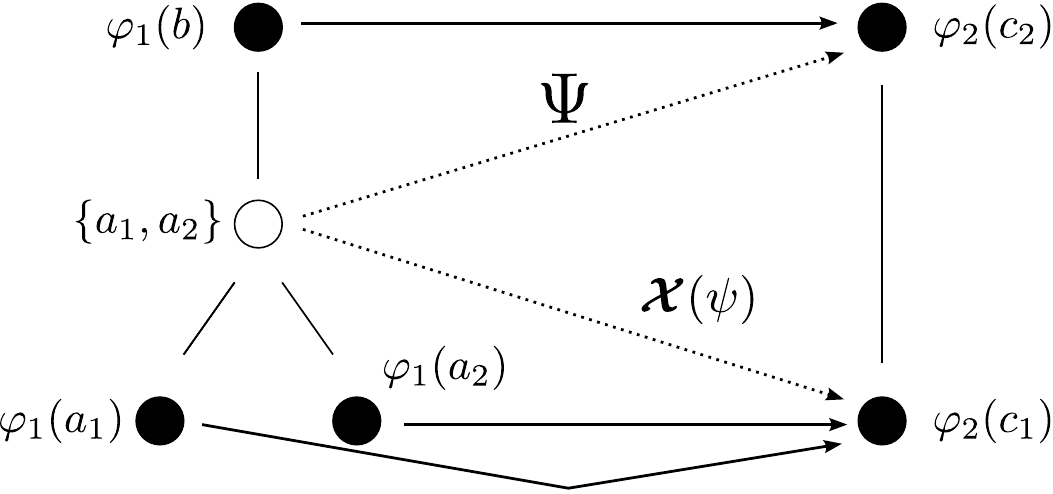}
\caption{Illustration of   Example \ref{ex-ext}. Black circle represents elements of $\varphi_1(X_1)$ and $\varphi_2(X_2)$.}
\label{fig:Psi-xcalpsi}
\end{figure}
\end{ex}

\begin{prop} \label{psi}
Let $X_1,X_2$ be spectral spaces and let    $\varphi_1:X_1\rightarrow\xcal(X_1)$ and $\varphi_2:X_2\rightarrow\xcal(X_2)$ be the canonical embeddings (as in Theorem \ref{embedding}(3)).
\begin{itemize}
\item[(1)]
If $\psi:X_1\rightarrow X_2$ is a topological embedding (respectively, an homeomorphism),  then  $\xcal(\psi):\xcal(X_1)\rightarrow\xcal(X_2)$  (as defined in Proposition \ref{prop:appl}) is a topological embedding (resp., homeomorphism).
\item[(2)] If $\Psi:\xcal(X_1)\rightarrow\xcal(X_2)$ is a homeomorphism, then there exists a   unique  homeomorphism $\psi:
X_1\rightarrow X_2$ such that $\Psi = \xcal(\psi)$ (and so  $\Psi \circ \varphi_1= \varphi_2 \circ \psi$).
\item [(3)]   In  particular, $X_1$ and $X_2$ are homeomorphic if and only if $\xcal(X_1)$ and $\xcal (X_2)$ are homeomorphic. 
\end{itemize} 
\end{prop}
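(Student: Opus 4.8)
The plan is to prove the three parts in order, using the functoriality already established in Proposition \ref{prop:restrizPsi} together with the uniqueness-of-extension mechanism hidden in Proposition \ref{prop:restrizPsi}(2). For part (1), I would first handle the homeomorphism case: if $\psi$ is a homeomorphism with inverse $\psi^{-1}$, then by functoriality $\xcal(\psi)\circ\xcal(\psi^{-1})=\xcal(\psi^{-1}\circ\psi)=\xcal(\ude_{X_2})=\ude_{\xcal(X_2)}$ and symmetrically on the other side, so $\xcal(\psi)$ is a homeomorphism. For the embedding case, factor $\psi$ as $X_1\xrightarrow{\psi_0}\psi(X_1)\hookrightarrow X_2$ where $\psi_0$ is a homeomorphism onto its image; since $\xcal$ is a functor and $\xcal(\psi_0)$ is a homeomorphism by the previous sentence, it suffices to show that the map $\xcal(\iota)$ induced by the inclusion $\iota$ of an (inverse-closed, spectral) subspace into $X_2$ is injective with continuous inverse on its image. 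Concretely, $\xcal(\iota)(C)=\chiusinv_{X_2}(C)$, so I must check that distinct inverse-closed subsets of $\psi(X_1)$ have distinct inverse-closures in $X_2$, and that $\ucal$-basic open sets pull back correctly; the identity $(\xcal(\iota))^{-1}(\ucal(\Omega))=\ucal(\iota^{-1}(\Omega))$ from the proof of Proposition \ref{prop:appl} gives continuity of the inverse on the image once injectivity is known, and injectivity follows because $C=\chiusinv_{X_2}(C)\cap\psi(X_1)$ for inverse-closed $C\subseteq\psi(X_1)$ (the inverse topology on a subspace being induced by the ambient one, since $\psi(X_1)$ is inverse-closed in $X_2$).

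For part (2), the existence of $\psi$ is the crux. Given a homeomorphism $\Psi:\xcal(X_1)\to\xcal(X_2)$, I would show $\Psi$ carries the image $\varphi_1(X_1)$ onto $\varphi_2(X_2)$, so that $\psi:=\varphi_2^{-1}\circ\Psi\circ\varphi_1$ is a well-defined homeomorphism $X_1\to X_2$. The key is to characterize $\varphi_i(X_i)$ inside $\xcal(X_i)$ purely topologically (or order-theoretically), in a way preserved by homeomorphisms. One workable characterization: $\varphi_1(X_1)$ consists exactly of those $C\in\xcal(X_1)$ that are \emph{irreducible} points in the sense that $C$ is join-irreducible in the specialization order — more precisely, $\{y\}^{\mbox{\tiny\texttt{gen}}}$ is, among nonempty inverse-closed sets, exactly one that cannot be written as the union (= supremum in $\xcal(X_1)$, by Theorem \ref{embedding}(2) and the fact that unions of inverse-closed sets, when inverse-closed, are the suprema) of two strictly smaller inverse-closed sets, equivalently $C$ has a unique maximal point and that point generizes all of $C$. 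Since suprema and the order are topological invariants (the order $\leq_{\xcal}$ is the specialization order, preserved by homeomorphism; suprema are order-theoretic), $\Psi$ maps this distinguished subset onto the corresponding one. Then $\Psi\circ\varphi_1=\varphi_2\circ\psi$ by construction, and Proposition \ref{prop:restrizPsi}(2) applied to both $\Psi$ and $\Psi^{-1}$ forces $\xcal(\psi)\leq\Psi$ and $\xcal(\psi^{-1})\leq\Psi^{-1}$; composing and using functoriality yields $\Psi=\xcal(\psi)$. Uniqueness of $\psi$ is immediate: any $\psi'$ with $\Psi\circ\varphi_1=\varphi_2\circ\psi'$ satisfies $\varphi_2\circ\psi'=\varphi_2\circ\psi$, and $\varphi_2$ is injective.

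Part (3) is then a one-line consequence: if $X_1\cong X_2$ via $\psi$, then $\xcal(X_1)\cong\xcal(X_2)$ via $\xcal(\psi)$ by part (1); conversely a homeomorphism $\xcal(X_1)\cong\xcal(X_2)$ yields, by part (2), a homeomorphism $X_1\cong X_2$. The main obstacle I anticipate is the intrinsic characterization of $\varphi_i(X_i)$ in part (2): I need a description of the principal points $\{y\}^{\mbox{\tiny\texttt{gen}}}$ that is manifestly invariant under homeomorphisms of $\xcal(X_i)$ and does not secretly appeal to the ambient set $X_i$. The join-irreducibility description above should work — every inverse-closed $Y$ is the union of the $\{y\}^{\mbox{\tiny\texttt{gen}}}$ for $y\in Y$, this union is directed-ish enough that $Y$ fails to be join-irreducible precisely when $Y$ is not principal — but making this airtight (in particular, checking that an arbitrary nonempty inverse-closed set with more than one maximal point, or with one maximal point not generizing everything, really is a supremum of two strictly smaller inverse-closed sets in $\xcal(X_i)$) is where the real work lies, and it is essentially the content already exploited in Proposition \ref{homeo}.
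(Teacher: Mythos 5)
Your plan is essentially viable, and parts of it genuinely diverge from the paper. For (1) the paper argues directly from the square $\xcal(\psi)\circ\varphi_1=\varphi_2\circ\psi$ (plus the observation $C=\xcal(\psi)(\psi^{-1}(C))$ for surjectivity in the homeomorphism case), whereas you factor $\psi$ through its image and study $\xcal$ of an inclusion; this is longer but self-contained. For (2), your order-theoretic characterization of $\varphi_i(X_i)$ as the join-irreducible elements of $\xcal(X_i)$ is exactly the paper's Claim~1 (the points of $\varphi(X)$ are the irreducible closed subsets of $X^{\mbox{\tiny\texttt{inv}}}$) combined with its Claim~2 (a homeomorphism, being an order isomorphism, preserves them, binary joins in $\xcal$ being unions). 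Where you really differ is the endgame: the paper checks by hand that the induced $\psi$ is continuous and open (its Claim~3) and then computes $\Psi(C)$ through the irreducible components of $C$, while you obtain $\psi$ as a homeomorphism at once (as the restriction of $\Psi$ between the two distinguished subsets) and deduce $\Psi=\xcal(\psi)$ from Proposition \ref{prop:restrizPsi}(2) applied to $(\Psi,\psi)$ and $(\Psi^{-1},\psi^{-1})$: indeed $\xcal(\psi^{-1})(\Psi(C))\subseteq\Psi^{-1}(\Psi(C))=C$, and applying the order-preserving map $\xcal(\psi)$ together with functoriality yields $\Psi(C)\subseteq\xcal(\psi)(C)\subseteq\Psi(C)$. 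This squeeze is correct and noticeably cleaner than the paper's component computation; your uniqueness argument via injectivity of $\varphi_2$ is also fine.

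Two points need repair. First, in (1) you assert that $\psi(X_1)$ is inverse-closed in $X_2$: this is false in general; for instance, the inclusion of the closed point into the two-point space $\spec(V)$, with $V$ a discrete rank-one valuation domain, is a spectral topological embedding whose image is not closed under generizations. What is true, and is all you need, is that $\psi(X_1)$ is closed in the constructible topology of $X_2$ (being the image of a spectral map, as in the proof of Theorem \ref{sigma}(1)); for such a subspace the quasi-compact open sets are exactly the traces $\Omega\cap\psi(X_1)$ of quasi-compact open sets $\Omega$ of $X_2$, whence the inverse closure computed inside $\psi(X_1)$ of any $Z\subseteq\psi(X_1)$ equals $\psi(X_1)\cap\chiusinv(Z)$ (closure taken in $X_2$). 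This gives both your injectivity identity $C=\chiusinv(C)\cap\psi(X_1)$ and the fact that every basic open set of $\xcal(\psi(X_1))$ has the form $\xcal(\iota)^{-1}(\ucal(\Omega))$, which you need for openness onto the image. Second, the step you defer in (2) --- that a non-principal element of $\xcal(X_1)$ is the union of two strictly smaller elements --- is not where the real work lies and does not require the maximal-point argument of Proposition \ref{homeo}: it is immediate from the sobriety of $X_1^{\mbox{\tiny\texttt{inv}}}$ (Hochster), since an irreducible closed subset of $X_1^{\mbox{\tiny\texttt{inv}}}$ has a generic point and hence lies in $\varphi_1(X_1)$, while a reducible one is by definition a union of two proper nonempty inverse-closed subsets. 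With these adjustments your argument goes through.
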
 
\begin{proof}
(1) By Proposition \ref{prop:appl}, $\xcal(\psi)\circ\varphi_1=\varphi_2\circ\psi$. Since $\varphi_1$ and $\varphi_2$ are topological embeddings, if also $\psi$ is an embedding so is $\varphi_2\circ\psi$, and thus so is $\xcal(\psi)\circ\varphi_1$; hence also $\xcal(\psi)$ is an embedding. If $\psi$ is an homemorphism, and $C\in\xcal(X_2)$, then $C=\xcal(\psi)(\psi^{-1}(C))$, so that $\xcal(\psi)$ is surjective and thus an homeomorphism.

(2)    We start by showing the following.

\textbf{Claim 1.} \textit{Let $X$ be a spectral space and let $\varphi:X\rightarrow \xcal(X)$ be the canonical embedding. Then, $\varphi(X)$ is precisely the set of all the irreducibile closed subset of $X$, endowed with the inverse topology.} 

As a matter of fact,   it is well known that the space $X^{\mbox{\tiny{{\texttt{inv}}}}}$, i.e., the set $X$ endowed with the inverse topology,  is itself a spectral space \cite[Proposition 8]{ho}, and thus any irreducible closed subspace $C$ of $X^{\mbox{\tiny{{\texttt{inv}}}}}$ has a unique generic point, say $x$, that is $C=\chius^{\mbox{\tiny{{\texttt{inv}}}}}(\{x\})=\{x\}^{\mbox{\tiny{{\texttt{gen}}}}}=\varphi(x)$. 
On the orther hand, it is trivial that $\varphi(X)$ is contained in  the set of all the irreducibile closed subset of $X^{\mbox{\tiny{{\texttt{inv}}}}}$.

\textbf{Claim 2.} \textit{Assume that  $\Psi:\xcal
(X_1)\rightarrow \xcal(X_2)$ is a homeomorphism. Let $C$ be an irreducible and closed subspace of $X_1^{\mbox{\tiny{{\emph{\texttt{inv}}}}}}$. Then $\Psi( C)$ is an irreducible (and closed) subset of $X_2^{\mbox{\tiny{{\emph{\texttt{inv}}}}}}$.}  

Let $D' , D'' \in \xcal(X_2)$ be such that $D'\cup D''=\Psi( C)$. Since $\Psi$ is a homeomorphism is also an isomorphism of ordered sets, 
 we see that  $C=\Psi^{-1}(D')\cup \Psi^{-1}(D'')$. Since $C$ is irreducible, we have either $C=\Psi^{-1}(D')$ or $C=\Psi^{-1}(D'')$, and thus either $\Psi( C)=D'$ or $\Psi( C)=D''$. 

 Now, fix a point $x\in X_1$. By Claim  2, the set $\Psi(\{x\}^{{\mbox{\tiny{{\texttt{gen}}}}}})$ 
 is irreducible in $X_2$, thus by Claim 1 there is a unique point $x_\Psi\in X_2$ such that 
 $\{x_\Psi \}^{\mbox{\tiny{{\texttt{gen}}}}}=
 \Psi(\{x\}^{\mbox{\tiny{{\texttt{gen}}}}})$. 
Thus $\Psi$ induces naturally a map $\psi:X_1\longrightarrow X_2$ by setting $\psi(x):=x_\Psi$, for any $x\in X$. Clearly, $\varphi_2\circ \psi=\Psi\circ \varphi_1$. 
Next, we want to who that $\psi:X_1\rightarrow X_2$ is homeomorphism.

\textbf{Claim 3.} \textit{Assume that  $\Psi:\xcal
(X_1)\rightarrow \xcal(X_2)$ is a homeomorphism. Let $\Omega$ be a quasi-compact open subspace of $X_1$ (in particular, $\Omega \in \xcal
(X_1)$).  Then $\Psi( \Omega)$ is a quasi-compact open subspace of $X_2$.}  

Note that  the quasi-compact open subspace $\ucal(\Omega)$ of $\xcal
(X_1)$ coincides with $\{\Omega\}^{{\mbox{\tiny{{\texttt{gen}}}}}}$ (where the generizations are taken in $\xcal
(X_1))$. Since $\Psi$ is a homeomorphism, then $\Psi(\ucal(\Omega)) =\Psi(\{\Omega\}^{{\mbox{\tiny{{\texttt{gen}}}}}}) = 
\{\Psi(\Omega)\}^{{\mbox{\tiny{{\texttt{gen}}}}}}$ is a quasi-compact open set of 
$\xcal(X_2)$ which is irreducible as inverse-closed subspace of 
$\xcal(X_2)$. 
In order to show that $\Psi( \Omega)$ is a quasi-compact open subspace of $X_2$,
 we observe that $ \Psi(\{\Omega\}^{{\mbox{\tiny{{\texttt{gen}}}}}})=
 \Psi(\ucal(\Omega))=
 \bigcup\{ \ucal(V_i) \mid 1\leq i \leq n\} =
  \bigcup\{ \{V_i\}^{{\mbox{\tiny{\emph{\texttt{gen}}}}}} \mid 1\leq i \leq n\}$, 
  for a finite family of quasi-compact open subspaces   
  $\{  V_i  \mid 1\leq i \leq n\}$ of $X_2$. 
  Therefore, $ \Psi(\{\Omega\}^{{\mbox{\tiny{{\texttt{gen}}}}}}) =  
  \{V_{\tilde{i}}\}^{{\mbox{\tiny{{\texttt{gen}}}}}}$
   for some $\tilde{i}$ and so $ \Psi(\Omega) =  V_{\tilde{i}}$.

In order to prove that $\psi: X_1 \rightarrow X_2$ is a homeomorphism, we start by showing that $\psi$ is continuous.
Let $V\subseteq X_2$ be a quasi-compact open. We claim that $\psi^{-1}(V)=\Psi^{-1}(V)$, where $\Psi^{-1}(V)\in\xcal(X_1)$ is a quasi-compact open subspace of $X_1$, since $\Psi$ is a homeomorphism. 
Moreover, $\ucal(\Psi^{-1}(V)) = \{\Psi^{-1}(V)\}^{{\mbox{\tiny{{\texttt{gen}}}}}} = \Psi^{-1}(\{(V)\}^{{\mbox{\tiny{{\texttt{gen}}}}}}) =
 \Psi^{-1}(\ucal(V)) $.
Now, take a point $x\in  X_1$. Then
$$
\begin{array}{rl}
\psi(x)\in V \Leftrightarrow &  \{x_\psi\}^{{\mbox{\tiny{{\texttt{gen}}}}}}\subseteq V \, \Leftrightarrow \, \Psi(\{x\}^{{\mbox{\tiny{{\texttt{gen}}}}}})\in \ucal(V)\\
\Leftrightarrow & \{x\}^{{\mbox{\tiny{{\texttt{gen}}}}}}\in \Psi^{-1}(\ucal(V)) \Leftrightarrow x\in \Psi^{-1}(V),
\end{array}
$$
i.e., $\psi^{-1}(V) = \Psi^{-1}(V)$.

Now, we show that  $\psi: X_1 \rightarrow X_2$ is open.
Let $\Omega$ be a quasi-compact open subspace of $X_1$. By Claim 3, $\Psi(\Omega)$ is a quasi-compact open subspace of $X_2$ and, obviously, $\Omega=\Psi^{-1}(\Psi(\Omega))$. Moreover, by the previous observation, $\Psi^{-1}(\Psi(\Omega))= \psi^{-1}(\Psi(\Omega))$ and, since $\psi$ is bijective, $\psi(\Omega)=\Psi(\Omega)$.

  Finally, we show that $\xcal(\psi)=\Psi$. Take a set $C\in \xcal(X_1)$. Since $\psi$ is a homeomorphism, it is also a homeomorphism between $X_1^{\rm inv}$ and $X_2^{{\mbox{\tiny{{\texttt{inv}}}}}}$, and in particular it is a closed map (with respect to the inverse topologies).
Therefore, it suffices to prove that $\xcal(\psi)( C)  = 
\psi( C)^{{\mbox{\tiny{{\texttt{gen}}}}}} = \psi( C)$
coincides with $\Psi(C)$. 
Let $\{C_i \mid i\in I \}$ be the collection of the irreducible (and closed) components of $C$  in $X_1^{\rm inv}$. By Claim 1, for any $i\in I$, let $x_i\in X_1$ be the unique generic point of $C_i$ in $X_1^{{\mbox{\tiny{{\texttt{inv}}}}}}$. Keeping in mind that both $\psi$ and $\Psi$ are also isomorphism of partially ordered sets (orderings induced by the topologies), we have
$$
\begin{array}{rl}
\Psi(C)= & \hskip -5 pt \Psi(\sup\{C_i\mid i\in I\})=\sup\{\Psi(C_i)\mid i\in I\}= \\ = & \hskip -5 pt 
 \sup\{\Psi(\{x_i\}^{{\mbox{\tiny{{\texttt{gen}}}}}}) \mid i\in I\}= \\
=&\hskip -5 pt\sup \{ \{\psi(x_i)\}^{{\mbox{\tiny{{\texttt{gen}}}}}} \mid   i\in I \} 
= \bigcup \{\{\psi(x_i)\}^{{\mbox{\tiny{{\texttt{gen}}}}}} \mid \ i\in I\} =
\\ =&\hskip -5 pt
\psi(\bigcup \{\{x_i\}^{{\mbox{\tiny{{\texttt{gen}}}}}}\mid i\in I\})=\psi(\bigcup\{C_i \mid i\in I\})=\psi(C) \,.
\end{array}
$$
The proof of (2) is now complete.
 Part (3) is an immediate consequence of statements (1) and (2). 
\end{proof}

\medskip

 It is not difficult to see that $\varphi (= \varphi_X): X \rightarrow \xcal(X)$ does not provide a unique way  for embedding a spectral space $X$ in a larger ``natural'' spectral space.  However, $\varphi$  satisfies an universal-like property. 

We start with a lemma.
 
\begin{lemma}\label{sup-spectral}
Let $Z$ be a spectral space and let $Y$ be a closed set   in the constructible topology  of $Z$; in particular, $Y$ is a spectral space.  Assume that the map 
${\large\mbox{$\Sigma$}}_{Y,Z}\colon\xcal(Y) \rightarrow Z$, 
$C  \mapsto {\textstyle\sup}_Z(C)$, for each $C \in \xcal(Y)$,
is well-defined. Then, the following statements hold.
\begin{enumerate}
\item[{\rm (1)}] If each point of $Z$ has a local basis consisting of sets of the form $\{\omega \}^{{\mbox{\tiny{\emph{\texttt{gen}}}}}}$, for suitable elements 
$\omega \in Z$, then $\Sigma_{Y,Z}$ is continuous, spectral  and open onto its image.
\item[{\rm (2)}] If $Y=Z$, then the converse hold.
\end{enumerate}
\end{lemma}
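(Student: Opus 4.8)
The statement splits into the forward direction (1) -- from a local-basis hypothesis on $Z$ we must derive continuity, spectrality, and openness onto the image of $\Sigma_{Y,Z}$ -- and the converse (2) when $Y=Z$, where from continuity/spectrality/openness of $\Sigma_{Z,Z}$ we recover the local-basis property. I would organize the argument around the elementary identity describing when $\sup_Z(C)\in\Omega$ for a quasi-compact open $\Omega\subseteq Z$, and around the fact that the canonical embedding $\varphi_Z\colon Z\hookrightarrow\xcal(Z)$ identifies $z$ with $\{z\}^{\texttt{gen}}$ and sends quasi-compact open $\Omega$ to $\ucal(\Omega)=\{\Omega\}^{\texttt{gen}}$ inside $\xcal(Z)$ (as already used in the proof of Proposition~\ref{psi}).

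For (1): first I would observe that $\sup_Z(C)\in\Omega$ exactly when $C\subseteq\{\sup_Z(C)\}^{\texttt{gen}}\subseteq\Omega$. The nontrivial inclusion to exploit is: if $\omega:=\sup_Z(C)$ and $\omega\in\Omega$, then by the local-basis hypothesis there is $\omega'\in Z$ with $\omega\in\{\omega'\}^{\texttt{gen}}\subseteq\Omega$, i.e. $\omega'\le\omega$ and $\{\omega'\}^{\texttt{gen}}\subseteq\Omega$; I then want to replace $\omega'$ by a quasi-compact open $\Omega'$ with $\omega\in\Omega'\subseteq\{\omega'\}^{\texttt{gen}}$ is not quite right -- instead I would use Lemma~\ref{uno}(2): $\{\omega'\}^{\texttt{gen}}$ is inverse-closed hence saturated quasi-compact, and is contained in the open set $\Omega$, so by Lemma~\ref{uno}(1) there is a quasi-compact open $\Omega'$ with $\{\omega'\}^{\texttt{gen}}\subseteq\Omega'\subseteq\Omega$; since $\omega\in\{\omega'\}^{\texttt{gen}}$ we get $\omega\in\Omega'$. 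Now $C\subseteq\{\omega\}^{\texttt{gen}}$ forces... actually the cleanest route: since $C$ is quasi-compact and $\sup_Z(C)\in\Omega$, I want a single quasi-compact open $\Omega'$ with $C\subseteq\Omega'$ and $\sup_Z(\Omega')$... This is where care is needed. I would argue $\Sigma_{Y,Z}^{-1}(\Omega)=\{C\in\xcal(Y)\mid\sup_Z(C)\in\Omega\}$ and show this equals $\bigcup\ucal_Y(\Omega'\cap Y)$ over quasi-compact open $\Omega'\subseteq Z$ with the property that $\sup_Z$ of every point of $\Omega'\cap Y$ lands in $\Omega$; using the local basis one shows every $C$ in the left side is covered, giving openness of the preimage (continuity). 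For spectrality I need $\Sigma_{Y,Z}^{-1}(\Omega)$ quasi-compact: here I would show $\Sigma_{Y,Z}^{-1}(\Omega)=\ucal_Y(W\cap Y)$ for a \emph{single} quasi-compact open $W$, exploiting that $\{\sup_Z(C)\}^{\texttt{gen}}=\bigcup_{c\in C}\{c\}^{\texttt{gen}}$ and that $C\subseteq\Omega$ at the level of $Z$ is detected by quasi-compact opens of $Z$ restricted to $Y$ -- essentially $\sup_Z(C)\in\Omega\iff C\subseteq\Omega\cap Y$ because $\Omega$ being open and $\sup_Z(C)\in\Omega$ is equivalent, via a cofinality/local-basis argument, to each generization chain meeting $\Omega$. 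For openness onto the image, I would take a basic open $\ucal_Y(\Omega'\cap Y)$ of $\xcal(Y)$ and show $\Sigma_{Y,Z}(\ucal_Y(\Omega'\cap Y))=\Omega'\cap\Ima(\Sigma_{Y,Z})$, using that for $z$ in the image, $z=\sup_Z(C)$ with $C\subseteq\Omega'$ iff $\{z\}^{\texttt{gen}}\cap Y\subseteq\Omega'$, again invoking the local basis of $Z$ at $z$.

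For (2), with $Y=Z$: note $\Sigma_{Z,Z}\circ\varphi_Z=\ude_Z$ since $\sup_Z(\{z\}^{\texttt{gen}})=z$, so $\varphi_Z$ is a section of $\Sigma_{Z,Z}$ and $Z$ is a retract of $\xcal(Z)$. Given $z\in Z$ and an open neighborhood $U\ni z$, I would pull back: $\Sigma_{Z,Z}^{-1}(U)$ is open in $\xcal(Z)$ and contains $\varphi_Z(z)=\{z\}^{\texttt{gen}}$, so it contains a basic $\ucal(\Omega)$ with $\{z\}^{\texttt{gen}}\subseteq\Omega$, i.e. $z\in\Omega$; applying $\Sigma_{Z,Z}$ and using that it is open onto its image -- which is all of $Z$ since $\Sigma_{Z,Z}\circ\varphi_Z=\ude_Z$ -- gives $\Sigma_{Z,Z}(\ucal(\Omega))$ open in $Z$, and $\Sigma_{Z,Z}(\ucal(\Omega))=\{\sup_Z(C)\mid C\subseteq\Omega\}$. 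It remains to recognize this open set as $\{\omega\}^{\texttt{gen}}$ for a suitable $\omega$: the natural candidate is $\omega=\sup_Z(\Omega)$ (which exists because $\Omega=\Sigma$-image makes sense only if well-defined -- here I should instead take $\omega$ to be the image of the top element of $\ucal(\Omega)$ under $\Sigma$, noting $\ucal(\Omega)$ has greatest element $\Omega$ itself, so $\omega:=\sup_Z(\Omega)$ and $\Sigma_{Z,Z}(\ucal(\Omega))=\{C'\mid C'\le\Omega\text{ in }\xcal(Z)\}$ maps onto $\{\omega'\le\omega\}=\{\omega\}^{\texttt{gen}}$ by monotonicity and surjectivity via $\varphi_Z$). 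Then $z\in\{\omega\}^{\texttt{gen}}\subseteq U$ up to shrinking, giving the desired local basis.

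\textbf{Main obstacle.} The delicate point throughout is converting the pointwise local-basis hypothesis on $Z$ into a statement about entire inverse-closed sets $C$, specifically establishing the equivalence $\sup_Z(C)\in\Omega\iff C\subseteq\Omega$ (for $\Omega$ quasi-compact open in $Z$), which is what makes $\Sigma_{Y,Z}^{-1}(\ucal\text{-type set})$ itself a $\ucal$-type set and hence both open and quasi-compact. The inclusion $C\subseteq\Omega\Rightarrow\sup_Z(C)\in\Omega$ needs the local basis (it can fail otherwise, as the supremum can "escape" to a point with no quasi-compact-open neighborhood inside $\Omega$); the reverse inclusion $\sup_Z(C)\in\Omega\Rightarrow C\subseteq\Omega$ uses that $\Omega$ is closed under generizations (quasi-compact opens need not be, so really one uses $\{\sup_Z C\}^{\texttt{gen}}=\bigcup_{c}\{c\}^{\texttt{gen}}\supseteq C$ together with $\{\sup_Z C\}^{\texttt{gen}}\subseteq\Omega$, the latter from the local basis). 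Pinning down exactly which compactness/cofinality facts about suprema in spectral spaces are needed -- and in which of the two directions the local-basis hypothesis is the \emph{essential} ingredient -- is the crux, and is precisely why the converse in (2) recovers that hypothesis.
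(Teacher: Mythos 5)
Your plan hinges on two assertions that are false: the equivalence $\sup_Z(C)\in\Omega\iff C\subseteq\Omega\cap Y$ for an \emph{arbitrary} quasi-compact open $\Omega$ of $Z$, and the identity $\{\sup_Z(C)\}^{\mathrm{gen}}=\bigcup_{c\in C}\{c\}^{\mathrm{gen}}$ (the right-hand side is just $C$ itself, since $C$ is inverse-closed). These are exactly what your spectrality step (``$\Sigma^{-1}(\Omega)=\ucal_Y(W\cap Y)$ for a single quasi-compact open $W$'') and your openness step (``$\Sigma(\ucal_Y(\Omega'\cap Y))=\Omega'\cap\mathrm{Im}(\Sigma)$'') rest on. Concretely, take $Y=Z=\smod_k(k^2)$ for a field $k$ (the setting of Proposition \ref{prop:intersez-submod}: the local-basis hypothesis holds, the order is reverse inclusion, and $\sup=\bigcap$), let $L_1\neq L_2$ be two lines, $0\neq f_i\in L_i$, and $\Omega:=\mathtt{B}_{f_1}\cup\mathtt{B}_{f_2}=\{L_1,L_2,k^2\}$, a quasi-compact open set. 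Then $C:=\Omega\in\xcal(Z)$ satisfies $C\subseteq\Omega$ but $\sup_Z(C)=L_1\cap L_2=(0)\notin\Omega$; accordingly $\Sigma^{-1}(\Omega)=\ucal(\mathtt{B}_{f_1})\cup\ucal(\mathtt{B}_{f_2})\subsetneq\ucal(\Omega)$, and $\Sigma(\ucal(\Omega))$ contains $(0)\notin\Omega$, so your asserted equality for openness fails as well (indeed $\Sigma(\ucal(\Omega))=\{(0),L_1,L_2,k^2\}$ is not even relatively open, so openness cannot be checked basic-open-by-basic-open for arbitrary $\Omega$). You also have the two directions backwards: with the paper's order convention every open set \emph{is} closed under generizations, so $\sup_Z(C)\in\Omega\Rightarrow C\subseteq\{\sup_Z(C)\}^{\mathrm{gen}}\subseteq\Omega$ costs nothing, while the implication you say the local basis rescues, $C\subseteq\Omega\Rightarrow\sup_Z(C)\in\Omega$, fails even under that hypothesis; it holds only when $\Omega$ is one of the special sets $\{\omega\}^{\mathrm{gen}}$, because then $\omega$ is an upper bound of $C$ and $\sup_Z(C)\leq\omega$.

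This is precisely how the paper argues: for $V=\{\omega\}^{\mathrm{gen}}$ taken from a local basis one gets $\Sigma^{-1}(V)=\ucal_Y(V\cap Y)$ and $\Sigma(\ucal_Y(V\cap Y))=V\cap\Sigma(\xcal(Y))$, and continuity follows by refining an open $U\ni\sup_Z(C)$ first to such a $V\subseteq U$ and then to a quasi-compact open $\Omega'$ with $\sup_Z(C)\in\Omega'\subseteq V$, so that $C\in\ucal_Y(\Omega'\cap Y)\subseteq\Sigma^{-1}(U)$; spectrality then comes from covering a quasi-compact open $\Omega$ by finitely many such $\Omega'$, so that $\Sigma^{-1}(\Omega)$ is a \emph{finite union} of basic opens of $\xcal(Y)$, not a single one as you want; and the openness statement is obtained only through the special opens $\ucal_Y(V\cap Y)$, never for arbitrary $\ucal_Y(\Omega'\cap Y)$. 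Your part (2) is close to the paper's, but the final identification $\Sigma(\ucal(\Omega))=\{\sup_Z(\Omega)\}^{\mathrm{gen}}$ is again false in general and unnecessary: from $\Omega\in\ucal(\Omega)\subseteq\Sigma^{-1}(U)$ you get $\omega:=\sup_Z(\Omega)\in U$, and then $z\in\Omega\subseteq\{\omega\}^{\mathrm{gen}}\subseteq U$, the last inclusion simply because $U$ is generization-closed; only continuity of $\Sigma_{Z,Z}$ is used there.
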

\begin{proof} 
(1) For the sake of simplicity, set $\Sigma:=\Sigma_{Y,Z}$. Let $x\in Z$   and $V_x$ a basic open set of $Z$ containing $x$: then, we claim that $\Sigma^{-1}(V_x)=\ucal(V_x\cap Y)$ and that $\Sigma(\ucal(V_x\cap Y))=V_x\cap\Sigma(\xcal(Y))$. (Note that, since $Y$ is closed, with respect to the constructible topology, $V_x\cap Y$ is open in $Y$ and quasi-compact, and thus determines a basic open set of $\xcal(Y)$.) Indeed, take a point $K\in\Sigma^{-1}(V_x)$: then $k:=\sup_Z(K)\in V_x$, and thus $K\subseteq \{k \}^{{\mbox{\tiny{{\texttt{gen}}}}}}\subseteq V_x$. Since, clearly, $K\subseteq Y$, we have  $K\in \ucal(V_x\cap Y)$. Conversely, take a point $K\in \ucal(V_x\cap Y)$: in particular, $K\subseteq  V_x$, and thus we have $k:=\sup_Z(K)\leq x$, or equivalently $k\in V_x$. Hence, $K\in\Sigma^{-1}(V_x)$. This reasoning also shows the second equality.

The hypotheses on $Z$ now imply that $\Sigma$ is continuous, spectral and open onto its image.

(2)   Now, let $\Sigma:=\Sigma_{Z,Z}$. 
Take a point $z\in Z$ and an open neighborhood $U$ of $Z$. Since $z=\Sigma(\{z\}^{{\mbox{\tiny{{\texttt{gen}}}}}})$ and $\Sigma$ is continuous, there is a quasi-compact open subspace $\Omega$ of $Z$ such that $\{z\}^{{\mbox{\tiny{{\texttt{gen}}}}}}\in \ucal(\Omega)$ (i.e., $z\in\Omega$) and $\Sigma(\ucal(\Omega))\subseteq U$. Since $\Omega\in \ucal(\Omega)$, the last statement implies $\omega:=\sup_Z(\Omega)\in U$. It follows $z\in \Omega\subseteq \{\omega\}^{{\mbox{\tiny{{\texttt{gen}}}}}}\subseteq U$. 
\end{proof}

\begin{oss} Let $Z$ be a spectral space and let $\varphi_Z: Z\rightarrow \boldsymbol{\mathcal{X}}(Z)$
 be the spectral embedding  introduced in Theorem \ref{embedding}(3). 
 Under the assumptions and the equi\-valent conditions of Lemma \ref{sup-spectral}, the map $\Sigma_Z$ (= $\Sigma_{Z,Z}$)  gives rise to a topological retraction, since $\Sigma_Z \circ\ \varphi_Z$ is the identity map   on $Z$.
\end{oss}

\smallskip

We say that a map $f:X\rightarrow Y$ of spectral spaces is \emph{sup-preserving} if, whenever $F$ is a finite subset of $X$ and there exists $\sup_X(F)$, then there exists $\sup_Y(f(F))$ and $f(\sup_X(F))=\sup_Y(f(F))$.

\smallskip

\begin{teor} \label{sigma}
Let $X$ be a spectral space and let $\varphi (= \varphi_X):X\rightarrow\xcal(X)$ be the canonical spectral embedding (Theorem \ref{embedding}(3)). Let $Z$ be a spectral space, and suppose that the map {\large\mbox{$\Sigma$}} (= {\large\mbox{$\Sigma$}}${_{\lambda(X),Z})}:\xcal(\lambda(X))\rightarrow Z$, introduced in Lemma \ref{sup-spectral}, is (well-defined and) spectral. Let $\lambda:X\longrightarrow Z$ be a spectral map.
\begin{enumerate}
\item[{\rm (1)}]  There is a sup-preserving spectral map ${\boldsymbol\lambda^\sharp}:\xcal(X)\rightarrow Z$, defined by setting ${\boldsymbol\lambda^\sharp}(C):=\sup_Z(\lambda(C)^{{\mbox{\tiny{\emph{\texttt{gen}}}}}})$, for each $C\in\xcal(X)$, such that ${\boldsymbol\lambda^\sharp}\circ \varphi =\lambda$.
\item[{\rm (2)}]  If $\boldsymbol{\Lambda}:\xcal(X)\rightarrow Z$ is a spectral map such that ${\boldsymbol\Lambda}\circ \varphi =\lambda$, then ${\boldsymbol\lambda^\sharp}(K)\leq\boldsymbol{\Lambda}(K)$ for every $K\in\xcal(X)$ (where $\leq$ is the order induced on $Z$ by the topology).
\item[{\rm (3)}]  If, moveover, ${\boldsymbol\Lambda}$ is sup-preserving, then ${\boldsymbol\Lambda}={\boldsymbol\lambda^\sharp}$.
\end{enumerate}
\end{teor}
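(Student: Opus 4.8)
The plan is to deduce all three statements from the functoriality of $\xcal$ (Proposition~\ref{prop:appl}) together with the elementary observation that passing from a subset of a poset to its closure under generizations changes neither its set of upper bounds nor its supremum; the uniqueness asserted in part~(3) will then come from a directed-supremum argument.

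For part~(1), I would first set $W := \lambda(X) \subseteq Z$. Every spectral map is continuous for the constructible topologies, and $X^{\mbox{\tiny\texttt{cons}}}$ is quasi-compact while $Z^{\mbox{\tiny\texttt{cons}}}$ is Hausdorff, so $W$ is closed in $Z^{\mbox{\tiny\texttt{cons}}}$; hence $W$, with the subspace topology, is a spectral space whose quasi-compact open subspaces are exactly the traces $\Omega \cap W$ of the quasi-compact open subspaces $\Omega$ of $Z$, and the corestriction $\lambda_0 \colon X \rightarrow W$ of $\lambda$ is again spectral (indeed $\lambda_0^{-1}(\Omega \cap W) = \lambda^{-1}(\Omega)$). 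By Proposition~\ref{prop:appl}, $\xcal(\lambda_0) \colon \xcal(X) \rightarrow \xcal(W)$ is a spectral map, with $\xcal(\lambda_0)(C) = \chiusinv(\lambda(C))$ (inverse-closure computed in $W$). Since for every $S \subseteq Z$ the sets $S$, $S^{\mbox{\tiny\texttt{gen}}}$, and the inverse-closure of $S$ in $W$ all have the same upper bounds in $Z$, one obtains $\Sigma(\xcal(\lambda_0)(C)) = \sup_Z \lambda(C) = {\boldsymbol\lambda^\sharp}(C)$ for every $C \in \xcal(X)$; in particular ${\boldsymbol\lambda^\sharp}$ is well defined (the hypothesis that $\Sigma$ is well defined guarantees that $\sup_Z \lambda(C)$ exists), and being the composite $\Sigma \circ \xcal(\lambda_0)$ of two spectral maps it is itself spectral. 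The equality ${\boldsymbol\lambda^\sharp} \circ \varphi = \lambda$ is clear, because $\lambda$ is order-preserving, so $\lambda(x)$ is the greatest element of $\lambda(\{x\}^{\mbox{\tiny\texttt{gen}}})$ and therefore ${\boldsymbol\lambda^\sharp}(\varphi(x)) = \sup_Z \lambda(\{x\}^{\mbox{\tiny\texttt{gen}}}) = \lambda(x)$.

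To finish part~(1) and to prove part~(2), I would record first that the supremum in $\xcal(X)$ of a finite family $C_1, \dots, C_n$ is the union $C_1 \cup \cdots \cup C_n$, which is again nonempty and inverse-closed (a finite union of closed sets) and is clearly the smallest inverse-closed set containing each $C_i$, the order on $\xcal(X)$ being inclusion by Theorem~\ref{embedding}(2). Then ${\boldsymbol\lambda^\sharp}(C_1 \cup \cdots \cup C_n) = \sup_Z\big(\lambda(C_1) \cup \cdots \cup \lambda(C_n)\big)$, and since a finite union of sets has the same upper bounds as the collection of the suprema of the pieces, this equals $\sup_Z\{{\boldsymbol\lambda^\sharp}(C_1), \dots, {\boldsymbol\lambda^\sharp}(C_n)\}$; thus ${\boldsymbol\lambda^\sharp}$ is sup-preserving. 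For part~(2), let $\boldsymbol{\Lambda} \colon \xcal(X) \rightarrow Z$ be spectral with $\boldsymbol{\Lambda} \circ \varphi = \lambda$, and fix $K \in \xcal(X)$: for each $k \in K$ we have $\{k\}^{\mbox{\tiny\texttt{gen}}} = \varphi(k) \subseteq K$, i.e. $\varphi(k) \leq_{\xcal(X)} K$, and since $\boldsymbol{\Lambda}$ is continuous, hence order-preserving, $\lambda(k) = \boldsymbol{\Lambda}(\varphi(k)) \leq \boldsymbol{\Lambda}(K)$; so $\boldsymbol{\Lambda}(K)$ is an upper bound of $\lambda(K)$, whence ${\boldsymbol\lambda^\sharp}(K) = \sup_Z \lambda(K) \leq \boldsymbol{\Lambda}(K)$.

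For part~(3), the key point — which I also expect to be the main obstacle — is that a spectral map of spectral spaces preserves directed suprema. Directed suprema exist in any spectral space (a directed union of prime ideals is prime, so this holds in $\spec(R)$, hence in every spectral space), and every quasi-compact open subset is closed under the directed suprema of its points (again it suffices to check this in $\spec(R)$). Granting this: if $f$ is a spectral map and $D$ is directed with supremum $d$, then $f(d)$ is an upper bound of $f(D)$, and for any upper bound $u$ of $f(D)$ and any quasi-compact open neighbourhood $V$ of $u$, the set $V$ is closed under generizations and contains $f(D)$, so $D \subseteq f^{-1}(V)$, which is quasi-compact open, whence $d \in f^{-1}(V)$, i.e. $f(d) \in V$; as $V$ was arbitrary, $f(d) \leq u$ and so $f(d) = \sup f(D)$. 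Now both $\boldsymbol{\Lambda}$ and ${\boldsymbol\lambda^\sharp}$ are spectral, hence preserve directed suprema, and both preserve finite suprema (${\boldsymbol\lambda^\sharp}$ by part~(1), $\boldsymbol{\Lambda}$ by hypothesis); moreover they agree on $\varphi(X)$, hence on every finite union $\{k_1\}^{\mbox{\tiny\texttt{gen}}} \cup \cdots \cup \{k_n\}^{\mbox{\tiny\texttt{gen}}}$. Given $K \in \xcal(X)$, the family $\mathcal{D}_K$ of all such finite unions with $k_1, \dots, k_n \in K$ is directed and has supremum $\bigcup_{k \in K} \{k\}^{\mbox{\tiny\texttt{gen}}} = K$ in $\xcal(X)$, so $\boldsymbol{\Lambda}(K) = \sup_Z \boldsymbol{\Lambda}(\mathcal{D}_K) = \sup_Z {\boldsymbol\lambda^\sharp}(\mathcal{D}_K) = {\boldsymbol\lambda^\sharp}(K)$, which is (3). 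Besides this directed-supremum input — not among the results quoted in the excerpt — the only delicate bookkeeping is whether generizations and inverse-closures in the identification ${\boldsymbol\lambda^\sharp} = \Sigma \circ \xcal(\lambda_0)$ are taken in $W$ or in $Z$; this is immaterial, since enlarging a subset of $Z$ to its generization-closure in $W$ or in $Z$ leaves its supremum in $Z$ unchanged.
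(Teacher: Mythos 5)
Your proposal is correct. For parts (1) and (2) it follows essentially the paper's own route: you factor $\boldsymbol{\lambda^\sharp}$ as $\Sigma\circ\xcal(\lambda_0)$ through the proconstructible image $\lambda(X)$, exactly as the paper does (the paper justifies the identification $\boldsymbol{\lambda^\sharp}=\Sigma\circ\xcal(\lambda)$ by citing Olberding and a maximal-point argument, while you justify it by the elementary and cleaner observation that a subset of $Z$, its closure under generizations, and its inverse closure in $\lambda(X)$ have the same upper bounds in $Z$), and your part (2) is the paper's argument verbatim. Part (3) is where you genuinely diverge. The paper fixes an open neighbourhood $V$ of $z:=\boldsymbol{\lambda^\sharp}(K)$, uses the continuity of $\Sigma$ together with Lemma \ref{sup-spectral} to produce $v\in V$ and a quasi-compact open neighbourhood of $z$ contained in $\{v\}^{\mbox{\tiny\texttt{gen}}}\subseteq V$, covers the quasi-compact set $K$ by finitely many quasi-compact opens $A_{x_i}$ with $\boldsymbol{\Lambda}(\ucal(A_{x_i}))$ inside that neighbourhood, and concludes $\boldsymbol{\Lambda}(K)\leq z$ from finite sup-preservation; combined with (2) this gives equality. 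You instead prove that spectral maps preserve directed suprema (via the fact, correctly checked on $\spec(R)$, that quasi-compact open sets are closed under directed suprema of their points), write $K$ as the directed supremum in $\xcal(X)$ of the finite unions $\{k_1\}^{\mbox{\tiny\texttt{gen}}}\cup\cdots\cup\{k_n\}^{\mbox{\tiny\texttt{gen}}}$ with $k_i\in K$, and conclude from the agreement of $\boldsymbol{\Lambda}$ and $\boldsymbol{\lambda^\sharp}$ on $\varphi(X)$ together with preservation of finite and directed sups. Your directed-supremum lemma is indeed an extra input not among the results quoted in the paper, but your proof of it is sound, and it buys a reusable, purely order-theoretic statement (spectral maps are Scott-continuous for the order used here) that makes (3) independent of the neighbourhood analysis of $Z$ and of any use of $\Sigma$ beyond what part (1) already requires; the paper's covering argument, by contrast, stays entirely within its quoted toolkit (Lemma \ref{sup-spectral}) at the cost of a more hands-on computation.
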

\begin{proof}
(1) Since $\lambda$ is a spectral map, it is also continuous when $X$ and $Z$ are both endowed with the constructible topology. In particular, since the constructible topology is both quasi-compact and Hausdorff, $\lambda$ is a closed map when considered in the constructible topology, and thus $\lambda(X)$ is a closed set in the constructible topology of $Z$; therefore, $\lambda(X)$ is a spectral space (so that $\xcal(\lambda(X))$ is well-defined) and the inclusion $j:\lambda(X)\hookrightarrow Z$ is a spectral map. In particular, it is possible to define the map 
${\mbox{\large$\Sigma$}} (= {\mbox{\large $\Sigma$}}{_{\lambda(X),Z}})$.

Let ${\boldsymbol\lambda^\sharp}:\xcal(X)\rightarrow Z$ be the map defined above.
 Keeping in mind \cite[Propositions 2.1 and 2.2]{olb2015} and the fact that any point of a quasi-compact T$_0$ space is $\leq$ than a maximal point of the space, we easily infer that
  ${\boldsymbol\lambda^\sharp}$ = {\large\mbox{$\Sigma$}}$ \circ \xcal(\lambda)$ and thus, by assumption, ${\boldsymbol\lambda^\sharp}$ is spectral. 
Moreover, both {\large\mbox{$\Sigma$}} and $\xcal(\lambda)$ are sup-preserving (easy verification), and thus ${\boldsymbol\lambda^\sharp}$ is sup-preserving; by definition it follows that ${\boldsymbol\lambda^\sharp} \circ \varphi=\lambda$. 

 (2) Suppose now that $\boldsymbol{\Lambda}:\xcal(X)\rightarrow Z$ is such that $\boldsymbol{\Lambda}\circ\varphi=\lambda$, and fix a $K\in\xcal(X)$.

For each $x\in K$ we have 
$\{x\}^{{\mbox{\tiny{{\texttt{gen}}}}}}\subseteq K$ and since, in particular,  $\boldsymbol{\Lambda}$ is continuous, it follows   that
\begin{equation*}
\lambda(x)=\boldsymbol{\Lambda}(\varphi(x))=\boldsymbol{\Lambda}(\{x\}^{{\mbox{\tiny{{\texttt{gen}}}}}})\leq \boldsymbol{\Lambda}(K).
\end{equation*}
 By definition, ${\boldsymbol\lambda^\sharp}(K)$ is equal to the supremum in $Z$ of the set $\lambda(K)^{{\mbox{\tiny{{\texttt{gen}}}}}}$; 
 moreover, it is equal to the supremum of $\lambda(K)$, since   if  $y\in\lambda(K)^{{\mbox{\tiny{{\texttt{gen}}}}}}$
  then $y\leq\lambda(x)$ for some $x\in K$. By the previous calculation, $\lambda(x)\leq\boldsymbol{\Lambda}(K)$ for every $x\in K$;   therefore, ${\boldsymbol\lambda^\sharp}(K)\leq\boldsymbol{\Lambda}(K)$, as claimed.

 (3) Suppose now that the spectral map $\boldsymbol{\Lambda}$ is sup-preserving, and as above let $K\in\xcal(X)$.
Take any open neighborhood $V$ of $z:={\boldsymbol\lambda^\sharp}(K)$ in $Z$. 
Then, by definition   and by (2),  in order to prove that $ {\boldsymbol\lambda^\sharp}(K)= \boldsymbol{\Lambda}(K)$, it suffices to show that $\boldsymbol{\Lambda}(K)\in V$. 
Since {\large\mbox{$\Sigma$}} is continuous, then there exist an element $v\in V$ and a quasi-compact open subspace $W$ of $Z$
 such that $z\in W\subseteq \{v\}^{{\mbox{\tiny{{\texttt{gen}}}}}}\subseteq V$, in view of Lemma \ref{sup-spectral}. 
 For any $x\in K$, we have
\begin{equation*}
\boldsymbol{\Lambda}(\{x\}^{{\mbox{\tiny{{\texttt{gen}}}}}})=\boldsymbol{\Lambda}(\varphi(x))=\lambda(x)\leq{\textstyle\sup_Z}(\lambda(K)^{{\mbox{\tiny{{\texttt{gen}}}}}})=z\in W\,.
\end{equation*}
Since $W$ is (in particular) closed under generizations, it follows $\boldsymbol{\Lambda}(\{x\}^{{\mbox{\tiny{{\texttt{gen}}}}}})$ $\in W$. 
Since $\boldsymbol{\Lambda}$ is continuous, there is a quasi-compact open subspace $A_x$ of $Z$ such that
 $\{x\}^{{\mbox{\tiny{{\texttt{gen}}}}}}\in \ucal(A_x)$ (i.e., $x\in A_x$) and $\boldsymbol{\Lambda}(\ucal(A_x))\subseteq W$. 
 Thus, $\bigcup_{x\in K}A_x\supseteq K$ and, since $K$ is (in particular) quasi-compact, there are finitely many elements
  $x_1, x_2,\z,x_n\in K$ such that $K\subseteq \bigcup_{i=1}^nA_{x_i}$. 
  Note that $\bigcup_{i=1}^nA_{x_i}\in \xcal(Z)$, since any $A_{x_i}\in\xcal(Z)$ is open and quasi-compact.
   Keeping in mind that $\boldsymbol{\Lambda}$ is continuous (and thus an 
   order-preserving map), we have
\begin{equation*}
\begin{array}{rl}
\boldsymbol{\Lambda}(K)\leq \boldsymbol{\Lambda}\left(\bigcup_{i=1}^nA_{x_i}\right)=& \hskip -5pt \boldsymbol{\Lambda}\left({\textstyle\sup}_{\xcal(Z)}(\{A_{x_i}\mid 1\leq i\leq n \})\right)\\ 
=&\hskip -5pt {\textstyle\sup}_{Z}\left(\{\boldsymbol{\Lambda}(A_{x_i})\mid 1\leq i\leq n  \}\right).
\end{array}
\end{equation*}
Since $\boldsymbol{\Lambda}(\ucal(A_{x_i}))\in W\subseteq \{v\}^{{\mbox{\tiny{{\texttt{gen}}}}}}$, for $1\leq i\leq n$, it follows that 
$\sup_Z(\{\boldsymbol{\Lambda}(A_{x_i})\mid 1\leq i\leq n  \})\in \{v\}^{{\mbox{\tiny{{\texttt{gen}}}}}}\subseteq V,$
and, a fortiori, $\boldsymbol{\Lambda}(K)\in V$. The proof is now complete. 
\end{proof}

\begin{oss}
The last part of the previous theorem  provides a slight generalization of \cite[Proposition 5.6]{lawson}. Indeed, under the equivalence between the construction $\xcal(X)$ (with the Zariski topology) and the Smyth powerdomain $\boldsymbol{\mathcal {Q}}(X)$ (with the upper Vietoris topology) estabilished in Proposition \ref{zariski-vietoris}, a sup-preserving map becomes an homomorphism of semilattices, and the map {\large\mbox{$\Sigma$}} coincides with the map $\bigwedge$  considered  in \cite{lawson}. The difference between Theorem \ref{sigma} and \cite[Proposition 5.6]{lawson} is that we do not require the map {\large\mbox{$\Sigma$}} to exist on the whole $\xcal(Z)$, but only on $\xcal(\lambda(X))$.
\end{oss} 

\begin{prop}
Preserve the notation and the hypotheses of  Theorem \ref{sigma}, and suppose that the map {\large\mbox{$\Sigma$}}  (= {\large\mbox{$\Sigma$}}$_{\lambda(X),Z}):
\xcal(\lambda(X))\rightarrow Z$ is injective.  Then, the following hold.
\begin{enumerate}
\item[{\rm (1)}]  ${\boldsymbol\lambda^\sharp}$ is a spectral embedding.
\item[{\rm (2)}]  If, furthermore, $z=\sup_Z\{\lambda(x)\mid x\in\lambda^{-1}(\{z\}^{{\mbox{\tiny{\emph{\texttt{gen}}}}}})\}$ for every $z\in Z$, and ${\boldsymbol\Lambda}:\xcal(X)\rightarrow Z$ is a spectral embedding such that ${\boldsymbol\Lambda}\circ\varphi=\lambda$, then ${\boldsymbol\Lambda}={\boldsymbol\lambda^\sharp}$.
\end{enumerate}
\end{prop}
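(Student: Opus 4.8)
The plan is to derive both parts from the factorization ${\boldsymbol\lambda^\sharp}=\Sigma\circ\xcal(\lambda)$ recorded in the proof of Theorem~\ref{sigma}(1) --- where $\xcal(\lambda)\colon\xcal(X)\to\xcal(\lambda(X))$ is the functorial map of Proposition~\ref{prop:appl} attached to the corestriction $\lambda'\colon X\to\lambda(X)$ of $\lambda$ --- together with Lemma~\ref{sup-spectral}(1), Theorem~\ref{sigma}(2), and the elementary fact that a topological embedding between $T_0$ spaces reflects the specialization order. For (1) I would check that each of the two factors is a spectral embedding. By Lemma~\ref{sup-spectral}(1) (whose hypotheses on $Z$ are those inherited from the setting of Theorem~\ref{sigma}), $\Sigma=\Sigma_{\lambda(X),Z}$ is spectral and open onto its image, and by assumption it is injective; hence $\Sigma$ is a spectral embedding. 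For the other factor, $\lambda'$ is a homeomorphism of $X$ onto $\lambda(X)$ --- equivalently, $\lambda$ is a topological embedding onto its image, the natural hypothesis here, which in particular makes $\lambda$ reflect specializations --- so Proposition~\ref{psi}(1) gives that $\xcal(\lambda)$ is a topological embedding. A composite of spectral embeddings being again a spectral embedding, ${\boldsymbol\lambda^\sharp}=\Sigma\circ\xcal(\lambda)$ is a spectral embedding.

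The step I expect to be the main obstacle is precisely the control of $\xcal(\lambda)$: forming generization closures inside $\lambda(X)$ could a priori identify two distinct inverse-closed subsets of $X$, so injectivity of ${\boldsymbol\lambda^\sharp}$ is not a formal consequence of injectivity of $\Sigma$ alone --- it is the order-reflecting behaviour of $\lambda$ that rules this out. Explicitly: if $\xcal(\lambda)(C)=\xcal(\lambda)(C')$, then for each $c\in C$ we have $\lambda(c)\leq\lambda(c')$ for some $c'\in C'$, hence $c\leq c'$ by order reflection, hence $c\in C'$ since $C'$ is inverse-closed (i.e., a down-set); thus $C\subseteq C'$, and symmetrically $C=C'$.

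For (2) I would argue by a two-sided inequality. Theorem~\ref{sigma}(2) already yields ${\boldsymbol\lambda^\sharp}(K)\leq\boldsymbol{\Lambda}(K)$ for every $K\in\xcal(X)$, so only the reverse inequality is needed. Fix $K\in\xcal(X)$ and put $z:=\boldsymbol{\Lambda}(K)$. By the additional hypothesis, $z=\sup_Z\{\lambda(x)\mid x\in\lambda^{-1}(\{z\}^{\mbox{\tiny{{\texttt{gen}}}}})\}$. For any $x$ in this set, $\lambda(x)\leq z$, i.e., $\boldsymbol{\Lambda}(\varphi(x))=\boldsymbol{\Lambda}(\{x\}^{\mbox{\tiny{{\texttt{gen}}}}})\leq\boldsymbol{\Lambda}(K)$; since $\boldsymbol{\Lambda}$ is a (spectral, hence topological) embedding, it reflects $\leq$, so $\varphi(x)\leq_{\xcal(X)}K$, that is, $\{x\}^{\mbox{\tiny{{\texttt{gen}}}}}\subseteq K$ by Theorem~\ref{embedding}(2), whence $x\in K$ and $\lambda(x)\in\lambda(K)$. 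Therefore $\{\lambda(x)\mid x\in\lambda^{-1}(\{z\}^{\mbox{\tiny{{\texttt{gen}}}}})\}\subseteq\lambda(K)$, and taking suprema in $Z$ --- using that $\sup_Z\lambda(K)=\sup_Z(\lambda(K)^{\mbox{\tiny{{\texttt{gen}}}}})={\boldsymbol\lambda^\sharp}(K)$, the two sets having the same maximal elements --- gives $z\leq{\boldsymbol\lambda^\sharp}(K)$, i.e., $\boldsymbol{\Lambda}(K)\leq{\boldsymbol\lambda^\sharp}(K)$. Combined with the previous inequality, $\boldsymbol{\Lambda}(K)={\boldsymbol\lambda^\sharp}(K)$ for all $K$, hence $\boldsymbol{\Lambda}={\boldsymbol\lambda^\sharp}$; note that this part uses only Theorem~\ref{sigma}(2) and the order-reflecting property of $\boldsymbol{\Lambda}$.
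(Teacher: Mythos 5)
Your proof of part (1) does not establish the statement as it is printed: you prove it under the additional hypothesis that $\lambda$ is a topological embedding onto $\lambda(X)$ (your ``natural hypothesis here''), whereas the hypotheses inherited from Theorem \ref{sigma} only ask that $\lambda$ be a spectral map and that $\Sigma_{\lambda(X),Z}$ be well defined and spectral; likewise, your appeal to Lemma \ref{sup-spectral}(1) for ``$\Sigma$ open onto its image'' quietly imports the local-basis hypothesis on $Z$, which is not among the stated assumptions either. Having said that, you have put your finger on exactly the weak point of the statement: injectivity of ${\boldsymbol\lambda^\sharp}=\Sigma\circ\xcal(\lambda)$ is not a consequence of injectivity of $\Sigma$, and some order-reflection assumption on $\lambda$ is genuinely needed. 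Indeed, take $X=\{a,b\}$ discrete and $Z=\{z_1<z_2\}$ a two-point chain (e.g., the spectrum of a DVR), with $\lambda(a)=z_1$ and $\lambda(b)=z_2$: all hypotheses hold, $\Sigma$ is even a homeomorphism of $\xcal(Z)$ onto $Z$, yet ${\boldsymbol\lambda^\sharp}$ sends both $\{b\}$ and $\{a,b\}$ to $z_2$, so it is not an embedding --- and note that here $\lambda$ is bijective, so injectivity of $\lambda$ alone would not repair this. The paper's own argument for (1) is a one-liner (``the proof of Lemma \ref{sup-spectral} shows that $\Sigma$ is a spectral embedding whenever it is injective'', followed by the claim that composing with the canonical embedding gives that ${\boldsymbol\lambda^\sharp}$ is an embedding) and never examines the factor $\xcal(\lambda)\colon\xcal(X)\rightarrow\xcal(\lambda(X))$, which is precisely where the failure occurs; so the discrepancy you flagged is a defect of the statement and of the paper's proof rather than of your argument, and under the extra embedding hypothesis your verification (injectivity of $\xcal(\lambda)$ via order reflection and inverse-closedness, Proposition \ref{psi}(1), and $\Sigma$ injective, spectral, open onto its image) is sound.

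Your part (2) is correct and takes a genuinely different route from the paper's. The paper shows that ${\boldsymbol\Lambda}$ is sup-preserving (working with ${\boldsymbol\Lambda}(C_1\cup C_2)$ and using that the embedding ${\boldsymbol\Lambda}$ reflects the order together with the hypothesis on $Z$) and then invokes Theorem \ref{sigma}(3); you instead obtain the missing inequality ${\boldsymbol\Lambda}(K)\leq{\boldsymbol\lambda^\sharp}(K)$ directly, by applying the hypothesis to $z={\boldsymbol\Lambda}(K)$, using order reflection and Theorem \ref{embedding}(2) to get $\lambda^{-1}(\{z\}^{\mbox{\tiny\texttt{gen}}})\subseteq K$, and combining with Theorem \ref{sigma}(2). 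This is shorter and avoids Theorem \ref{sigma}(3) altogether, at the cost of nothing, since both arguments rest on the same two ingredients. One cosmetic remark: the identification $\sup_Z\lambda(K)=\sup_Z\bigl(\lambda(K)^{\mbox{\tiny\texttt{gen}}}\bigr)$ is better justified by saying the two sets have the same upper bounds (maximal elements need not exist); the paper makes the same identification, with that justification, in the proof of Theorem \ref{sigma}(2).
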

\begin{proof}
(1) The proof of Lemma \ref{sup-spectral} shows that {\large\mbox{$\Sigma$}} is a spectral embedding whenever it is injective. Since $\varphi$ is also a spectral embedding, so is {\large\mbox{$\Sigma$}}$\circ\varphi$, i.e., ${\boldsymbol\lambda^\sharp}$.

(2)  In the present situation, we claim that ${\boldsymbol\Lambda}$ is sup-preserving. Let $C_1,C_2\in\xcal(X)$, and consider ${\boldsymbol\Lambda}(C_1\cup C_2)$ (note that the order on $\xcal(X)$ is   the set-theoretic inclusion, so the union is exactly their supremum); clearly, ${\boldsymbol\Lambda}(C_1\cup C_2)$ is bigger than both ${\boldsymbol\Lambda}(C_1)$ and ${\boldsymbol\Lambda}(C_2)$, and thus also of their supremum.

Let $x$ be such that $\lambda(x)\leq {\boldsymbol\Lambda}(C_1\cup C_2)$, or equivalently such that $x\in\lambda^{-1}({\boldsymbol\Lambda}(C_1\cup C_2))$. Since $\lambda(x)={\boldsymbol\Lambda}(\{x\}^{{\mbox{\tiny{{\texttt{gen}}}}}})$,   the previous  inequality can be rewritten as ${\boldsymbol\Lambda}(\{x\}^{{\mbox{\tiny{{\texttt{gen}}}}}})$ $\leq{\boldsymbol\Lambda}(C_1\cup C_2)$.   On the other hand,  ${\boldsymbol\Lambda}$ is an embedding, i.e., it is a homeomorphism onto its image, and thus $\{x\}^{{\mbox{\tiny{{\texttt{gen}}}}}}\leq C_1\cup C_2$ in $\xcal(X)$. Hence, $x\in C_1\cup C_2$, which means $x\in C_1$ or $x\in C_2$. Therefore,
\begin{equation*}
{\boldsymbol\Lambda}(\{x\}^{{\mbox{\tiny{{\texttt{gen}}}}}})\leq\sup\{{\boldsymbol\Lambda}(C_1),{\boldsymbol\Lambda}(C_2)\}.
\end{equation*}
By hypothesis, we have
\begin{equation*}
{\boldsymbol\Lambda}(C_1\cup C_2) = \sup\{{\boldsymbol\Lambda}(\{x\}^{{\mbox{\tiny{{\texttt{gen}}}}}})\mid x\in X \mbox{ such that } \lambda(x)\leq {\boldsymbol\Lambda}(C_1\cup C_2) \}\,.
\end{equation*}
Therefore, by the previous inequality, we deduce that  ${\boldsymbol\Lambda}(C_1\cup C_2)\leq\sup\{{\boldsymbol\Lambda}(C_1),$ ${\boldsymbol\Lambda}(C_2)\}$.
As observed above, the  opposite inequality also  holds, thus we have the equality, and so ${\boldsymbol\Lambda}$ is sup-preserving.

By  Theorem \ref{sigma}(3), we conclude that ${\boldsymbol\Lambda}={\boldsymbol\lambda^\sharp}$.
\end{proof}

 \begin{oss}
In general, it is possible for a spectral map $\lambda:X\rightarrow Z$ to have more than one extension $\boldsymbol{\Lambda}:\xcal(X)\rightarrow Z$, even under the hypothesis $z=\sup_Z \{\lambda(x) \mid x \in \lambda^{-1}(\{z\}^{\mbox{\tiny{{{\texttt{gen}}}}}} )\}$  (the previous proposition merely guarantees the unicity of an extension $\boldsymbol{\Lambda}$ which is an
 {\sl embedding}).

For example, suppose $Z=\xcal(X)$, and let $\lambda=\varphi$ be the canonical inclusion of $X$ in $\xcal(X)$. Clearly, if $z\in Z= \xcal(X)$, then $A:=\lambda^{-1}(\{z\}^{\mbox{\tiny{{{\texttt{gen}}}}}})$ is composed by the elements of $X$ that belong to   $\{z\}^{\mbox{\tiny{{{\texttt{gen}}}}}}$, and thus the supremum of the set $\{\lambda(x)\mid x\in A\}$, is exactly $z$. Moreover, it is clear that the homeomorphism $\boldsymbol{\lambda^{\sharp}}:\xcal(X)\longrightarrow Z = \xcal(X)$ whose existence is guaranteed by Theorem \ref{sigma} is just the identity ${\rm id}_{\boldsymbol{\mathcal{X}}(X)}$.

On the other hand, suppose that $X=\{a,b,c\}$ is composed by three elements, and endowed with the discrete topology; that is, suppose that every subset of $X$ is open. Then, $X$ is a spectral space; denote by $\boldsymbol{\Lambda}:\xcal(X)\longrightarrow\xcal(X)$ the function defined by
\begin{equation*}
\boldsymbol{\Lambda}(C):=\begin{cases}
C & \mbox{ if } C\neq \{a,b\},\\
X & \mbox{ if } C=\{a,b\}.
\end{cases}
\end{equation*}
Then, $\boldsymbol{\Lambda}$ is order-preserving (in the order induced by the Zariski topology), and since $\xcal(X)$ is finite this implies that $\boldsymbol{\Lambda}$ is continuous and spectral. Moreover, if $C$ is in $\varphi(X)$ (i.e., if $C$ is a singleton) then $\boldsymbol{\Lambda}(C)=C$: therefore, $\boldsymbol{\Lambda}\circ\varphi=\varphi={\rm id}_{\boldsymbol{\mathcal{X}}(X)}\circ\varphi$.
\end{oss}  


\section{Applications}\label{sect:appl}

In this section, we apply the topological results of the previous sections to   various algebraic settings. In particular, in Section \ref{sect:modover} we show how the construction $\xcal$ relates   a spectral space associated to a family of modules with the space of all possible intersections of the family   and we prove that the space of all overrings of an integral domain $D$ that are integrally closed
is a spectral space and it is a topological quotient of the spectral space obtained using the construction $\xcal$ from the Riemann-Zariski space $\Zar(D)$.
In Section \ref{sect:semistar} we use $\xcal$ to represent   some distinguished spaces of semistar operations, and provide a different   general proof of some results shown in \cite{fi-fo-sp-JPAA}.

\subsection{Spaces of modules and overrings}\label{sect:modover}
Let $R$ be a ring, let $M$ be an $R$-module, and let $\smod_R(M)$ be the set of $R$-submodules of $M$. The \emph{Zariski topology} on $\smod_R(M)$ is the topology having, as a subbasis of open sets, the sets in the form
\begin{equation*}
\mathtt{B}_f:=\{N\in\smod_R(M)\mid f\in N\},
\end{equation*}
where $f$ runs in  $M$;  equivalently, the sets in the form
\begin{equation*}
\mathtt{B}_F:=\{N\in\smod_R(M)\mid F\subseteq N\},
\end{equation*}
where $F$ runs among the finite subsets of $M$. Under this topology, $\smod_R(M)$ is a spectral space  \cite[Proposition 2.1]{fi-fo-sp-MANUSCRIPTA}, and the order induced by the topology is exactly the inverse of the containment order; in particular, the supremum of a subset $ \mathscr{X}\subseteq\smod_R(M)$ is exactly the intersection of the elements of $ \mathscr{X}$. Therefore, Lemma \ref{sup-spectral} translates immediately to the following.

\begin{prop}\label{prop:intersez-submod}
Let $\mathscr{X} \subseteq\smod_R(M)$ be a subset that is closed in the constructible topology  (in particular, $  \mathscr{X}$ is a spectral space). Then, the map
\begin{equation*}
\begin{aligned}
\Sigma\colon\xcal(\mathscr{X}) & \longrightarrow\smod_R(M) \\
\Delta & \longmapsto \bigcap \{N \mid N\in\Delta \}
\end{aligned}
\end{equation*}
is well-defined,   continuous, spectral, and open onto its image.  \hfill $\Box$
\end{prop}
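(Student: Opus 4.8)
The plan is to deduce this result directly from Lemma \ref{sup-spectral}(1), since the statement is explicitly advertised as an ``immediate translation'' of that lemma. The key point is to verify that the hypotheses of Lemma \ref{sup-spectral}(1) are met in the present setting, with $Y=\mathscr{X}$ and $Z=\smod_R(M)$. Two things must be checked: first, that the map $\Sigma$ here coincides with the map $\Sigma_{\mathscr{X},\smod_R(M)}$ of the lemma, and second, that $\smod_R(M)$ satisfies the local-basis condition ``each point has a local basis consisting of sets of the form $\{\omega\}^{\mbox{\tiny\texttt{gen}}}$''.

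For the first point, I would recall that, by \cite[Proposition 2.1]{fi-fo-sp-MANUSCRIPTA}, $\smod_R(M)$ with the Zariski topology is a spectral space whose associated order $\leq$ is the reverse of set-theoretic inclusion. Consequently, for a family of submodules, the supremum in the order $\leq$ is the largest submodule contained in all of them, namely their intersection. Hence for $\Delta\in\xcal(\mathscr{X})$ one has $\sup_{\smod_R(M)}(\Delta)=\bigcap\{N\mid N\in\Delta\}$, so that the map $\Sigma$ displayed in the proposition is literally $\Sigma_{\mathscr{X},\smod_R(M)}$, and in particular it is well-defined on all of $\xcal(\mathscr{X})$: the intersection of any nonempty family of submodules of $M$ is again a submodule of $M$, so the supremum always exists in $\smod_R(M)$.

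For the second point, I would observe that for a submodule $N\in\smod_R(M)$ the generization $\{N\}^{\mbox{\tiny\texttt{gen}}}$ consists, by the description of the order, of all submodules of $M$ containing $N$; and for a finite subset $F\subseteq M$ the basic open set $\mathtt{B}_F=\{N'\mid F\subseteq N'\}$ is exactly $\{\langle F\rangle_R\}^{\mbox{\tiny\texttt{gen}}}$, where $\langle F\rangle_R$ is the submodule generated by $F$. Thus every basic open set of $\smod_R(M)$ is already of the form $\{\omega\}^{\mbox{\tiny\texttt{gen}}}$, and in particular every point has a local basis of such sets; the local-basis hypothesis of Lemma \ref{sup-spectral}(1) is therefore satisfied in the strongest possible way. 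Applying Lemma \ref{sup-spectral}(1) then yields at once that $\Sigma$ is continuous, spectral, and open onto its image. Since $\mathscr{X}$ is closed in the constructible topology of the spectral space $\smod_R(M)$, it is itself a spectral space, so $\xcal(\mathscr{X})$ is defined and the statement makes sense.

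I do not expect a genuine obstacle here: the content of the proposition is entirely contained in Lemma \ref{sup-spectral} together with the already-cited identification of the order on $\smod_R(M)$ with reverse inclusion. The only mild care needed is the bookkeeping that the basic opens $\mathtt{B}_F$ coincide with the sets $\{\omega\}^{\mbox{\tiny\texttt{gen}}}$ (so that the local-basis hypothesis holds), and the remark that the intersection of an arbitrary nonempty family of submodules is a submodule (so that $\Sigma$ is everywhere defined); both are immediate. Accordingly the proof is the two-line reduction ``$\Sigma=\Sigma_{\mathscr{X},\smod_R(M)}$, the hypotheses of Lemma \ref{sup-spectral}(1) hold, apply it,'' which is presumably why the authors end the statement with $\Box$ rather than a separate proof.
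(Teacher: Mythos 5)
Your proposal is correct and follows exactly the route the paper intends: the paper itself introduces the proposition with the remark that the order on $\smod_R(M)$ is reverse inclusion (so suprema are intersections) and states that Lemma \ref{sup-spectral} ``translates immediately'' to it, which is why the statement ends with $\Box$. Your verification that the basic opens $\mathtt{B}_F$ equal $\{\langle F\rangle_R\}^{\mbox{\tiny\texttt{gen}}}$, so the local-basis hypothesis of Lemma \ref{sup-spectral}(1) holds, is precisely the bookkeeping the paper leaves implicit.
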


Let now $D$ be an integral domain. An \emph{overring} of $D$ is an integral domain contained between $D$ and its quotient field $K$; the collection of all overrings of $D$ is denoted by $\Over(D)$. Under the Zariski topology, this space is closed in the constructible topology of $\smod_D(K)$ (this essentially follows from \cite{olb2016}); in particular, $\Over(D)$ is a spectral space and a subbase for the open sets of $\Over(D)$ is formed by the sets in the form
\begin{equation*}
\mathtt{O}_F:=\{B\in \Over(D)\mid B\supseteq F \},
\end{equation*}
where $F$ runs among the finite subsets of $K$. 

A distinguished subset of $\Over(D)$ is the \emph{Riemann-Zariski space} of $D$, i.e., the space $\zar(D)$ of all the valuation overrings of $D$. Then, $\Zar(D)$ is a closed set in the costructible topology of $\Over(D)$ (and thus of $\smod_D(K)$), and in particular it is a spectral space.

Part (2) of the following proposition can also be proved directly, with the same methods used to show that $\Over(D)$ is a spectral space; see \cite[Proposition 3.5 and 3.6]{Fi}.
\begin{prop}\label{prop:XZar}
Let $D$ be an integral domain, and let $ \mathscr{X}:= \overric(D)$ $ \subseteq\Over(D)$ be the space of overrings of $D$ that are integrally closed.
\begin{enumerate}
\item[\emph{(1)}] $ \mathscr{X}$ is a topological quotient of $\xcal(\Zar(D))$.
\item[\emph{(2)}] $ \mathscr{X}$ is closed in the constructible topology of $\Over(D)$; in particular, it is a spectral space.
\item[\emph{(3)}] If $D$ is a Pr\"ufer domain, then $\Over(D)$ is homeomorphic to $\xcal(\Zar(D))$.
\end{enumerate}
\end{prop}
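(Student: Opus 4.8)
The plan is to prove the three parts essentially by transporting the general construction $\xcal$ into the concrete setting of modules and overrings, using the map $\Sigma$ of Proposition~\ref{prop:intersez-submod} as the central tool.

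\textbf{Part (1).} I would consider the map
\begin{equation*}
\pi\colon\xcal(\Zar(D))\longrightarrow\overric(D),\qquad \Delta\longmapsto\bigcap\{V\mid V\in\Delta\}.
\end{equation*}
This is well-defined and lands in $\overric(D)$ because an intersection of valuation overrings is an integrally closed overring, and it is the restriction (corestriction) of the map $\Sigma$ of Proposition~\ref{prop:intersez-submod} applied to $\mathscr{X}=\Zar(D)$, hence continuous and spectral. The key point is surjectivity: every integrally closed overring $B$ is the intersection of the valuation overrings of $D$ containing it (the classical Krull result, $B=\bigcap\{V\in\Zar(D)\mid V\supseteq B\}$), and the set $\mathscr{V}_B:=\{V\in\Zar(D)\mid V\supseteq B\}=\zar(D)\cap\mathtt{O}_B$ is closed in the inverse topology of $\Zar(D)$ — here I would invoke Lemma~\ref{uno}(2): $\mathscr{V}_B$ is quasi-compact (it is closed in the constructible topology, being an intersection of basic constructible-clopen sets $\mathtt{O}_b$) and saturated/closed under generizations (a valuation overring of a member of $\mathscr{V}_B$ still contains $B$). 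So $\mathscr{V}_B\in\xcal(\Zar(D))$ and $\pi(\mathscr{V}_B)=B$. To upgrade surjectivity to ``topological quotient'' I would show $\pi$ is either open or closed onto $\overric(D)$: the open-onto-its-image statement in Proposition~\ref{prop:intersez-submod} does the job once we know the image is all of $\overric(D)$, since a continuous surjection that is open is a quotient map.

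\textbf{Part (2).} I would show $\overric(D)$ is closed in $\Over(D)^{\mathrm{cons}}$ (equivalently in $\smod_D(K)^{\mathrm{cons}}$, since $\Over(D)$ is already constructibly closed there). The cleanest route: $\overric(D)=\pi(\xcal(\Zar(D)))$ is the continuous image of a constructibly compact space under a map that is continuous for the constructible topologies (a spectral map is continuous for the patch topologies), hence its image is constructibly quasi-compact, and a quasi-compact subset of the Hausdorff space $\Over(D)^{\mathrm{cons}}$ is closed. Alternatively, and perhaps more in the spirit of the paper, one characterizes $B\in\overric(D)$ by the first-order-like condition ``for all $x\in K$, if $x^n+b_{n-1}x^{n-1}+\cdots+b_0=0$ with $b_i\in B$ then $x\in B$'', which cuts out a patch-closed set; but the image argument is shorter and I would use it.

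\textbf{Part (3).} When $D$ is Pr\"ufer, every overring is integrally closed (in fact a flat, Pr\"ufer overring), so $\overric(D)=\Over(D)$, and moreover every overring is already an \emph{intersection} of valuation overrings in a canonical way with the extra feature that $\pi$ becomes injective: if $\Delta_1,\Delta_2\in\xcal(\Zar(D))$ with $\bigcap\Delta_1=\bigcap\Delta_2=B$, then since for a Pr\"ufer domain the valuation overrings of $D$ containing $B$ are exactly $\Spec$-theoretically the localizations and one has $\Delta_i=\{V\in\Zar(D)\mid V\supseteq B\}$ — here the point is that a member $V$ of $\Delta_i$ satisfies $V\supseteq\bigcap\Delta_i=B$, giving $\Delta_i\subseteq\mathscr{V}_B$, while conversely any $W\in\mathscr{V}_B$ is a generization (in $\Zar(D)^{\mathrm{inv}}$, i.e.\ contains some $V\in\Delta_i$, because over a Pr\"ufer domain $W\supseteq B=\bigcap\Delta_i$ forces $W$ to contain one of the valuation overrings in the representation, by the treelike/linearly-ordered localization structure of Pr\"ufer domains) so $\mathscr{V}_B\subseteq\Delta_i$ since $\Delta_i$ is inverse-closed hence closed under generizations. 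Thus $\Delta_1=\mathscr{V}_B=\Delta_2$. So $\pi$ is a continuous spectral bijection which is open onto its image by Proposition~\ref{prop:intersez-submod}; a continuous open bijection is a homeomorphism.

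\textbf{Main obstacle.} The genuinely delicate step is the injectivity claim in (3), i.e.\ that for a Pr\"ufer domain every inverse-closed subset $\Delta\subseteq\Zar(D)$ with intersection $B$ must equal the \emph{full} fan $\{V\in\Zar(D)\mid V\supseteq B\}$. This rests on the special feature of Pr\"ufer domains that if $W$ is a valuation overring containing $B=\bigcap_{V\in\Delta}V$ then $W$ already contains some $V\in\Delta$ — in other words the representation is ``irredundant-free'' in a strong sense. I would prove this by localizing: $W=B_P$ for a prime $P$ of $B$ (valuation overrings of a Pr\"ufer domain are localizations), and $B=\bigcap_{V\in\Delta}V$ exhibits $P$ as lying under some maximal ideal structure forcing $W\supseteq V$ for the corresponding $V\in\Delta$; the key background fact is that in a Pr\"ufer domain the set of overrings containing a fixed one is itself nicely structured (a Pr\"ufer domain again, with spectrum a subspace of $\Spec D$). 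If this localization argument resists a clean write-up, the fallback is to cite the known homeomorphism $\Over(D)\cong\xcal'(\text{something})$ or the representation theory of Pr\"ufer overrings directly from the literature.
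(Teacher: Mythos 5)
Parts (1) and (2) of your proposal are correct and essentially identical to the paper's argument: the paper also uses the intersection map of Proposition \ref{prop:intersez-submod} restricted to $\xcal(\Zar(D))$, gets surjectivity from the Krull-type representation $T=\bigcap\{V\in\Zar(D)\mid V\supseteq T\}$ together with the fact that this fan is quasi-compact and closed under generizations (hence inverse-closed), and gets (2) because $\overric(D)$ is the image of a spectral (hence patch-continuous) map, so it is patch-compact in the Hausdorff patch topology of $\Over(D)$.

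The gap is in the injectivity step of part (3), which you yourself flag as the delicate point. Your key claim is that if $W\in\Zar(D)$ contains $B=\bigcap\{V\mid V\in\Delta\}$, then $W$ contains some member of $\Delta$, and you justify this purely by ``the treelike/linearly-ordered localization structure of Pr\"ufer domains,'' never using that $\Delta$ is quasi-compact (patch-closed). As justified, the claim would apply to an arbitrary family of valuation overrings with intersection $B$, and in that generality it is false: there exist one-dimensional Pr\"ufer (almost Dedekind, non-Dedekind) domains $B$ possessing a maximal ideal $M$ with $B=\bigcap\{B_N\mid N\in\Max(B),\ N\neq M\}$; for the family $\Delta_0=\{B_N\mid N\neq M\}$ the valuation overring $W=B_M$ contains $B$ but contains no member of $\Delta_0$, since $B_M\supseteq B_N$ forces $N=M$. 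So the localization argument you sketch (``$P$ lying under some maximal ideal structure forcing $W\supseteq V$'') cannot work on its own; the quasi-compactness of $\Delta$ must enter essentially. This is exactly what the paper supplies by a different route: $B$, being Pr\"ufer, is a \emph{vacant} domain, and by \cite[Corollary 4.16]{fifolo2} any $\Delta$ with $\bigcap\Delta=B$ is dense in $\Zar(B)$ for the inverse topology; since your $\Delta$ is inverse-closed, this immediately gives $\Delta=\Zar(B)$, whence injectivity. Alternatively one can fill your gap directly: if no $V\in\Delta$ is contained in $B_M$ for some $M\in\Max(B)$, then each $V\in\Delta$ contains $1/m$ for some $m\in M$; quasi-compactness of $\Delta$ yields finitely many $m_1,\dots,m_n\in M$ with $\Delta\subseteq\bigcup_i\{V\mid 1/m_i\in V\}$, and then any $z\in I^{-1}\setminus B$, where $I=(m_1,\dots,m_n)B$ is invertible because $B$ is Pr\"ufer, lies in every $V\in\Delta$ (from $zm_i\in B\subseteq V$ and $v(m_i)\leq 0$ for a suitable $i$), contradicting $\bigcap\{V\mid V\in\Delta\}=B$. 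Either of these completes your argument, but some such compactness input is genuinely missing from the proposal as written.
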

\begin{proof}
(1)  Consider the map
\begin{equation*}
\begin{aligned}
\lambda\colon\xcal(\Zar(D)) & \longrightarrow\smod_D(K) \\
Y & \longmapsto \bigcap \{V \mid V\in Y\}.
\end{aligned}
\end{equation*}
By Proposition \ref{prop:intersez-submod}, $\lambda$ is well-defined, continuous, spectral, and open onto its image. n Moreover, the image of $\lambda$ is exactly $ \mathscr{X}$: indeed, any intersection of valuation domains is an integrally closed ring, while if $T\in  \mathscr{X}$ then $T=\lambda(\Zar(T))$, and $\Zar(T)$ is an inverse-closed subset of $\overr(D)$ (being quasi-compact and closed   under generizations). Therefore, $ \mathscr{X}$ is a topological quotient of $\xcal(\Zar(D))$,   since  the map $\lambda: \xcal(\Zar(D)) \rightarrow  \mathscr{X}$ is open, continuous and surjective.

 (2) $\mathscr{X}$  is closed in the constructible topology of $\smod_D(K)$ and of $\Over(D)$,   since it is the image of the spectral map $\lambda$.

(3) Assume that $D$ is a Pr\"ufer domain.
 We claim that $\lambda$ establishes a homeomorphism between $\xcal(\Zar(D))$ and $\Over(D)$. Indeed, since $D$ is Pr\"ufer, every overring of $D$ is integrally closed \cite[Theorem 26.2]{gi}, and thus the image of $\lambda$ is exactly $\Over(D)$.

Let now $C_1,C_2\in \xcal(\Zar(D))$ be such that $R:=\bigcap\{V\mid V\in C_1 \}=\bigcap\{V\mid V\in C_2 \}$. Since $R$ is itself a Pr\"ufer domain (as an overring of a Pr\"ufer domain), it is vacant and thus, by \cite[Corollary 4.16]{fifolo2}, $C_1,C_2$ are dense subspaces of $\zar(R)$, with respect to the inverse topology of $\zar(R)$. 
Keeping in mind that $C_1,C_2\in \xcal(\Zar(D))$, it follows immediately $C_1=C_2=\zar(R)$. This proves that $\lambda$ is injective.
Therefore,  in the present situation, $ \lambda:  \xcal(\Zar(D))\rightarrow\Over(D)$ is bijective, continuous and open, and thus it is a homeomorphism.
\end{proof}

%


\subsection{Spaces of semistar operations} \label{sect:semistar}
Let $D$ be an integral domain and let $K$ be the quotient field of $D$; let $\inssubmod(D)$ be the set of $D$-submodules of $K$. A \emph{semistar operation} on $D$ is a map $\star:\inssubmod(D)\longrightarrow\inssubmod(D)$, $I\mapsto I^\star$, such that, for every $I,J\in\inssubmod(D)$, we have: ($\star_1$) $I\subseteq I^\star$; ($\star_2$) if $I\subseteq J$, then $I^\star\subseteq J^\star$; ($\star_3$) $(I^\star)^\star=I^\star$; ($\star_4$) $xI^\star=(xI)^\star$ for every $x\in K$. For the basic properties of star, semistar and closure operations  we refer to \cite{an-overrings}, \cite{an}, \cite{ep-12}, \cite{ep-15}, \cite{gi}, \cite{hk-98}, \cite{hk-01}, and \cite{ok-ma}. 

In \cite{FiSp} a natural topology, called \emph{the Zariski topology}, on the space $\inssemistar(D)$ of all the semistar operations on $D$ was defined, by declaring as a subbasis of open sets the collection of all the sets fo the type
$$
\texttt{U}_F:=\{\star\in \inssemistar(D)\mid 1\in F^\star\},
$$
where $F$ runs among nonzero $D$-submodules of $K$. 

A semistar operation $\star$ is \emph{stable} if $(I\cap J)^\star=I^\star\cap J^\star$ for all nonzero  $D$-submodules $I$ and $J$ of $K$, and is \emph{of finite type} if $I^\star=\bigcup\{J^\star\mid J\subseteq I,J\text{~finitely generated over~}D\}$ for every nonzero  $D$-submodule $I$ of $K$. By \cite[Corollary 4.4]{FiSp}, a semistar operation $\star$ is simultaneously stable and of finite type if and only if there is a quasi-compact subset $Y\subseteq\spec(D)$ such that $\star=\s_Y$, where $\s_Y$ is defined as
\begin{equation*}
I^{{\footnotesize \s}_Y}:= \bigcap \{ID_P \mid P\in Y\}.
\end{equation*}
We denote by $\inssemistabft(D)$ the set of all stable semistar operations of finite type. 

It is quite simple to show that the set  $\inssemistabft(D)$, endowed with the subspace topology induced by that of $\inssemistar(D)$, has basic open sets of the type 
\begin{equation*}
\widetilde{\mbox{\texttt U}}_J:=\{\star\in \inssemistabft(D)\mid 1\in J^\star\},
\end{equation*}
as $J$ ranges among the nonzero finitely generated  ideal of $D$ (see \cite[Proposition 4.1(1)]{fi-fo-sp-JPAA}). Under this topology, $\inssemistabft(D)$ is a spectral space \cite[Theorem 4.6]{fi-fo-sp-JPAA} that can be thought of as a natural ``extension'' of $\spec(D)$, since the canonical map $\s\colon\spec(D)  \rightarrow\inssemistabft(D) $, defined by 
$P  \mapsto \s_{\{P\}}$,
is a topological embedding. The construction $\xcal$ introduces a new way to represent $\inssemistabft(D)$.

\begin{prop}\label{prop:invX<->semistabtf}
Let $D$ be an integral domain. 
\begin{enumerate}
\item[\rm{(1)}]
The map
$\boldsymbol{\s^\sharp}\colon\xcal(D)  \rightarrow \inssemistabft(D)$, defined by
$Y   \mapsto \s_Y$, 
and the map
$\Delta \colon$ $ \inssemistabft(D)   \rightarrow\xcal(D) $, defined by
$\star \mapsto \qspec^\star(D) :=\{ P\in \spec(D) \mid P^\star \cap D$ $=P \}$,
are homeomorphisms and  are inverses of each other. 
\item[\rm{(2)}]
If $\varphi:\spec(D) \rightarrow \xcal(D)$ is defined by  $P \mapsto \{P\}^{\mbox{\tiny\emph{\texttt{gen}}}}$ and $\s: \spec(D)\rightarrow \inssemistabft(D)$ is defined by $P \mapsto \s_{\{P\}}$, then $\boldsymbol{\s^\sharp} \circ \varphi = \s$.
\end{enumerate}
\end{prop}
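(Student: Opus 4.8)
The plan is to show that $\boldsymbol{\s^\sharp}$ and $\Delta$ are mutually inverse bijections and that both are continuous (equivalently, spectral), from which the homeomorphism assertion follows. The starting point is \cite[Corollary 4.4]{FiSp}: a semistar operation is stable and of finite type exactly when it equals $\s_Y$ for some quasi-compact $Y\subseteq\spec(D)$. First I would observe that $\s_Y$ depends only on the inverse-closure of $Y$: since $ID_P\supseteq ID_Q$ whenever $P\leq Q$ in $\spec(D)$, the intersection defining $I^{\s_Y}$ is unchanged if we replace $Y$ by $Y^{\mbox{\tiny\texttt{gen}}}$, and a quasi-compact subset closed under generizations is inverse-closed (as recalled after Lemma \ref{uno}, via \cite[Proposition 2.6]{fifolo2}). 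Hence every stable semistar operation of finite type is of the form $\s_Y$ with $Y\in\xcal(D)$, so $\boldsymbol{\s^\sharp}$ is surjective. Conversely I would identify $\Delta(\s_Y)=\qspec^{\s_Y}(D)$ with $Y$ itself: the containment $Y\subseteq\qspec^{\s_Y}(D)$ is routine from the definition of $\s_Y$, and the reverse containment — that $P^{\s_Y}\cap D=P$ forces $P\in Y$ — is the standard computation identifying the quasi-spectrum of a stable finite-type operation, showing $\qspec^{\s_Y}(D)=Y$ for $Y\in\xcal(D)$; this gives $\Delta\circ\boldsymbol{\s^\sharp}=\mathrm{id}$. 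Together with surjectivity of $\boldsymbol{\s^\sharp}$ this already yields that both maps are bijections inverse to one another (in particular $\boldsymbol{\s^\sharp}\circ\Delta=\mathrm{id}$ as well).

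Next I would check continuity on basic open sets. For a nonzero finitely generated ideal $J=(x_1,\dots,x_n)D$, one has $\s_Y\in\widetilde{\mbox{\texttt U}}_J$ iff $1\in J^{\s_Y}$ iff $JD_P=D_P$ for all $P\in Y$ iff $Y\subseteq\bigcup_i\texttt{D}(x_i)$, i.e. iff $Y\in\ucal(\texttt{D}(J))$ (recalling that $\texttt{D}(J)=\bigcup_i\texttt{D}(x_i)$ is quasi-compact open in $\spec(D)$, since $J$ is finitely generated). Thus $(\boldsymbol{\s^\sharp})^{-1}(\widetilde{\mbox{\texttt U}}_J)=\ucal(J)$ is a basic open set of $\xcal(D)$, so $\boldsymbol{\s^\sharp}$ is continuous; the same equivalence read in reverse shows $\Delta^{-1}(\ucal(J))=\widetilde{\mbox{\texttt U}}_J$, so $\Delta$ is continuous. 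Since both are continuous bijections that are mutual inverses, each is a homeomorphism, proving part (1).

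For part (2) I would simply unwind the definitions: for $P\in\spec(D)$, $\varphi(P)=\{P\}^{\mbox{\tiny\texttt{gen}}}\in\xcal(D)$, and $\boldsymbol{\s^\sharp}(\varphi(P))=\s_{\{P\}^{\mbox{\tiny\texttt{gen}}}}$; but as noted in the first paragraph $\s_Y=\s_{Y^{\mbox{\tiny\texttt{gen}}}}$, so $\s_{\{P\}^{\mbox{\tiny\texttt{gen}}}}=\s_{\{P\}}=\s(P)$, giving $\boldsymbol{\s^\sharp}\circ\varphi=\s$. (Alternatively one can invoke Theorem \ref{sigma}(1) with $Z=\inssemistabft(D)$ and $\lambda=\s$, recovering $\boldsymbol{\s^\sharp}$ as $\lambda^\sharp$, but the direct verification is shorter here.)

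The main obstacle I anticipate is the precise identification $\qspec^{\s_Y}(D)=Y$ for $Y\in\xcal(D)$ — specifically the inclusion $\qspec^{\s_Y}(D)\subseteq Y$. This requires knowing that if $P$ is a prime with $P^{\s_Y}\cap D=P$ then $P$ already lies in the inverse-closed set $Y$; the subtlety is that $P^{\s_Y}=\bigcap_{Q\in Y}PD_Q$, and one must rule out that this contracts back to $P$ when $P\notin Y$. This is where the inverse-closedness of $Y$ (equivalently, its being quasi-compact and stable under generization) is genuinely used: it lets one separate $P$ from $Y$ by a principal localization and thereby produce an element of $P^{\s_Y}\cap D$ outside $P$. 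Everything else is bookkeeping with the definitions of $\s_Y$, the basic opens $\ucal(J)$ and $\widetilde{\mbox{\texttt U}}_J$, and the spectrality results already established.
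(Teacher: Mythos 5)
Your plan follows essentially the same route as the paper for everything topological: the continuity computation is the paper's own ($\boldsymbol{\s^\sharp}(Y)\in\widetilde{\mbox{\texttt U}}_J$ iff $1\in J^{\mbox{\tiny$\s$}_Y}$ iff $Y\subseteq \texttt{D}(J)$, so $(\boldsymbol{\s^\sharp})^{-1}(\widetilde{\mbox{\texttt U}}_J)=\ucal(\texttt{D}(J))$, and the same equivalence read backwards handles the other direction), and part (2) is settled exactly as in the paper, via $\s_{\{P\}}=\s_{\{P\}^{\mbox{\tiny\texttt{gen}}}}$. The genuine difference is bijectivity: the paper disposes of well-definedness and bijectivity wholesale by citing \cite[Corollary 4.4, Proposition 5.1 and Corollary 5.2]{FiSp}, whereas you keep only Corollary 4.4 and try to prove directly that $\Delta(\s_Y)=\qspec^{\mbox{\tiny$\s$}_Y}(D)=Y$ for $Y\in\xcal(D)$ --- and you leave exactly the containment $\qspec^{\mbox{\tiny$\s$}_Y}(D)\subseteq Y$ open, flagging it as your ``main obstacle.'' That step is true and is closed in two lines, though not quite by a single ``principal localization'': since $Y$ is inverse-closed, it is an intersection of quasi-compact open sets, so if $P\notin Y$ there is a finitely generated ideal $J$ with $Y\subseteq\texttt{D}(J)$ and $P\notin\texttt{D}(J)$, i.e.\ $J\subseteq P$; then for every $Q\in Y$ one has $D_Q=JD_Q\subseteq PD_Q$, hence $1\in P^{\mbox{\tiny$\s$}_Y}\cap D$, contradicting $P^{\mbox{\tiny$\s$}_Y}\cap D=P$ (the element outside $P$ is simply $1$). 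With this argument inserted (or with the paper's citations), your proof is complete; the only other small point worth making explicit is in your surjectivity step, where you should note that $Y^{\mbox{\tiny\texttt{gen}}}$ is again quasi-compact (open sets are closed under generization, so a finite subcover of $Y$ covers $Y^{\mbox{\tiny\texttt{gen}}}$), hence lies in $\xcal(D)$ by the criterion recalled after Lemma \ref{uno}. What your route buys is independence from \cite[Proposition 5.1 and Corollary 5.2]{FiSp}, at the cost of redoing a short standard computation; what the paper's route buys is brevity.
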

\begin{proof}
(1) The fact that $\boldsymbol{\s^\sharp}$ and $\Delta$ are well-defined and bijective follows from \cite[Corollary 4.4, Proposition 5.1 and Corollary 5.2]{FiSp}.

Let $\widetilde{\mbox{\texttt{U}}}_J$ be a subbasic open set of $\inssemistabft(D)$, where $J$ is a nonzero finitely generated ideal of $D$.
 Then, $\boldsymbol{\s^\sharp}(Y)\in \widetilde{\mbox{\texttt{U}}}_J$ if and only if $1\in J^{\tiny{\s}_{\!_Y}}$, that is, if and only if $Y\subseteq \texttt{D}(J)$. Thus, by definition, $\boldsymbol{\s^\sharp}^{-1}(\widetilde{\mbox{\texttt{U}}}_J)=\boldsymbol{\mathcal{U}}(\texttt{D}(J))$ is open.

Conversely, a subbasic open set of $\boldsymbol{\mathcal{X}}(D)$ has the form $\boldsymbol{\mathcal{U}}(\texttt{D}(J))$ for some nonzero finitely generated ideal $J$.	
 As above, $\boldsymbol{\s^\sharp}(\boldsymbol{\mathcal{U}}(\texttt{D}(J)))=\boldsymbol{\s^\sharp}(\boldsymbol{\s^\sharp}^{-1}(\widetilde{\mbox{\texttt{U}}}_J))= \widetilde{\mbox{\texttt{U}}}_J$, so that $\boldsymbol{\s^\sharp}$ is open. 
 Hence, $\boldsymbol{\s^\sharp}$ is a homeomorphism.
The bijective map $\Delta: \inssemistabft(D) \rightarrow \boldsymbol{\mathcal{X}}(D)$  is also a homeomorphism, since it is the inverse map of $\boldsymbol{\s^\sharp}$ which is, in particular, continuous and open.

(2)  follows from the fact that $\s_{\{P\}}$ and $\s_{\{P\}^{\mbox{\tiny{\texttt{gen}}}}}$ coincide,  for each prime ideal $P$ of $D$.
\end{proof}

\begin{cor}
Let $D$ be an integral domain. Then, $\inssemistabft(D)$ is a spectral space.
\end{cor}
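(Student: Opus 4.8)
The statement is an immediate consequence of the preceding results, so the plan is simply to assemble them. First I would recall that $\spec(D)$, endowed with the Zariski topology, is a spectral space by Hochster's theorem \cite{ho}; hence $\xcal(D) = \xcal(\spec(D))$ is defined. Applying Theorem \ref{embedding}(1) to the spectral space $X = \spec(D)$, we obtain that $\xcal(D)$, endowed with its Zariski topology, is again a spectral space.

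Next, invoke Proposition \ref{prop:invX<->semistabtf}(1): the map $\boldsymbol{\s^\sharp}\colon\xcal(D)\rightarrow\inssemistabft(D)$ is a homeomorphism (with inverse $\Delta$). Since being a spectral space is a purely topological property, preserved under homeomorphism, it follows that $\inssemistabft(D)$ is a spectral space.

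There is no real obstacle here — all the substantive work has already been carried out in establishing the homeomorphism of Proposition \ref{prop:invX<->semistabtf} and in the general Theorem \ref{embedding}(1); the corollary merely records the transfer of spectrality along that homeomorphism. (One could equally phrase the argument as: $\inssemistabft(D)$ is homeomorphic to $\xcal(\spec(D))$, and the latter is spectral because $\xcal$ sends spectral spaces to spectral spaces.)
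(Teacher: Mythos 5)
Your argument is correct and is exactly the paper's own proof: the corollary follows immediately by combining the homeomorphism $\boldsymbol{\s^\sharp}\colon\xcal(D)\rightarrow\inssemistabft(D)$ of Proposition \ref{prop:invX<->semistabtf} with the spectrality of $\xcal(\spec(D))$ from Theorem \ref{embedding}(1). Nothing is missing.
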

\begin{proof}
Immediate from Propositions \ref{prop:invX<->semistabtf} and \ref{embedding}.
\end{proof}

\begin{cor}\label{cor:xcal->specOmef}
Let $D_1,D_2$ be two integral domains. Then, $\spec(D_1)$ and $\spec(D_2)$ are homeomorphic if and only if so are $\inssemistabft(D_1)$ and $\inssemistabft(D_2)$.
\end{cor}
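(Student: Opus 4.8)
The plan is to deduce the statement directly by combining two results already available: the homeomorphism $\inssemistabft(D)\cong\xcal(\spec(D))$ established in Proposition \ref{prop:invX<->semistabtf}, and the rigidity of the construction $\xcal$ recorded in Proposition \ref{psi}(3).

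First I would invoke Proposition \ref{prop:invX<->semistabtf}(1): for each $i\in\{1,2\}$, the maps $\boldsymbol{\s^\sharp}$ and $\Delta$ are mutually inverse homeomorphisms between $\inssemistabft(D_i)$ and $\xcal(D_i)=\xcal(\spec(D_i))$. Consequently, $\inssemistabft(D_1)$ and $\inssemistabft(D_2)$ are homeomorphic if and only if $\xcal(\spec(D_1))$ and $\xcal(\spec(D_2))$ are homeomorphic.

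Next I would apply Proposition \ref{psi}(3) with $X_1=\spec(D_1)$ and $X_2=\spec(D_2)$ (both are spectral spaces by Hochster's theorem, so the hypotheses are satisfied): it asserts exactly that $\spec(D_1)$ and $\spec(D_2)$ are homeomorphic if and only if $\xcal(\spec(D_1))$ and $\xcal(\spec(D_2))$ are homeomorphic. Chaining the two equivalences yields the claim.

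There is essentially no obstacle here; the only mild point is to be explicit about which spectral spaces Proposition \ref{psi}(3) is being applied to. If one prefers a self-contained chase rather than citing part (3), one can argue that a homeomorphism $\spec(D_1)\to\spec(D_2)$ produces, via Proposition \ref{psi}(1), a homeomorphism $\xcal(\spec(D_1))\to\xcal(\spec(D_2))$, hence a homeomorphism $\inssemistabft(D_1)\to\inssemistabft(D_2)$ after composing with $\boldsymbol{\s^\sharp}$ and $\Delta$; conversely, a homeomorphism $\inssemistabft(D_1)\to\inssemistabft(D_2)$ transports, again through $\boldsymbol{\s^\sharp}$ and $\Delta$, to a homeomorphism $\xcal(\spec(D_1))\to\xcal(\spec(D_2))$, and then Proposition \ref{psi}(2) supplies the desired homeomorphism $\spec(D_1)\to\spec(D_2)$.
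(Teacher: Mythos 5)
Your proposal is correct and matches the paper's own proof, which likewise combines Proposition \ref{prop:invX<->semistabtf} with Proposition \ref{psi}(3). The extra self-contained chase via parts (1) and (2) of Proposition \ref{psi} is fine but unnecessary.
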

\begin{proof}
By Proposition \ref{prop:invX<->semistabtf}, $\inssemistabft(D_i)\simeq\xcal(\spec(D_i))$ for $i=1,2$. The claim now follows from Proposition \ref{psi}(3).
\end{proof}
Using a classical terminology which goes back to W. Krull (and later adjusted by R. Gilmer), a semistar operation $\star$ on $D$ is called   {\it endlich arithmetisch brauchbar}, for short \texttt{e.a.b.},   if, given finitely generated nonzero $D$-submodules $F,G,   H$ of $K$, then $(FG)^\star\subseteq (FH)^\star$ implies $G^\star\subseteq H^\star$.
Note that any $Y\subseteq \zar(D)$ induces a semistar operation $\wedge_Y$ on $D$ defined by $E^{\wedge_Y}:=\bigcap \{EV\mid V \in Y\}$, for each $E\in \FF(D)$, and a semistar operation of type  $\wedge_Y$ is  \texttt{e.a.b.} \cite[Proposition 7]{folo}. 
We denote by $\insfineab(D)$ the set of all the semistar operations that are at the same time \texttt{e.a.b} and of finite type, endowed with the subspace Zariski topology induced by that of $\inssemistar(D)$.

\begin{teor}\label{fin-eab}
Let $D$ be an integral domain. Then, the map 
$$\epsilon: \xcal(\zar(D))\rightarrow \emph{\insfineab}(D)\,,$$
defined by  $\epsilon(Y):=\wedge_Y$, for each $Y \in \xcal(\zar(D))$,  is a homeomorphism. 
 \end{teor}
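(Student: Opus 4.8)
The plan is to exhibit $\epsilon$ as the composition of the homeomorphism $\xcal(\zar(D))\to\insfineab(D)$ with already-available pieces, mirroring the strategy used for $\inssemistabft(D)$ in Proposition \ref{prop:invX<->semistabtf}. First I would recall the two facts that do all the heavy lifting: by \cite{fifolo2} (and as noted in the Introduction), a subset of $\zar(D)$ is inverse-closed precisely when it is quasi-compact and closed under generizations, and the assignment $Y\mapsto\wedge_Y$ is constant on ``inverse-closure classes'' — two subsets $Y,Y'$ of $\zar(D)$ give the same e.a.b.\ semistar operation of finite type iff $\chiusinv(Y)=\chiusinv(Y')$. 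Together with the fact that every finite-type e.a.b.\ operation on $D$ is of the form $\wedge_Y$ for some $Y\subseteq\zar(D)$ (this is the finite-type analogue of Krull's classical result, used in \cite{fifolo2}), these give immediately that $\epsilon$ is a well-defined bijection from $\xcal(\zar(D))$ onto $\insfineab(D)$: well-defined and injective because passing to $\chiusinv$ does not change $\wedge_Y$ and every element of $\xcal(\zar(D))$ is its own inverse-closure, surjective by the representation theorem.

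Next I would check continuity and openness on the canonical (sub)bases, exactly as in the proof of Proposition \ref{prop:invX<->semistabtf}. A subbasic open set of $\insfineab(D)$ is $\texttt{U}_F\cap\insfineab(D)=\{\star\mid 1\in F^\star\}$ for $F$ a nonzero finitely generated $D$-submodule of $K$. For $Y\in\xcal(\zar(D))$ one has $\wedge_Y\in\texttt{U}_F$ iff $1\in F^{\wedge_Y}=\bigcap\{FV\mid V\in Y\}$, i.e.\ iff $F^{-1}\ni 1$ in every $V\in Y$, which unwinds to $Y\subseteq\{V\in\zar(D)\mid 1\in FV\}$; and $\{V\in\zar(D)\mid 1\in FV\}$ is a quasi-compact open subspace $\Omega_F$ of $\zar(D)$ (it is one of the standard basic opens of the Riemann--Zariski space, cf.\ the subbasis $\mathtt O_F$ of $\Over(D)$ restricted to $\zar(D)$). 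Hence $\epsilon^{-1}(\texttt{U}_F)=\ucal(\Omega_F)$ is open in $\xcal(\zar(D))$, so $\epsilon$ is continuous. Running the computation backwards, every basic open $\ucal(\Omega)$ of $\xcal(\zar(D))$ — $\Omega$ a quasi-compact open of $\zar(D)$ — arises this way (write $\Omega$ as a finite union of sets $\Omega_{F_i}$ and use $\ucal(\Omega_1\cup\cdots)=\ucal(\Omega_1)\cup\cdots$ when the pieces are themselves such sets, or simply invoke $\epsilon(\ucal(\Omega_F))=\epsilon(\epsilon^{-1}(\texttt{U}_F))=\texttt{U}_F\cap\insfineab(D)$), so $\epsilon$ is open. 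A continuous open bijection is a homeomorphism, which is the assertion.

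An alternative, more conceptual route is to apply Theorem \ref{sigma}: take $X=\spec(D)$ or rather $X=\zar(D)$, take $Z=\inssemistar(D)$, and let $\lambda\colon\zar(D)\to\inssemistar(D)$ send $V\mapsto\wedge_{\{V\}}$; then check $\lambda$ is a spectral map, that the supremum map $\Sigma$ on $\xcal(\lambda(\zar(D)))$ is well-defined, spectral, and injective (injectivity is exactly the statement that $\wedge_Y$ determines $\chiusinv(Y)$), and identify $\boldsymbol\lambda^\sharp$ with $\epsilon$. I would probably present the direct proof as the main argument and mention this as a remark, since the hands-on computation with $\Omega_F$ is short and self-contained.

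The main obstacle, and the one place where I would need to cite the semistar literature carefully rather than reprove anything, is the bijectivity — concretely, the two inputs (i) every finite-type e.a.b.\ operation is $\wedge_Y$ for some $Y\subseteq\zar(D)$, and (ii) $\wedge_Y=\wedge_{Y'}\iff\chiusinv(Y)=\chiusinv(Y')$. These are the e.a.b.\ analogues of \cite[Corollary 4.4]{FiSp} for stable operations and are established in \cite{fifolo2}; the rest of the proof is the routine base-level bookkeeping sketched above. The only mild subtlety in that bookkeeping is making sure the sets $\Omega_F=\{V\in\zar(D):1\in FV\}$ really are quasi-compact open in $\zar(D)$ — this is standard for the Zariski topology on Riemann--Zariski spaces but should be stated, since it is what makes $\ucal(\Omega_F)$ a legitimate basic open of $\xcal(\zar(D))$.
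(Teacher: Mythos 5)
Your continuity half and your bijectivity-by-citation are sound, and your route is genuinely different from the paper's (which transports everything to $\overr(R)$ for the B\'ezout Kronecker function ring $R=\kr(\zar(D))$ via Proposition \ref{prop:XZar}(3) and then computes with polynomial contents from \cite{folo-2001}). The gap is in the openness half. First, the identity $\ucal(\Omega_1\cup\Omega_2)=\ucal(\Omega_1)\cup\ucal(\Omega_2)$ is false: $Y\subseteq\Omega_1\cup\Omega_2$ does not force $Y\subseteq\Omega_1$ or $Y\subseteq\Omega_2$. Second, the claim that every quasi-compact open $\Omega$ of $\zar(D)$ can be written as a finite union of sets $\Omega_F:=\{V\mid 1\in FV\}$ is unjustified and far from obvious: since $\Omega_F\cup\Omega_G=\Omega_{F+G}$, the family $\{\Omega_F\}$ is closed under finite unions, so your claim amounts to saying that \emph{every} quasi-compact open of $\zar(D)$ is itself of the form $\Omega_F$ --- each $\Omega_F$ is a finite union of ``principal'' opens $\{V\mid g\in V\}$, whereas a general basic open $\{V\mid x_1,\dots,x_n\in V\}$ is a finite \emph{intersection} of such sets, and collapsing that intersection to a single $\Omega_F$ is exactly the kind of nontrivial statement the paper's Kronecker-ring computation is designed to circumvent. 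Third, the fallback ``simply invoke $\epsilon(\ucal(\Omega_F))=\texttt{U}_F\cap\insfineab(D)$'' only controls the images of the sets $\ucal(\Omega_F)$, which you have not shown to be a basis; openness of images on a subfamily that is merely a subbasis does not give openness of the map without a further argument. (A small side remark: $\Omega_F$ is not $\mathtt{O}_F$ restricted to $\zar(D)$, which is $\{V\mid F\subseteq V\}$; it is the union $\bigcup_i\{V\mid f_i^{-1}\in V\}$ for $F=(f_0,\dots,f_n)D$ --- still quasi-compact open, but for this reason.)

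The argument can be repaired along your lines, and then it is simpler and more self-contained than the paper's. Every basic open of $\zar(D)$ is $\{V\mid x_1,\dots,x_n\in V\}=\bigcap_k\Omega_{x_k^{-1}D}$, so by distributing unions over intersections (and using $\Omega_F\cup\Omega_G=\Omega_{F+G}$) every quasi-compact open $\Omega$ of $\zar(D)$ is a \emph{finite intersection} of sets $\Omega_{F_1},\dots,\Omega_{F_r}$. Since $\ucal(-)$ commutes with finite intersections, $\ucal(\Omega)=\bigcap_{c}\ucal(\Omega_{F_c})$, and since $\epsilon$ is a bijection its images commute with intersections as well, so $\epsilon(\ucal(\Omega))=\bigcap_c\bigl(\texttt{U}_{F_c}\cap\insfineab(D)\bigr)$ is open; equivalently, for a bijection it suffices to check openness on the subbasis $\{\ucal(\Omega_F)\}$. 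With that substitution your direct proof goes through, resting on the facts you import from \cite{fifolo2} (finite-typeness of $\wedge_Y$ for quasi-compact $Y$, the representation of finite-type \texttt{e.a.b.} operations, and $\wedge_{Y}=\wedge_{Y'}$ iff the inverse closures agree), which are the same inputs the paper uses (its Theorem 4.13 and Corollary 4.17, supplemented by \cite[Remark 3.5(b)]{folo-2001}); the paper's detour through $\kr(Y)$ exists precisely because there the B\'ezout property makes every quasi-compact open principal and hence makes the openness computation explicit.
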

\begin{proof}
Let $K$ be the quotient field of $D$, let $V$ be a valuation overring of $D$, and let $\mathfrak m_V$ be the maximal ideal of $V$. Then, the localization $V(T):=V[T]_{\mathfrak m_V[T]}$ of the polynomial ring $V[T]$ is a valuation domain of $K(T)$, called \emph{the trivial extension of $V$ to $K(T)$} \cite[Proposition 18.7]{gi}.
 For any nonempty subspace $Y$ of $\zar(D)$, consider the following subring of $K(T)$
$$
\kr(Y):=\bigcap \{V(T) \mid V \in Y\},
$$
\cite{hk-03}, \cite{hk-15}.
 In particular, set $R:=\kr(\Zar(D))$. Then, $R$ (like the $\kr(Y)$) is a B\'ezout domain with quotient field $K(T)$ \cite[Theorems 5.1 and 3.11(3)]{folo-2001}, such that $\Zar(R)$ consists of the   trivial  extensions of the valuation domains in $\zar(D)$ \cite[Propositions 3.2(2,5), 3.3 and Corollary 3.6(2)]{fifolo2}; in particular, $\Zar(R)$ is homeomorphic to $\Zar(D)$, and thus (by Proposition \ref{psi}(1)) $\xcal(\Zar(R))\simeq\xcal(\Zar(D))$. By Proposition \ref{prop:XZar}, the map  $\lambda:\xcal(\Zar(R))\longrightarrow\overr(R)$, defined by $\lambda(Z):= \bigcap\{V\mid V\in Z\}$, for each $Z \in \xcal(\Zar(D))$,  is a homeomorphism. Therefore, every overring of $R$ is in the form $\Kr(Y)$  for a unique closed set  $Y\subseteq\Zar(D)$, with respect to the inverse topology, and thus the claim will follow if we prove that the map
$
\epsilon_0\colon\overr(R)   \rightarrow \insfineab(D)$, defined by setting
$\epsilon_0(\Kr(Y)):= \wedge_Y
$, for each $ Y\in\xcal(\zar(D))$, 
is a homeomorphism.

 By \cite[Corollary 4.17]{fifolo2},  $\epsilon_0$ is clearly well-defined; it is also injective by \cite[Remark 3.5(b)]{folo-2001}. If now $\star\in\insfineab(D)$, there must be a  quasi-compact  subspace $Y$ of $\Zar(R)$ such that $\star=\wedge_Y$ \cite[Theorem 4.13]{fifolo2}, and thus $\star=\epsilon_0(\Kr(Y^{\mbox{\tiny\texttt{gen}}}))$. Hence, $\epsilon_0$ is bijective.

To show that $\epsilon_0$ is continuous, take a nonzero finitely-generated fractional ideal $F=(f_0,f_1,\ldots,f_n)D$ of $D$, and let $\texttt{V}_F=\texttt{U}_F\cap\insfineab(D)$. By \cite[Corollary 3.4(3) and Theorem 3.11(2)]{folo-2001}, we have $F^{\wedge_Y}=f\kr(Y)\cap K$, where $f$ is the polynomial $f_0+f_1T+\cdots+f_nT^n$; therefore,
\begin{equation*}
\begin{array}{rl}
\epsilon_0^{-1}(\texttt{V}_F)=& \hskip -4pt \{A\in\overr(R)\mid 1\in FA\cap K\} = \\
=& \hskip -4pt  \{A\in\overr(R)\mid f^{-1}\in\kr(Y)\}=\texttt{O}_{f^{-1}},
\end{array}
\end{equation*}
which is, by definition, an open set of $\overr(R)$.

Let now $\texttt{O}_G$ be a subbasic open set of $\overr(R)$, where $G$ is a  nonzero  finite subset of $K(T)$. Since $R$ is a B\'ezout domain, $GR=\gamma R$ for some $\gamma   \in K(T)$; therefore, $\texttt{O}_G=\texttt{O}_\gamma$.   Let  $\alpha,\beta\in K[T]$ be  two nonzero polynomials such that $\gamma=\alpha/\beta$; then,
\begin{equation*}
\begin{array}{rl}
\epsilon_0(\texttt{O}_{\alpha/\beta})=& \hskip -4pt \{\wedge_Y\in\insfineab(R)\mid \alpha/\beta\in\kr(Y)\} =\\
=& \hskip -4pt \{\wedge_Y\in\insfineab(R)\mid \alpha\in\beta\kr(Y)\}.
\end{array}
\end{equation*}
With the same reasoning as above, there are $b_0,b_1,\ldots,b_m, \ a_0,a_1,\ldots,a_k$ $\in K$ such that 
$$
\beta\kr(Y)=(b_0,b_1,\ldots,b_m)\kr(Y) \; \mbox{   and  } \; \alpha\kr(Y)=(a_0, a_1,\ldots,a_k)\kr(Y);
$$
 therefore,
\begin{equation*}
\begin{array}{rcl}
\epsilon_0(\texttt{O}_{\alpha/\beta}) \hskip -8 pt &  = & \hskip -7 pt \{\wedge_Y\!\in\insfineab(R)\!\mid\! a_0,a_1,\ldots,a_n\in\beta\kr(Y)\}=\\
& = &\hskip -7 pt \{\wedge_Y\!\in\insfineab(R)\!\mid\! a_0,a_1,\ldots,a_n\in(b_0,\ldots,b_m)\kr(Y)\!\cap\! K\}\!=\\
& = &\hskip -7 pt\{\wedge_Y\!\in\insfineab(R)\!\mid\! a_0,a_1,\ldots,a_n\in(b_0,b_1,\ldots,b_m)^{\wedge_Y}\}=\\
& = &\hskip -7 pt \bigcap_{i=0}^n\{\wedge_Y\!\in\insfineab(R)\!\mid\! a_i\in(b_0,b_1,\ldots,b_m)^{\wedge_Y}\}.
\end{array}
\end{equation*}
The sets in the last line are each equal to $\texttt{V}_{a_i^{-1}(b_0\ldots,b_m)}$; in particular, $\epsilon_0(\texttt{O}_{\alpha/\beta})$ is an intersection of a finite number of open sets, and thus open. It follows that $\epsilon_0$ is open and thus a homeomorphism, as claimed.
\end{proof}




\end{document}